\pgfplotsset{compat=1.15}
\tikzset{
    >=stealth,
    every picture/.style={thick},
    graphs/every graph/.style={empty nodes},
}
\tikzstyle{vertex}=[
\tikzstyle{printersafe}=[decoration={snake,amplitude=0pt}]
\newcommand{\rank}{\operatorname{rank}}
\newcommand{\supp}{\operatorname{supp}}
\newcommand{\pp}{\mathbb{P}}
\newcommand{\qq}{\mathbb{Q}}
\newcommand{\zz}{\mathbb{Z}}
\newcommand{\rr}{\mathbb{R}}
\newcommand{\cc}{\mathbb{C}}
\definecolor{uuuuuu}{rgb}{0.26666666666666666,0.26666666666666666,0.26666666666666666}
  \newtheorem{introthm}{Theorem}
  \newtheorem{theorem}{Theorem}[section]
  \newtheorem{lemma}[theorem]{Lemma}
  \newtheorem{proposition}[theorem]{Proposition}
  \newtheorem{corollary}[theorem]{Corollary}
  \newtheorem{notation}[theorem]{Notation}
  \newtheorem{definition}[theorem]{Definition}
  \newtheorem{example}[theorem]{Example}
  \newtheorem{question}[theorem]{Question}
\newtheorem{remark}[theorem]{Remark}
\theoremstyle{remark}
\numberwithin{equation}{section}
\keywords{Fundamental groups, log canonical singularities, toric varieties, polyhedral complexes.}
\subjclass[2020]{Primary 14E30, 14F35; Secondary 90C57, 14M25, 20F34}
\begin{document}

\title[Fundamental group of low-dimensional lc singularities]{Fundamental groups of low-dimensional lc singularities}

\author[F.Figueroa]{Fernando Figueroa}
\address{Department of Mathematics, Princeton University, Fine Hall, Washington Road, Princeton, NJ 08544-1000, USA
}
\email{fzamora@princeton.edu}

\author[J.~Moraga]{Joaqu\'in Moraga}
\address{UCLA Mathematics Department, Box 951555, Los Angeles, CA 90095-1555, USA
}
\email{jmoraga@math.ucla.edu}

\maketitle

\begin{abstract}
In this article, we study the fundamental groups of low-dimensional log canonical singularities, i.e.,
log canonical singularities of dimension at most $4$.
In dimension $2$, we show that the fundamental group of an lc singularity is a finite extension
of a solvable group of length at most $2$.
In dimension $3$, we show that every surface group appears as the fundamental group of a $3$-fold log canonical singularity.
In contrast, we show that for 
$r\geq 2$ the free group $F_r$ is not the fundamental group of a $3$-dimensional lc singularity.
In dimension $4$, we show that the fundamental group
of any $3$-manifold smoothly embedded in $\rr^4$ is the fundamental group of an lc singularity.
In particular, every free group is the fundamental group of a log canonical singularity of dimension $4$.
In order to prove the existence results, we introduce and study a special kind of polyhedral complexes: the smooth polyhedral complexes. 
We prove that the fundamental group 
of a smooth polyhedral complex of dimension $n$ appears as the fundamental group of a log canonical singularity of dimension $n+1$.
Given a $3$-manifold $M$ smoothly embedded in $\rr^4$,
we show the existence of a smooth polyhedral complex 
of dimension $3$ that is homotopic to $M$.
To do so, we start from a complex homotopic to $M$ and perform combinatorial modifications that mimic the resolution of singularities in algebraic geometry.
\end{abstract}

\setcounter{tocdepth}{1}
\tableofcontents

\section{Introduction}

The study
of singularities
is fundamental in algebraic geometry.
An approach that dates back
to the foundations
of algebraic geometry 
is to study 
the local topological structure of a singularity.
Over the complex numbers, it is known that the topology
of a sufficiently small punctured neighborhood of the algebraic singularity stabilizes (see, e.g.,~\cite{Mil68,Dur83}).
Thus, we can talk about the {\em local fundamental group} $\pi_1^{\rm loc}(X;x)$
of a singularity $(X;x)$.
By abuse of language, this is often called
the {\em fundamental group of the singularity}.
This fundamental group can allow us
to understand geometric information of the singularity.
For instance, in~\cite{Mum61}, Mumford proved that the smoothness of a normal surface singularity
is characterized by the triviality of its local fundamental group.
In \cite{Mil68}, Milnor proved that this does not hold for $3$-fold hypersurface singularities.
Furthermore, in~\cite{Gro68}, Grothendieck showed that the local fundamental group of a
local complete intersection singularity of dimension at least $3$ is trivial.

Note that the local fundamental group of an algebraic singularity 
is a finitely presented group.
Indeed, an algebraic singularity carries
the structure of a CW complex.
On the other hand, Koll\'ar and Kapovich 
showed that any finitely presented group
appears as the fundamental group
of a normal isolated $3$-fold singularity~\cite{KK14}.
This statement is not true for surface singularities due to the work of Mumford~\cite{Mum61}.
Koll\'ar and Kapovich also described the fundamental groups
of rational singularities in~\cite{KK10}
and of Cohen-Macaulay singularities in~\cite{Kol13b}.
The fundamental groups of these singularities
are closely related to
$\qq$-super-perfect groups.
Both classes of singularities: rational and Cohen-Macaulay have been a central topic in algebraic geometry for over fifty years.

Starting in the early 90's, with the development of birational geometry
and the minimal model program, 
the singularities of the MMP
attracted a lot of attention.
The singularities of the MMP
are defined by an invariant
called {\em minimal log discrepancy}.
Whenever this invariant is positive, we say that the singularity
is {\em log terminal} (also called {\em Kawamata log terminal} or simply {\em klt}).
If this invariant is non-negative, we say that the 
singularity is {\em log canonical}.
Log terminal singularities are the local analog of Fano varieties
while log canonical singularities are the local analog of Calabi--Yau varieties.
In this article, we study the local fundamental group
of log canonical singularities of dimension at most $4$.

\subsection{Log terminal singularities} 
Before turning to the main topic of this article, we recall what is known about the fundamental groups of klt singularities.
The following theorem gives a characterization of the fundamental groups of klt singularities (see~\cite[Theorem 7]{Mor21}).

\begin{introthm}\label{introthm:fund-group-klt}
Let $n\geq 2$ and $r$ be two positive integers. 
There exists a positive integer $c(n)$ only depending on $n$ satisfying the following.
Let $(X;x)$ be an $n$-dimensional klt singularity of regularity $r$.
Then, there exists a short exact sequence
\[
1\rightarrow A \rightarrow \pi_1^{\rm loc}(X;x)
\rightarrow N\rightarrow 1,
\]
where $A$ is a finite abelian group of rank at most $r+1$ and $N$ is a finite group of order at most $c(n)$.
\end{introthm}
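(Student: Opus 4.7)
The plan is to combine a finiteness theorem for klt fundamental groups with a dlt modification argument and boundedness of klt Fano pairs. My first step is to invoke the finiteness of $\pi_1^{\rm loc}(X;x)$ for klt singularities (Braun's theorem and its refinements), which reduces the task to extracting a canonical normal abelian subgroup $A$ with controlled rank and bounding the order of the quotient $N$ uniformly in the dimension.

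To pin down $A$, I would pass to a dlt modification $f \colon Y \to X$ of $(X;x)$ with reduced exceptional boundary capturing the regularity invariant: the associated dual complex $\mathcal{D}(f)$ has dimension $\dim X - r - 1$, and the combinatorics of $\mathcal{D}(f)$ controls the ``toric-like'' part of the local topology near $x$. The natural candidate for $A$ is the Galois group of the maximal Galois cover of a link of $x$ that is étale along the generic points of the strata of $\mathcal{D}(f)$ (the cover is abelian because it is built from the cycle/character lattice of $\mathcal{D}(f)$). A rank count based on $\dim \mathcal{D}(f)$ and $H_1(\mathcal{D}(f))$, together with the non-abelian nature of the kernel, should yield $\rank A \leq r+1$.

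After quotienting by $A$, what remains is the local fundamental group of a klt singularity of ``toric complexity'' zero, whose analysis reduces to the orbifold fundamental group of a Kollár component $E$ extracted by a plt blow-up; by Birkar's boundedness of klt Fano pairs together with a Jordan-type theorem for their fundamental groups (in the spirit of Prokhorov--Shramov), the order of $N$ is bounded by a constant $c(n)$ depending only on $\dim X = n$. The main obstacle I expect is the canonical identification of the abelian subgroup $A$: the dlt modification is not unique, and showing that the subgroup produced is intrinsic (or at least that its rank bound is independent of the choice) requires tracking how crepant modifications act on the dual complex and the associated cover. Reconciling this combinatorial construction with the algebraic extraction of Kollár components is the delicate bridge between the abelian part and the bounded part of the exact sequence.
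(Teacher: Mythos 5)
This statement is not proved in the paper at all: it is quoted verbatim from \cite[Theorem 7]{Mor21} and attributed to the combined work of \cite{Xu14,TX17,Bra21,Mor20b,Mor20c,BFMS20}, so there is no in-paper argument to measure your sketch against. Judged against the known proof in that literature, your high-level architecture (finiteness via Braun, an abelian normal subgroup tied to the regularity, a quotient bounded by a Jordan-type constant $c(n)$) is the right shape, but the central step is not carried out and, as proposed, would not work.

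The gap is the construction of $A$. You define $A$ as the Galois group of a maximal cover of the link that is \'etale along the strata of $\mathcal{D}(f)$ and assert it is abelian ``because it is built from the cycle/character lattice of $\mathcal{D}(f)$''; neither the abelianness nor the bound $\rank A\leq r+1$ follows from this, and the subgroup so defined need not be normal or intrinsic, as you yourself flag. (You also misstate $\dim\mathcal{D}(f)$ as $\dim X-r-1$; by the paper's Definition~\ref{def:reg} the dual complex has dimension $r$ and $\dim X-r-1$ is the \emph{co}regularity.) The actual argument runs in the opposite order and the opposite logical direction: one first applies Braun's finiteness and realizes the whole finite group $\pi_1^{\rm loc}(X;x)$ as acting on a bounded Fano-type variety (a Kollár component of the universal cover), so the Jordan property of Prokhorov--Shramov (resting on BAB) produces a normal \emph{abelian} subgroup $A$ of index at most $c(n)$; only afterwards does one bound $\rank A$, by showing that a klt singularity admitting a finite abelian symmetry of rank $k$ degenerates to a $T$-variety of complexity $n-k$ and hence has regularity at least $k-1$, giving $k\leq r+1$. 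Your step ``after quotienting by $A$, what remains is the local fundamental group of a klt singularity of toric complexity zero'' is also a category error: the quotient group $N$ is not a priori the fundamental group of any singularity, so one cannot iterate the singularity-theoretic analysis on it; the bound on $|N|$ must come from Jordan applied to the group action, not from a second round of local topology.
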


The integer $r$, that is contained in $\{0,\dots,n-1\}$, measures the combinatorial complexity of the resolution of $(X;x)$ (see Definition~\ref{def:reg}).
The previous statement was obtained due to the work of many mathematicians~\cite{Xu14,TX17, Bra21,Mor20b,Mor20c,BFMS20}.
In a few words, the previous theorem says that the fundamental group of a log terminal singularity
behaves as the fundamental group of an orbifold singularity.

Log canonical singularities are somehow a limiting case of log terminal singularities.
Thus, a naive expectation is that the fundamental groups of lc singularities behave similarly to the fundamental group
of klt singularities.
However, the fundamental groups of log canonical singularities are still far from being understood.
Below, we summarize our results regarding lc singularities of dimension at most $4$.

\subsection{Two-dimensional log canonical singularities} In the case of dimension $2$, we can use the techniques developed by Mumford to understand the local fundamental groups.
These techniques depend on a resolution of the normal surface singularity.
These resolutions have been characterized by the work of Alexeev~\cite{Ale93}. 
Unlike klt singularities, the fundamental group of an lc singularity can be infinite starting in dimension $2$. See Example~\ref{ex:cone-over-smooth-CY}.

In the case of dimension two, we work with pairs $(X,B;x)$ and study a cover of $(X;x)$ that may ramify along $B$.
This leads to the notion of {\em regional fundamental group} denoted by $\pi_1^{\rm reg}(X,B;x)$ (see Definition~\ref{def:reg-fun}).
The fundamental group does depend on the boundary.
For instance, we have that 
\[
\pi_1\left(\mathbb{A}^2,\frac{1}{2}L_1+\frac{1}{2}L_2;(0,0) \right)\simeq \zz_2^2,
\]
where $L_1$ and $L_2$ are two transversal lines through $(0,0)$.
Our first result is an upper bound for the number of generators and relations
of the regional fundamental group of an lc surface singularity.

\begin{introthm}\label{introthm:surf}
Let $(X,B;x)$ be a log canonical surface singularity.
Then, $\pi_1^{\rm reg}(X,B;x)$ 
admits a presentation with at most 4 generators
and 
at most 7 relations.
Furthermore, if $\pi_1^{\rm reg}(X,B;x)$ admits a minimal presentation with 4 generators and 7 relations, then 
$X$ is toric at $x$ and $B$ 
has 4 components with coefficient $\frac{1}{2}$
through the singularity $x\in X$.
\end{introthm}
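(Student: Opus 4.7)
My plan is to compute $\pi_1^{\mathrm{reg}}(X,B;x)$ from a log resolution via a Mumford-type presentation, and then run the resulting bound through Alexeev's classification of log canonical surface pairs.

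\emph{Step 1 (Mumford presentation).} Let $\pi\colon Y\to X$ be a log resolution of $(X,B;x)$ with exceptional divisors $E_1,\dots,E_k$, and let $B_1,\dots,B_\ell$ denote the strict transforms of the components of $B$ meeting $\pi^{-1}(x)$. A sufficiently small punctured neighborhood of $x$ deformation retracts onto a plumbed $3$-manifold $M_\Gamma$ built from the weighted dual graph $\Gamma$ of $\pi^{-1}(x)\cup B_Y$, and $\pi_1^{\mathrm{reg}}(X,B;x)$ is the orbifold fundamental group of $M_\Gamma$ with cone angle $2\pi/m_j$ along $B_j$ whenever the coefficient of $B_j$ has the form $1-1/m_j$ (boundary components of coefficient $1$ are simply removed). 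Plumbing calculus then yields a presentation with one generator per vertex of $\Gamma$, one self-intersection relation per exceptional vertex, a commutator per edge, a cycle relation per independent loop of $\Gamma$, and an orbifold relation $g_{B_j}^{m_j}=1$ per boundary leaf.

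\emph{Step 2 (classification input).} By Alexeev~\cite{Ale93}, the dual graph of the minimal log resolution of an lc surface singularity is, up to contraction of $(-2)$-chains, one of a short list: a chain, a tree with at most one high-valence node, a cycle of rational curves, a single smooth elliptic curve, or a nodal rational curve. In the pair setting, the components of the strict transform of $B$ appear as additional decorated leaves attached to this skeleton, and log canonicity caps both their coefficients and the number of branches that can emanate from any single exceptional vertex.

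\emph{Step 3 (case analysis and counting).} In each branch of Alexeev's list I would reduce the presentation of Step 1 by contracting degree-$2$ vertices of self-intersection $-2$ that carry no boundary attachment (their generator is forced by the self-intersection relations at the neighbors), and by eliminating further exceptional generators using the remaining self-intersection relations. The number of surviving generators is then controlled by the number of high-valence vertices of $\Gamma$, the first Betti number of $\Gamma$, and the number of orbifold boundary leaves through $x$; an analogous count governs the relations. Log canonicity enforces universal bounds on each of these quantities, producing the global bound of at most $4$ generators and $7$ relations.

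\emph{Step 4 (extremal configuration).} Tracing through the counts, the pair $(4,7)$ is attained only when $\Gamma$ is a star with one central exceptional vertex of self-intersection $-1$ supporting exactly four boundary leaves, each of coefficient $\tfrac{1}{2}$. This configuration arises from the blow-up of a single smooth point of $X$, so $(X;x)$ is smooth (hence toric at $x$) and $B$ consists of four analytic branches of coefficient $1/2$ meeting at $x$. The main technical obstacle I expect is precisely the bookkeeping in Step~3: in each branch of Alexeev's list one must verify that the reduced Mumford presentation is genuinely \emph{minimal}, and that no commutator or cycle relation becomes redundant after contracting $(-2)$-chains. This is especially delicate for the cyclic (cusp) graphs, where the cycle relation couples nontrivially with the surviving orbifold markings, and for configurations with several boundary leaves attached to a common high-valence vertex.
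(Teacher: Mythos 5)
Your overall strategy---a Mumford-type presentation read off a resolution, combined with the classification of lc surface germs and a case-by-case count---is essentially the route the paper takes (it works from a dlt modification plus a log resolution of the residual quotient singularities, rather than directly from the minimal log resolution, but the bookkeeping via Hirzebruch--Jung contractions is the same). Steps 1--3 are therefore sound in outline, modulo the case analysis you yourself flag as the hard part.

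Step 4, however, contains a genuine error. The extremal configuration realizing a minimal presentation with $4$ generators and $7$ relations is \emph{not} unique, and it does not force $(X;x)$ to be smooth. The paper's analysis (Cases 1.2 and 2.2.2 of Propositions~\ref{prop:caso-chain-rational}) shows that $(4,7)$ occurs both when the dlt modification extracts a single rational curve carrying four boundary branches of coefficient $\tfrac12$, and when it extracts a whole \emph{chain} of rational curves with two such branches attached at each end. In the first case the exceptional curve can have self-intersection $-m$ for any $m\geq 1$ (the relation $abcdx^{-m}$ survives), so $x\in X$ is a $\tfrac1m(1,1)$ cyclic quotient point, not a smooth point; in the second case $x\in X$ is a general cyclic quotient singularity. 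Consequently your conclusion ``$(X;x)$ is smooth'' is false, and the correct statement---that $X$ is \emph{toric} at $x$---does not follow from your argument. The paper obtains toricness by a separate argument: it runs a $(K_Y+E)$-MMP over $X$ to show $X$ is $\qq$-factorial at $x$, computes that the complexity $\dim X+\rank_\qq\Cl(X_x)-|B|$ vanishes, and invokes the characterization of toric singularities of Moraga--Svaldi. Some substitute for this step is needed; identifying the dual graph alone does not suffice, since you must still convert ``the resolution graph is a chain with four half-coefficient leaves'' into ``$X$ is formally toric at $x$.''
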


The previous result gives a bound on the number of generators and relations.
However, even groups with two generators
can be quite complicated.
Indeed, every finite simple group
rank at most $2$.
The following result gives a structural theorem regarding fundamental groups of surface lc singularities.

\begin{introthm}\label{introthm:solvable}
Let $(X,B;x)$ be a log canonical surface singularity.
Then, we have a short exact sequence
\[ 
1 \rightarrow N \rightarrow \pi_1^{\rm reg}(X,B;x) \rightarrow G \rightarrow 1 
\]
where $N$ is a solvable group of length at most $2$ and $G$ is a finite group of order at most $6$.
\end{introthm}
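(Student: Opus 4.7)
The plan is to extract $\pi_1^{\rm reg}(X,B;x)$ from a dlt modification and then use the classification of log canonical surface pair singularities to run a finite case analysis. Concretely, I fix a dlt modification $\pi\colon (Y,B_Y)\to (X,B)$ with $K_Y+B_Y=\pi^*(K_X+B)$, set $E=\pi^{-1}(x)$, and let $D=\lfloor B_Y\rfloor$ denote the non-klt part of the boundary. Mumford's machine, extended to the orbifold setting, computes $\pi_1^{\rm reg}(X,B;x)$ from the weighted dual graph $\Gamma$ of $E\cup\Supp(B_Y)$: generators correspond to vertices and edges of $\Gamma$, while relations come from commutation at each node, an Euler-type relation at each vertex that encodes the self-intersection together with the orbifold isotropy dictated by the fractional coefficients of $B_Y$, and cycle relations for each independent loop in $\Gamma$.

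Next, I would use the classification of dlt modifications of log canonical surface singularity pairs to reduce to finitely many combinatorial types. In these terms, the dual complex $\mathcal{D}(D)$ is at most one-dimensional, and is either empty (klt), a point (plt), a segment, or a circle; correspondingly $\Gamma$ is either a tree or contains a unique non-contractible loop (with, in the simple-elliptic case, a single distinguished elliptic vertex). In the tree cases the presentation yields a group which is virtually metabelian, the solvable normal subgroup $N$ of derived length $\le 2$ being assembled from the cyclic vertex and edge groups corresponding to a central subgraph, and the quotient $G$ being controlled by the automorphism group of the marked graph. In the cycle case and the simple-elliptic case, the presentation exhibits an explicit abelian normal subgroup isomorphic to $\zz^2$, coming respectively from $\pi_1$ of the elliptic curve, or from the Seifert-fibered link over a circle, and the quotient by this $\zz^2$ is a finite group of symmetries of the weighted graph.

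The numerical point is that in every case the residual finite group is either a subgroup of $\Aut(E)$ for an elliptic curve $E$, which has order at most $6$ and attains this bound only at $j(E)=0$, or a subgroup of a small dihedral or symmetric group acting on the vertices of the cycle; in the latter situation a rotation of the cycle can always be absorbed into the solvable normal subgroup, leaving only a small reflection or a vertex-stabilizer contribution to $G$. Combined with the explicit bound on the number of generators and relations furnished by Theorem~\ref{introthm:surf}, this forces $|G|\le 6$. The main obstacle will be the bookkeeping of which pieces of the Mumford presentation are placed into $N$ versus $G$: one must verify, case-by-case, that the maximal solvable normal subgroup one extracts really has derived length at most $2$, and that the symmetry factors that unavoidably remain outside $N$ do not exceed the automorphism group of an elliptic curve or a triangle. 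The most delicate situation is the cycle of length divisible by $3$ with an additional orbifold structure on the vertices, where one must carefully disentangle the rotation of the cycle (which must be folded into $N$) from the vertex-labeling symmetry (which contributes to $G$).
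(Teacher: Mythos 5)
Your skeleton --- pass to a dlt modification, compute $\pi_1^{\rm reg}$ via a Mumford-type presentation, and case-split on whether the exceptional divisor is an elliptic curve, a cycle of rational curves, or a chain --- is the same as the paper's. However, the group-theoretic core, namely the identification of $N$ and $G$ and the reason $|G|\le 6$, is misattributed, and the mechanisms you propose would fail on concrete examples. In the elliptic-curve and cycle cases the \emph{entire} group is already solvable of derived length $2$ ($\zz\rtimes\zz^2$, resp.\ $\zz^2\rtimes\zz$: the commutator subgroup lies in the abelian normal $\zz^2$), so one simply takes $N=\pi_1^{\rm reg}(X,B;x)$ and $G=1$. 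No quotient by $\Aut(E)$ of an elliptic curve ever appears: the link is an $S^1$-bundle over the elliptic curve itself, not over a quotient, and the monodromy $\zz$ of the cycle sits inside $N$. The coincidence $|\Aut(E)|\le 6$ is a red herring.

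The actual source of the bound $6$ is the chain case, most pointedly a single exceptional rational curve $E_1$, where adjunction forces $\deg(\Delta_1+\hat{B}_s|_{E_1})\le 2$ and hence forces the basket of orbifold points on $E_1\simeq\pp^1$ to be cyclic, spherical, or one of the Euclidean types $((2,2,2,2))$, $((3,3,3))$, $((2,4,4))$, $((2,3,6))$. Modulo the central loop $x$ around $E_1$, the group becomes a wallpaper group $\langle a,b,c\mid a^p,b^q,c^r,abc\rangle$ with $1/p+1/q+1/r=1$ (or the $(2,2,2,2)$ group); $N$ is the preimage of its translation lattice $\zz^2$ (a nilpotent group of class $2$, a central extension of $\zz^2$ by $\langle x\rangle$), and $G$ is the point group $\zz/2\zz$, $\zz/3\zz$, $\zz/4\zz$, or $\zz/6\zz$. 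Your proposal instead bounds $G$ by the automorphism group of the marked dual graph, which is neither correct nor $\le 6$ in general: for the basket $((2,2,2,2))$ the dual graph is a star with four $(-2)$-legs whose automorphism group has order $24$, while the correct $G$ is $\zz/2\zz$; for $((2,3,6))$ the graph generically has no nontrivial symmetry at all, yet $G\simeq\zz/6\zz$. Without isolating these Euclidean $2$-orbifold structures coming from the different, the case analysis cannot be completed.
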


solvable groups
of length at most $2$ are somewhat analogous
to finite abelian groups of rank at most $2$.
Thus, the regional fundamental group of lc surface singularities still behaves like
the klt counterpart.
In Table~\ref{table1}, we
describe the possible isomorphism classes
of the regional fundamental groups of lc surface singularities.
For each isomorphism class, we detail the minimal resolution of $(X;x)$ and the strict transform of $B$ on the minimal resolution that leads to that group.

\subsection{Log Calabi--Yau surfaces} 
As a side product, we study the regional fundamental group of log Calabi--Yau surfaces.
In this case, we do not obtain a structural theorem, but we can bound the number of free generators of the abelianization.
This is a first step towards obtaining a version of Theorem~\ref{introthm:solvable} for the regional fundamental group of log Calabi--Yau surfaces.

\begin{introthm}\label{introthm:2-dim-lcy}
Let $(X,B)$ be a projective log Calabi-Yau pair of dimension 2.
Then, we have that 
\[
\rank(\pi_1^{\rm reg}(X,B)_{\qq}^{\rm ab})\leq 4.
\]
\end{introthm}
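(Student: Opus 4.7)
The plan is to pass to a log smooth dlt model, interpret the rational abelianization of the regional fundamental group via the first cohomology of the complement of the boundary, and bound this cohomology using mixed Hodge theory together with the structure of projective log Calabi--Yau surfaces.

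First, I would take a $\qq$-factorial dlt modification $\pi\colon(Y,B_Y)\to(X,B)$ with $K_Y+B_Y=\pi^{\ast}(K_X+B)\equiv 0$ and $(Y,\operatorname{Supp}(B_Y))$ log smooth; using the crepant birational invariance of the regional fundamental group (an input I would rely on from an earlier section of the paper), one may replace $(X,B)$ by $(Y,B_Y)$. Setting $U:=Y\setminus\operatorname{Supp}(B_Y)$, the group $\pi_1^{\mathrm{reg}}(Y,B_Y)$ is the quotient of $\pi_1(U)$ obtained by killing suitable multiples of the meridians around the components of $B_Y$ with fractional coefficients. Abelianizing and tensoring with $\qq$ exhibits $\pi_1^{\mathrm{reg}}(Y,B_Y)_{\qq}^{\mathrm{ab}}$ as a quotient of $H_1(U;\qq)$, so it is enough to bound $\dim_{\qq}H^1(U;\qq)$.

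Second, I would analyze $H^1(U;\qq)$ via the Gysin sequence for the smooth projective $Y$ and its SNC divisor $D:=\operatorname{Supp}(B_Y)$:
\[
0\to H^1(Y;\qq)\to H^1(U;\qq)\xrightarrow{\mathrm{Res}} H^0(\widetilde D;\qq)(-1)\xrightarrow{\,c\,} H^2(Y;\qq),
\]
where $\widetilde D$ is the normalization of $D$ and $c$ sends each component to its fundamental class. This yields
\[
\dim H^1(U;\qq)\le h^1(Y)+\#\{\text{components of }D\}-\operatorname{rank}\bigl\langle[D_i]\bigr\rangle_{H^2(Y;\qq)}.
\]
Under the hypothesis $K_Y+B_Y\equiv 0$ I would then bound each summand: the Albanese of $Y$ has dimension at most $2$, giving $h^1(Y)\le 4$ with equality only when $Y$ is birational to an abelian surface and hence $B_Y=0$; in every other case $h^1(Y)$ is strictly smaller, and the numerical equivalence $\sum a_iD_i\equiv -K_Y$, combined with the classification of dual complexes of projective log Calabi--Yau surfaces (a point, an interval, or a circle when $\lfloor B_Y\rfloor\ne 0$, after a suitable crepant MMP), forces the residue contribution $\#\{D_i\}-\operatorname{rank}\langle[D_i]\rangle$ to add at most $4-h^1(Y)$.

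The main obstacle is this final trade-off: I need to ensure that the $H^1(Y;\qq)$ contribution and the residue contribution never combine to exceed $4$. I expect the cleanest route is to run a $(K+B)$-trivial MMP reducing $Y$ to a minimal model that is either a smooth minimal surface of Kodaira dimension $0$ (where $B_Y=0$ and the bound $h^1(Y)\le 4$ is classical) or a rational Looijenga/toric-type pair, in which the Picard and boundary combinatorics give the bound on $\#\{D_i\}-\operatorname{rank}\langle[D_i]\rangle$ explicitly. A subtler point is checking that the passage from $H_1(U;\qq)$ to $\pi_1^{\mathrm{reg}}(X,B)_{\qq}^{\mathrm{ab}}$ does not increase the rank, i.e.\ that the relations coming from fractional coefficients of $B_Y$ only cut down the rank.
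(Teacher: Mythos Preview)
Your approach is genuinely different from the paper's. The paper does not use mixed Hodge theory or the Gysin sequence at all: it runs a $K_X$-MMP (not a $(K+B)$-MMP, which is trivial), uses Lemma~\ref{lem:MMP-vs-reg-fun} to reduce to three cases, and then handles the Calabi--Yau case via the index-one cover and Mukai's bound on symplectic automorphisms of K3 surfaces, the Mori-fiber-space case via the fibration short exact sequence of Lemma~\ref{lem:fun-reg-vs-cbf}, and the Fano case by a contradiction argument using large $(\zz/k\zz)^m$-covers and boundedness of finite group actions on Fano surfaces.

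However, your proposal contains a genuine gap. You reduce to bounding $\dim_\qq H^1(U;\qq)$ with $U=Y\setminus\operatorname{Supp}(B_Y)$, but this intermediate quantity is \emph{not} bounded by~$4$. Take $(X,B)=(\pp^2,\tfrac{1}{2}\sum_{i=1}^6 L_i)$ with six general lines; this is a klt log Calabi--Yau pair, already log smooth, so $(Y,B_Y)=(X,B)$ and $U=\pp^2\setminus\bigcup L_i$. The Gysin sequence gives $h^1(Y)=0$, six components all with class $H$, hence $\dim H^1(U;\qq)=6-1=5>4$. The true value of $\rank(\pi_1^{\rm reg}(X,B)^{\rm ab}_\qq)$ here is~$0$, because the meridian relations $\gamma_i^2=1$ kill every $[\gamma_i]$ over $\qq$. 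So the passage from $H_1(U;\qq)$ to the regional abelianization is not a harmless ``only cuts down the rank'' step that can be ignored; it is where the entire content lies when $B$ has fractional coefficients. Your Gysin bound must be applied to $Y\setminus\lfloor B_Y\rfloor$, not $Y\setminus\operatorname{Supp}(B_Y)$, and then the combinatorial trade-off you describe becomes a different (and still nontrivial) problem.

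A secondary issue: the ``crepant birational invariance of $\pi_1^{\rm reg}$'' you invoke is not what the paper proves. Lemma~\ref{lem:MMP-vs-reg-fun} gives a surjection $\pi_1^{\rm reg}(Y,B_Y)\twoheadrightarrow\pi_1^{\rm reg}(X,B)$ when $X\to Y$ is a \emph{contraction}; for a dlt modification $Y\to X$ this goes the wrong way for your purpose. Moreover, a dlt modification of a surface need not be log smooth ($Y$ may have quotient singularities away from the lc centers), so ``$\qq$-factorial dlt with $(Y,\operatorname{Supp}(B_Y))$ log smooth'' is not automatic.
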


By Example~\ref{ex:possible-ranks}, all the possible ranks can happen in the previous theorem.
We refer the reader to Example~\ref{ex:cone-over-smooth-CY} for some conjectural statements about the regional fundamental group of log Calabi--Yau pairs.
Using some of the ideas in the proof of
Theorem~\ref{introthm:2-dim-lcy}, we will
prove the following theorem regarding the \'etale universal cover and universal cover of open Calabi--Yau surfaces~\footnote{An open Calabi--Yau surface is the smooth locus of a projective klt surface $X$ with $K_X\sim_\qq 0$.}.
The following statement is related to the work of Zhang (see, e.g.,~\cite[Proposition 4.1]{SZ01}).

\begin{introthm}\label{introthm:universal-open-k3}
Let $X$ be an open Calabi--Yau surface.
Then, one of the following statements holds:
\begin{enumerate}
    \item[(i)] The universal cover of $X$ is the complement of
    $\Lambda\mathcal{S}$ in $\cc^2$, 
    where $\Lambda$ is lattice of rank $4$ and $\mathcal{S}$ is a finite set of closed points, or 
    \item[(ii)] the \'etale universal cover of $X$ is the complement of finitely many points on the smooth locus of a K3 surface $X$ with $K_X\sim 0$.
\end{enumerate}
\end{introthm}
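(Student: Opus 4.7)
The plan is to pass from the open Calabi--Yau surface $X$ to a smooth projective Calabi--Yau surface via the index-one cover and minimal resolution, apply the Enriques--Kodaira classification, and treat the two resulting cases separately.

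Write $X = Z^{\rm sm}$ for a projective klt surface $Z$ with $K_Z \sim_\qq 0$. First I would form the index-one cover $\pi \colon Z_1 \to Z$, a cyclic quasi-\'etale Galois cover with $K_{Z_1} \sim 0$ Cartier. Then $Z_1$ is Gorenstein klt, hence has at worst Du Val singularities, and $\pi$ is \'etale over $X$. Setting $X_1 := \pi^{-1}(X)$, the \'etale universal cover of $X$ coincides with that of $X_1$. Next let $\mu \colon Y \to Z_1$ be the minimal resolution; since Du Val singularities are canonical, $\mu$ is crepant and $K_Y \sim 0$. By the Enriques--Kodaira classification, $Y$ is either an abelian surface or a K3 surface.

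In the abelian case, abelian surfaces carry no rational curves, so $\mu$ contracts nothing and $Z_1 = Y$ is already smooth. Hence $X_1 = Y \setminus \pi^{-1}(\mathrm{Sing}(Z))$ is $Y$ minus finitely many points. The topological universal cover of $Y$ is $\cc^2$ with deck group $\Lambda := \pi_1(Y) \cong \zz^4$ acting by translations; pulling back yields $\cc^2 \setminus \Lambda\mathcal{S}$ as the universal cover of $X_1$, where $\mathcal{S}$ is the preimage of the removed points in a fundamental domain. Since removing a discrete set from $\cc^2$ preserves simple connectedness, this is the universal cover of $X$, giving case (i).

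In the K3 case, $Y$ is simply connected, so $\pi_1(Z_1) = 1$; Mumford's description of $\pi_1(Z_1^{\rm sm})$ in terms of local fundamental groups, combined with the global meridian relation forced by the simple connectedness of $Y$, shows that $\pi_1(Z_1^{\rm sm})$, and hence $\pi_1^{\rm et}(X_1)$, is finite. Taking the \'etale universal cover $U \to X_1$ and extending to a finite quasi-\'etale cover $\rho \colon \bar Z \to Z_1$ by normalization in the function field of $U$, one obtains a klt surface with $K_{\bar Z} \sim 0$. By maximality of the cover, $\bar Z$ has no residual local fundamental groups at its potential Du Val singularities, so Zariski--Nagata purity forces $\bar Z$ to be smooth, and a Beauville--Bogomolov-type argument keeps it in the K3 class. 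Thus $\bar Z$ is a smooth K3 surface, and $U = \bar Z \setminus (\pi\rho)^{-1}(\mathrm{Sing}(Z))$ is $\bar Z$ minus finitely many points, giving case (ii).

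The hard part will be the K3 case, with two components. The first is the finiteness of $\pi_1(Z_1^{\rm sm})$, which I would handle by a van Kampen-style argument on $Y \setminus \mathrm{Exc}(\mu)$, using that each Du Val local fundamental group is a finite binary polyhedral group and that the global meridian relation in the simply connected $Y$ kills the free-product ambiguity. The second is verifying that the maximal quasi-\'etale extension $\bar Z$ is a smooth K3 rather than an abelian surface or a K3 with residual Du Val points; here the topological Euler characteristic comparison ($\chi_{\mathrm{top}}(\mathrm{K3}) = 24$ versus $\chi_{\mathrm{top}}(\mathrm{abelian}) = 0$) rules out the abelian alternative under the finite quasi-\'etale cover, and the maximality of the universal cover together with Zariski--Nagata purity rules out residual singularities.
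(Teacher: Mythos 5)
Your abelian branch matches the paper's, but the K3 branch has a genuine gap: you apply the abelian/K3 dichotomy at the wrong stage, namely to the minimal resolution of the index-one cover $Z_1$ itself. The Kummer quotient $Z_1=A/\{\pm 1\}$ of an abelian surface is a Gorenstein canonical surface with $K_{Z_1}\sim 0$ whose minimal resolution is a K3 surface, yet $\pi_1(Z_1^{\rm sm})$ contains $\pi_1(A\setminus A[2])\simeq \zz^4$ as an index-two subgroup and is therefore infinite. So your claim that a van Kampen argument plus the global meridian relation forces $\pi_1(Z_1^{\rm sm})$ to be finite whenever the resolution is K3 is false, and this example in fact lands in case (i) of the theorem, not case (ii). The Euler-characteristic comparison you invoke at the end fails for the same reason: quasi-\'etale covers are not \'etale, so $\chi_{\rm top}$ is not multiplicative, and a quasi-\'etale cover of a canonical K3 can perfectly well be an abelian surface (again $A\to A/\{\pm 1\}$).

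The paper avoids this by inserting one more step before the dichotomy: after the index-one cover $\overline{Y}\to\overline{X}$ it invokes \cite[Theorem 1.5.(2)]{GKP16} to produce a further finite cover $q\colon \overline{Z}\to\overline{Y}$, ramified only over the singular points, for which $\hat\pi_1(\overline{Z})\simeq\hat\pi_1(\overline{Z}^{\rm reg})\simeq\hat\pi_1(Z)$, and only then splits into cases according to the minimal resolution of $\overline{Z}$. On that maximal cover the residual local fundamental groups have already been absorbed, so in the K3 case $\overline{Z}$ is simply connected and $\hat\pi_1(Z)$ is trivial (note the conclusion is only about the \'etale universal cover, i.e. the profinite completion), while the Kummer-type examples are correctly routed to the abelian case. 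Your instinct to build a maximal quasi-\'etale cover and use purity is essentially what \cite{GKP16} provides, but it must be done before, not after, deciding whether one is in the abelian or the K3 alternative.
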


In the previous statement, 
the \'etale universal cover
is the cover 
associated to the pro-finite completion 
of the fundamental group.
It is not clear what should be expected for the \'etale universal covers of open Calabi--Yau $3$-folds.

\subsection{Three dimensional log canonical singularities} In dimension three, 
the fundamental groups of log canonical singularities can be much more complicated. 
In~\cite{Kol12}, Koll\'ar showed 
that for a surface group $G$
there exists a $3$-fold isolated lc singularity $(X_G;x)$ 
whose local fundamental group is a finite cyclic extension of $G$.
In particular, the local fundamental group
$\pi_1^{\rm loc}(X_G;x)$ is not a solvable group.
In this direction, we prove that surface groups
are indeed local fundamental groups
of $3$-dimensional isolated lc singularities.

\begin{introthm}\label{introthm:surf-group-in-3}
Let $S$ be a connected $2$-dimensional manifold without boundary. 
Then, there exists an isolated $3$-fold log canonical singularity $(X;x)$ for which 
$\pi_1^{\rm loc}(X;x)\simeq \pi_1(S)$.
\end{introthm}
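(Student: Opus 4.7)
The plan is to apply the paper's main construction — which realizes the fundamental group of any $2$-dimensional smooth polyhedral complex as the local fundamental group of some $3$-dimensional log canonical singularity — to a smooth polyhedral complex of the homotopy type of $S$.

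First, I would fix a triangulation $T$ of $S$, which exists for every closed $2$-manifold. After a sequence of barycentric subdivisions, I may assume $T$ is as combinatorially fine as convenient; subdivision does not change the underlying topological space and in particular preserves $\pi_1$. This reduces the problem to constructing a smooth polyhedral complex whose underlying space is $|T|\cong S$.

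Second, I would upgrade $T$ to a \emph{smooth} $2$-dimensional polyhedral complex $\mathcal{P}$ with the same underlying space. Around each vertex $v$, the collection of $2$-cones at $v$ must form the fan of a smooth toric surface, and around each edge the local structure must match that of two smooth toric surfaces glued along a divisor. Whenever a local fan is singular, I would subdivide it using the standard toric desingularization procedure, i.e.\ by inserting primitive vectors along the ``worst'' cones until every cone is generated by a $\zz$-basis; each such subdivision corresponds to a local barycentric refinement of $T$ and preserves the homotopy type. The output is a smooth polyhedral complex $\mathcal{P}$ homeomorphic to $S$.

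Third, I would invoke the paper's main construction on $\mathcal{P}$ to obtain a $3$-fold log canonical singularity $(X;x)$ with $\pi_1^{\rm loc}(X;x)\simeq \pi_1(\mathcal{P})\simeq \pi_1(S)$. Isolatedness of $(X;x)$ would follow from the fact that $S$ has no boundary, so $\mathcal{P}$ is a closed pseudomanifold and the associated exceptional divisor of the corresponding dlt modification contracts to a single point rather than to a positive-dimensional locus.

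The main obstacle is the second step: verifying that any triangulation of $S$ can be equipped with integral affine data satisfying the smoothness condition at every vertex. This is the $2$-dimensional analogue of the resolution-mimicking procedure the paper carries out in dimension $3$ for $3$-manifolds smoothly embedded in $\rr^4$, and should be substantially easier because smooth toric surface fans are completely classified and any rational fan admits a smooth refinement through a controlled, finite sequence of barycentric subdivisions; no global embedding into Euclidean space is needed at this dimension.
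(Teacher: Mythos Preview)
Your proposal has a genuine gap in Step 2: you have misidentified what ``smooth polyhedral complex'' means in this paper. The condition is not that the cone fan at each vertex be the fan of a smooth toric surface (i.e.\ each $2$-cone regular). The paper's definition requires the complex to be \emph{simple}: the nerve at every $k$-cell must be an $(n-k)$-simplex. For $n=2$ this means exactly three maximal polygons meet at every vertex. This is what forces the glued toric variety $T$ to have simple normal crossings and, via adjunction, to be Calabi--Yau; if four or more components meet at a point, $T$ is not snc and the Koll\'ar construction in Proposition~\ref{prop:prescribed-exceptional} does not apply.

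Your proposed remedy --- toric subdivision of the local fan, or barycentric refinement of the triangulation --- moves in the wrong direction: it \emph{increases} the number of top cells meeting at a vertex, so it makes the complex less simple, not more. The paper instead performs the opposite operation. At each vertex $v$ of valence $d\ge 4$ it truncates the corner of every incident triangle (each realized as the concrete smooth lattice triangle ${\rm conv}\{(0,0),(0,3),(3,0)\}$, which leaves room for the cuts) and glues in a smooth lattice $d$-gon along the new edges; this is the $2$-dimensional blow-up of Definition~\ref{def:blow-up-poly}. After this, every vertex is trivalent and all polygons are smooth lattice polygons, so the complex is smooth in the required sense, and Theorem~\ref{introthm:from-sm-poly-comp-to-lc-sing} applies. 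Your justification of isolatedness is also slightly off: it comes from the dimension bound $\dim T\le 4$ in Proposition~\ref{prop:prescribed-exceptional}(5), while the absence of boundary is what guarantees every facet lies in exactly two maximal cells, hence $K_T\sim 0$.
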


In Example~\ref{ex:spherical-dc}, we show that many finite abelian groups of rank at most $3$ appear 
as the fundamental group of $3$-dimensional lc singularities.
As a negative result, 
we will prove that free groups
with at least $3$ generators
are not fundamental groups of isolated lc $3$-fold singularities.

\begin{introthm}\label{introthm:not-free-in-3}
No isolated $3$-fold log canonical singularity $(X;x)$ satisfy that $\pi^{\rm loc}(X;x)\simeq F_r$ with $r\geq 2$.
\end{introthm}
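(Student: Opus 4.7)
The plan is to argue by contradiction, separating the klt case from the strictly log canonical case. Assume $(X;x)$ is an isolated $3$-fold lc singularity with $\pi_1^{\rm loc}(X;x) \simeq F_r$ for some $r \geq 2$. If $(X;x)$ were klt, then Theorem~\ref{introthm:fund-group-klt} applied in dimension $n=3$ would give a short exact sequence $1 \to A \to \pi_1^{\rm loc}(X;x) \to N \to 1$ with $A$ finite abelian and $N$ finite, so that $\pi_1^{\rm loc}(X;x)$ itself would be finite. Since $F_r$ is infinite for $r \geq 2$, this is a contradiction, and we may assume $(X;x)$ is strictly log canonical.

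Next, take a dlt modification $f\colon Y \to X$ with reduced exceptional divisor $E = \sum_{i=1}^{m} E_i$ (each $E_i$ of log discrepancy zero) and let $L$ denote the link of $x$, which deformation retracts onto the boundary of a regular neighborhood of $E$ in $Y$. I aim to produce a non-cyclic abelian subgroup of $\pi_1(L)$, contradicting the fact that every abelian subgroup of a free group is cyclic. If $m = 1$, then $(Y, E_1)$ is plt near $E_1$, so by adjunction $(E_1, \mathrm{Diff}_{E_1})$ is a projective klt log Calabi--Yau surface, and up to a finite \'etale cover $L$ is an $S^1$-bundle over $E_1$. Using the classification of fundamental groups of projective log Calabi--Yau and log Fano surfaces underlying Theorem~\ref{introthm:2-dim-lcy} and Theorem~\ref{introthm:universal-open-k3}, the group $\pi_1(E_1)$ is virtually abelian, whence $\pi_1(L)$ is virtually solvable and in particular amenable; but $F_r$ with $r \geq 2$ is non-amenable, a contradiction. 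If $m \geq 2$, connectedness of $L$ forces $E$ to be connected, so there exist components $E_i, E_j$ with $C := E_i \cap E_j \neq \emptyset$. At a generic smooth point of $C$ I choose local coordinates $(x,y,z)$ on $Y$ with $E_i = \{x=0\}$ and $E_j = \{y=0\}$; then $L$ contains a real $2$-torus $T^2 = \{|x|=|y|=\varepsilon\}$, yielding commuting meridians $\mu_i, \mu_j \in \pi_1(L)$. It then suffices to show that $\mu_i$ and $\mu_j$ generate a subgroup isomorphic to $\mathbb{Z}^2$.

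The main obstacle is establishing this rank-two independence of the meridians. My approach is to pass to the abelianization $H_1(L;\mathbb{Z})$, computed by Mayer--Vietoris on a regular neighborhood of $E$ as the cokernel of the intersection pairing $H_2(Y;\mathbb{Z}) \to \bigoplus_{i=1}^{m} \mathbb{Z}\cdot[\mu_i]$, and to argue that under the strictly lc hypothesis the classes $[\mu_i]$ and $[\mu_j]$ cannot become proportional in this cokernel. The delicate point is when $C$ is rational, since $\pi_2(\mathbb{P}^1) = \mathbb{Z}$ can then impose extra relations among the meridians; to handle this I would combine a weight-filtration argument on $H^1(L;\mathbb{Q})$ (which must be pure of weight one since $\pi_1(L) \simeq F_r$ with $r \geq 2$) with the explicit mixed Hodge structure on the link coming from the dlt modification, to force the non-cyclic abelian subgroup and complete the contradiction.
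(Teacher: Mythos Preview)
Your reduction to the strictly log canonical case is fine, and in the $m=1$ situation the amenability argument can be made to work, modulo the technical point that $Y$ may have isolated klt singularities along $E_1$, so the link is not literally an $S^1$-bundle over $E_1$; you would need to pass through $\pi_1(Y\setminus Y^{\rm sing})$ and use that the local fundamental groups of klt points are finite.

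The real problem is your $m\geq 2$ strategy. You correctly observe that the meridians $\mu_i,\mu_j$ around two intersecting components commute, but you then try to show they generate a copy of $\mathbb{Z}^2$. This is precisely what fails, and your proposed Mayer--Vietoris/Hodge-theoretic repair cannot rescue it. Concretely, on each component $E_i$ one can find movable curves $C_i$ (rational fibers, or strict transforms of lines after suitable blow-ups), and the restriction of the normal bundle of $E$ to $C_i$ imposes a relation of the form $\mu_i^{m_i}=\prod_{j\neq i}\mu_j^{k_{j,i}}$ with $m_i=-E_i\cdot C_i$ and $k_{j,i}=E_j\cdot C_i$. The resulting intersection matrix $(E_i\cdot C_j)$ is invertible, and together with the commutativity of the $\mu_i$ this forces every meridian to be \emph{torsion} in $\pi_1(Y\setminus E)$. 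In the abelianization this is exactly the statement that the meridian classes die modulo the image of $H_2$, so your cokernel computation will not produce two independent infinite-order classes: it will produce finite-order ones.

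The paper uses this torsion conclusion in the opposite direction from what you attempt. Since $F_r$ is torsion-free, the meridians must be trivial, so the surjection $\pi_1^{\rm loc}(X;x)\twoheadrightarrow \pi_1(E)\twoheadrightarrow \pi_1(\mathcal{D}(X;x))$ is in fact an isomorphism (after also killing the torsion coming from klt points of $Y$). The contradiction is then obtained on the dual complex side: in coregularity~$0$ one gets $F_r\simeq \pi_1(\mathcal{D}(X;x))$ with $\mathcal{D}(X;x)$ a closed surface, which is impossible; in coregularity~$1$ one reduces to $\mathbb{Z}$ being a one-relator group of rank at least~$2$; and in coregularity~$2$ (your $m=1$) the Magnus--Karrass--Solitar theorem forces $\pi_1(E)$ to be a one-relator group, which is ruled out by the classification of fundamental groups of smooth Calabi--Yau surfaces. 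So the correct flow of the argument is: commuting meridians $\Rightarrow$ torsion meridians $\Rightarrow$ trivial in $F_r$ $\Rightarrow$ contradiction on $\mathcal{D}(X;x)$, rather than trying to manufacture a $\mathbb{Z}^2$ inside $F_r$.
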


In particular, not every finitely presented group is the fundamental group
of an isolated lc singularity of dimension $3$.
The previous statement is closely related
to the fact that 
the fundamental group of a connected $2$-dimensional manifold without boundary is not free. In order to prove the previous statement,
we will consider the natural surjective homomorphism
$\pi_1^{\rm loc}(X;x)\rightarrow \pi_1(\mathcal{D}(X;x))$ and prove that most of the elements in the kernel are torsion.
Thus, if $\pi_1^{\rm loc}(X;x)$ was free, then it would force the fundamental group of the manifold $\mathcal{D}(X;x)$, that has dimension at most $2$, to be free. This can only happen for $S^1$ and a point.
In the first case, we use the Magnus-Karras-Solitar Theorem to deduce that $\zz$ must be a one-relator group of rank at least $2$, leading to a contradiction.
In the second case, we again use the Magnus-Karras-Solitar Theorem to conclude that there exists a smooth Calabi--Yau surface whose fundamental group is a one-relator group.
This will also lead to a contradiction.
Thus, no isolated $3$-fold lc singularity has free fundamental group with at least $3$ generators.

Theorem~\ref{introthm:surf-group-in-3} and Theorem~\ref{introthm:not-free-in-3} give new examples and constraints for the fundamental groups of lc $3$-fold singularities.
However, these do not give a complete description of the fundamental groups in dimension $3$, as we do have in dimension $2$.
In the case of $3$-dimensional lc singularities of coregularity $0$ (see Definition~\ref{def:reg}), we expect the fundamental group to be a finite cyclic extension of a surface group.
Although, it is not clear whether any such an extension can appear (see Question~\ref{quest:3-dim-extension}).

\subsection{Four-dimensional log canonical singularities} In dimension $4$,
we will show that every fundamental group of a $3$-dimensional manifold embedded in $\rr^4$ appears
as the fundamental group of an isolated lc singularity. In particular, every free group appears
as the fundamental group of an isolated $4$-dimensional lc singularity

\begin{introthm}\label{introthm:free-in-4}
Let $M$ be a connected $3$-dimensional manifold without boundary, smoothly embedded in $\rr^4$.
There exists a $4$-dimensional isolated lc singularity $(X;x)$ for which $\pi_1^{\rm loc}(X;x)\simeq \pi_1(M)$.
In particular, every free group appears as the fundamental group of an isolated $4$-dimensional lc singularity.
\end{introthm}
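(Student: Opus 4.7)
The plan is to reduce the theorem to the two-step architecture announced in the abstract. By the realization result for smooth polyhedral complexes, for any smooth polyhedral complex $\mcp$ of dimension $n$ there exists an $(n+1)$-dimensional isolated log canonical singularity $(X;x)$ with $\pi_1^{\rm loc}(X;x)\simeq \pi_1(\mcp)$. Taking $n=3$, it therefore suffices to produce a smooth polyhedral complex of dimension $3$ that is homotopy equivalent to $M$.

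To build a candidate, I would use the smooth embedding $M\hookrightarrow \rr^4$ to pull back a sufficiently fine smooth triangulation of $\rr^4$ adapted to $M$, obtaining a polyhedral complex $\mathcal{Q}$ of dimension $3$ with $|\mathcal{Q}|$ homeomorphic to $M$, hence with $\pi_1(\mathcal{Q})\simeq \pi_1(M)$. The ambient embedding data records, for each face of $\mathcal{Q}$, how it sits inside $\rr^4$; this extra information is what makes the combinatorial notion of smoothness available once $\mathcal{Q}$ is refined.

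The main technical task, and the one I expect to be the principal obstacle, is to modify $\mathcal{Q}$ into a smooth polyhedral complex $\mcp$ without changing its homotopy type. Mirroring the structure of log resolutions in birational geometry, the idea is to identify the faces on which the local model of $\mathcal{Q}$ fails to be smooth and to apply combinatorial operations --- star subdivisions, barycentric refinements along the non-smooth locus, or analogous moves --- that remove the defect. Each such move leaves the underlying topological space unchanged, so the homotopy type is preserved throughout; termination must be ensured by designing a combinatorial invariant that strictly decreases under each admissible move, in direct analogy with the discrepancy-based arguments that drive toric and dlt resolution algorithms. Once termination is established, the output $\mcp$ is the sought smooth polyhedral complex of dimension $3$ homotopy equivalent to $M$, and composing with the first step yields $\pi_1^{\rm loc}(X;x)\simeq \pi_1(\mcp)\simeq \pi_1(M)$.

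For the final assertion about free groups, recall that every free group $F_r$ is the fundamental group of the closed $3$-manifold $M_r:=\#^r(S^2\times S^1)$, and that $M_r$ embeds smoothly in $\rr^4$ as the boundary of the $4$-dimensional handlebody obtained from $D^4$ by attaching $r$ trivial $1$-handles (a handlebody that itself sits smoothly in $\rr^4$). Applying the first part of the theorem to $M=M_r$ produces an isolated $4$-dimensional lc singularity with local fundamental group $F_r$.
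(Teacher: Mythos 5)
Your first reduction (invoke Theorem~\ref{introthm:from-sm-poly-comp-to-lc-sing} with $n=3$, then realize $\pi_1(M)$ by a smooth $3$-dimensional polyhedral complex) and your final paragraph on $\#^r(S^2\times S^1)$ both match the paper. The gap is in the middle: your construction of the smooth polyhedral complex does not work as described, and the part you defer to ``combinatorial moves with a decreasing invariant'' is precisely the content of the proof.

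Concretely, a triangulation $\mathcal{Q}$ of $M$ is essentially the worst possible starting point for producing a \emph{simple} polyhedral complex: simplicity requires the nerve of every vertex to be a $3$-simplex, i.e.\ every vertex to lie in exactly $4$ maximal cells, whereas in a triangulated $3$-manifold the nerve of a vertex is a triangulated $2$-sphere (its link) with arbitrarily many vertices. Star subdivisions and barycentric refinements only increase the number of top cells meeting a vertex, so no sequence of subdivisions terminates in a simple complex; there is no decreasing invariant to design because the moves go in the wrong direction. This is why the paper passes to the \emph{dual}: it thickens $M$ to a subcomplex of the Freudenthal decomposition of $\rr^4$, takes the boundary of its dual (this is where the embedding in $\rr^4$ is used, and it yields two disjoint copies of $M$), proves by explicit coordinate computation that the dual cells are genuine convex lattice polytopes (Lemma~\ref{lem:dual-decomp}) --- the Poincar\'e dual of an arbitrary triangulation only gives combinatorially convex cells --- and then classifies the finitely many possible non-simple local models (vertex nerves are $3$-simplices or triangular prisms, edge nerves are triangles or quadrilaterals). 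The remaining defects are removed by blow-ups, but as Remark~\ref{rem:blow-up-existence} points out, blow-ups of polyhedral complexes need not exist; Step~2 of Proposition~\ref{prop:smoothing} exhibits explicit embeddings of the offending vertex neighborhoods as strongly polytopal fans in $\qq^3$ precisely to guarantee their existence, and lattice smoothness of the resulting polytopes must also be tracked. None of this is recoverable from the moves you propose, so the middle step of your argument is a restatement of Theorem~\ref{introthm:3-manifold} rather than a proof of it.
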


The last part of the previous theorem follows by considering $3$-manifold 
$M_r:=\#^r(S^2\times S^1)$ that admits a smooth embedding in $\rr^4$.
The proof of the previous theorem makes use of Koll\'ar's strategy developed in~\cite{Kol12}.
In this paper, the author constructs an $(n+1)$-dimensional lc singularity $(X;x)$ starting 
from an $n$-dimensional snc Calabi--Yau variety $T$ (possibly with many irreducible components). 
Then, they prove that the local fundamental group $\pi_1^{\rm loc}(X;x)$ will naturally surject onto $\pi_1(T)$.
Thus, in order to prove Theorem~\ref{introthm:free-in-4}, we will 
first need to prove the following statement.

\begin{introthm}\label{introthm:snc-CY-dim-3}
Let $M$ be a connected $3$-dimensional manifold without boundary, smoothly embedded in $\rr^4$.
There exists a $3$-dimensional snc Calabi--Yau variety $T$ for which
$\pi_1(T)\simeq \pi_1(M)$.
\end{introthm}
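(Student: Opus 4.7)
The plan is to factor the proof through a purely combinatorial intermediate object and then realize it as a variety. First, produce a smooth polyhedral complex $\mathcal{P}$ of dimension $3$ whose underlying topological space is homotopy equivalent to $M$, and then realize $\mathcal{P}$ as the dual complex of an snc Calabi--Yau threefold $T$. The first step is the main combinatorial work announced in the abstract; the second step is then a direct toric gluing construction.

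For the combinatorial step, I would start from a smooth triangulation of $M$, available since $M$ is a smooth $3$-manifold. The embedding $M \hookrightarrow \rr^4$ provides a tubular neighborhood $N$ of $M$ and in particular the codimension-$1$ data needed to thicken the triangulation into a polyhedral complex in $\rr^4$ with the same homotopy type. I would then perform toric-style stellar subdivisions on the non-smooth cones appearing in vertex links, mimicking the resolution of toric singularities, until every vertex link is combinatorially isomorphic to the boundary of a smooth complete fan in $\rr^3$. Since each such subdivision is a blow-up in the combinatorial sense, it preserves the homotopy type, and the output is a smooth polyhedral complex $\mathcal{P}$ homotopy equivalent to $M$.

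For the realization step, at each vertex $v \in \mathcal{P}$ the smoothness condition gives a complete smooth fan $\Sigma_v$ in $\rr^3$, and I would take $T_v := X(\Sigma_v)$, a smooth projective toric threefold. Its torus-invariant prime divisors are in bijection with the edges of $\mathcal{P}$ incident to $v$. For each edge $e = vw$, the divisor $D_{v,e} \subset T_v$ is the smooth projective toric surface associated to the link of $e$ at $v$; the matching divisor $D_{w,e} \subset T_w$ carries the same link data, so I would glue $T_v$ to $T_w$ along a torus-equivariant isomorphism $D_{v,e} \simeq D_{w,e}$. Compatibility along triangles and tetrahedra of $\mathcal{P}$ is automatic from toric functoriality at each stratum, and the result is a connected simple normal crossings variety $T$ whose dual complex is exactly $\mathcal{P}$.

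The Calabi--Yau condition follows from the toric identity $K_{T_v} + \sum_{e \ni v} D_{v,e} \sim 0$ on each component, which globalizes to $\omega_T \simeq \mathcal{O}_T$ under snc gluing with matched divisors; thus $T$ is snc Calabi--Yau. For the fundamental group, each stratum of $T$ is a smooth projective toric variety, hence simply connected, so a standard dual-complex van Kampen computation yields $\pi_1(T) \simeq \pi_1(\mathcal{D}(T)) = \pi_1(\mathcal{P}) \simeq \pi_1(M)$. The principal obstacle is the combinatorial step: producing a smooth polyhedral complex of dimension $3$ homotopy equivalent to $M$, as one must carefully order the subdivisions so the smoothing procedure terminates without altering the homotopy type; this is where the codimension-$1$ embedding hypothesis $M \hookrightarrow \rr^4$ is indispensable, since it provides the ambient thickening in which the toric-style blow-ups can be performed.
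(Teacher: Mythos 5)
Your overall architecture (combinatorial model of $M$, then toric realization as an snc Calabi--Yau) is the same as the paper's, but the proposal does not actually carry out the step that constitutes the content of the theorem. The paper does not start from an abstract smooth triangulation of $M$ and ``thicken'' it: it approximates a tubular neighborhood of $M\subset\rr^4$ by unit cubes, passes to the Freudenthal triangulation of $\rr^4$, proves that the \emph{dual} cells of this specific triangulation are genuine convex lattice polytopes (Lemma~\ref{lem:dual-decomp}), takes the boundary of the dual of the cubical approximation, and then smooths it by explicit polyhedral blow-ups (Proposition~\ref{prop:smoothing}). Your replacement --- ``perform toric-style stellar subdivisions on vertex links until they are smooth'' --- leaves the two essential difficulties untouched. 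First, a triangulation of $M$ carries no lattice data; you need every cell to be a lattice polytope and every face inclusion to be a lattice embedding, and it is precisely to obtain this that the paper works inside the Freudenthal complex. Second, the smoothing operations are \emph{global}: a blow-up of a polyhedral complex at a stratum modifies every incident cell simultaneously and need not exist at all (Definition~\ref{def:blow-up-poly}, Remark~\ref{rem:blow-up-existence}); the paper only gets existence because the Freudenthal dual has a short, explicitly classified list of non-smooth local configurations (Steps 1--4 of Proposition~\ref{prop:smoothing}, Lemma~\ref{lem:blow-up-dim-3}). Subdividing the link at one vertex changes the links at its neighbors, so ``order the subdivisions so the procedure terminates'' is a restatement of the problem, not a proof.

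Your realization step is also the dual of the paper's and loses information the paper needs. You index components by vertices and take $T_v=X(\Sigma_v)$ for the vertex-link fan; the paper indexes components by the maximal polytopes and takes the associated \emph{polarized} projective toric varieties (Proposition~\ref{prop:toric-variety-from-pol-complex}). A complete smooth fan in $\rr^3$ need not be projective, so ``$T_v$ is a smooth projective toric threefold'' is unjustified, and even with projective pieces the glued scheme is projective only if the ample classes match on overlaps --- this is exactly what the moment-polytope formalism and Proposition~\ref{prop:toric-linear} provide, and what Proposition~\ref{prop:prescribed-exceptional} consumes downstream. Likewise, ``compatibility along triangles and tetrahedra is automatic from toric functoriality'' hides a genuine cocycle condition: you must choose lattice isomorphisms $\Sigma_v/\rho_e\simeq\Sigma_w/\rho_e$ for every edge and verify their compatibility on triple overlaps; the paper's category of lattice polytopes with at most one morphism between any two objects is set up precisely so that this coherence holds. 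The Calabi--Yau adjunction and the van Kampen computation at the end are fine and match the paper.
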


To construct the previous Calabi--Yau variety, we will use ideas emanating from toric geometry.
Indeed, our Calabi--Yau snc variety will satisfy that each irreducible component $T_i$ is a projective toric variety.
In the following subsection, we explain in more detail the idea of this construction.

\subsection{Smooth polyhedral complexes}
By means of toric geometry,
a convex polytope is associated to a projective toric variety.
Thus, in order to construct a Calabi--Yau variety $T$ for which every irreducible component is toric, we need to consider complexes whose elements are convex polyhedra
and morphisms are linear isomorphisms.
In Section~\ref{sec:pol-complex}, we associate a Calabi--Yau variety $T$
to a polyhedral complex $\mathcal{P}$ satisfying some mild conditions.
If we want the Calabi--Yau variety $T$ to be snc, then we need each polyhedron in the complex to be smooth of dimension $n$
and the nerve at each polyhedron
of the complex
to be a simplex.
In simple words, we need each vertex $v$ of the complex $\mathcal{P}$ to be contained in exactly $n+1$ maximal polyhedra.
In this direction, we prove the following theorem.

\begin{introthm}\label{introthm:from-sm-poly-comp-to-lc-sing}
Let $\mathcal{P}$ be a smooth polyhedral complex of dimension $n$. There exists an $(n+1)$-dimensional log canonical singularity $(X;x)$ for which 
$\pi_1^{\rm loc}(X;x)\simeq \pi_1(\mathcal{P})$. Furthermore, for $n \leq 4$ , the singularity $(X;x)$ is isolated. 
\end{introthm}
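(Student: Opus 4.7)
The plan is to combine the polyhedral-to-algebraic dictionary of Section~\ref{sec:pol-complex} with the affine-cone construction of Koll\'ar~\cite{Kol12}. First, I would use the construction of Section~\ref{sec:pol-complex} to associate to $\mathcal{P}$ an $n$-dimensional projective snc Calabi--Yau variety $T$. By the smoothness hypothesis on $\mathcal{P}$, each maximal polyhedron $P$ is unimodular and hence corresponds to a smooth projective toric variety $T_P$; the condition that every vertex of $\mathcal{P}$ lies in exactly $n+1$ maximal polyhedra guarantees that these components glue with simple normal crossings. The anticanonical divisor on each toric component $T_P$ cancels against the sum of the adjacent components of $T$, so $K_T\sim 0$ and $T$ is Calabi--Yau.

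Second, I would show $\pi_1(T)\simeq \pi_1(\mathcal{P})$. The components $T_P$, and all of their pairwise and higher-order intersections, are smooth projective toric varieties (coming from the faces of $\mathcal{P}$), and therefore simply connected. A van Kampen induction on the number of components then identifies $\pi_1(T)$ with the fundamental group of the nerve of the covering by components, which is precisely $\mathcal{P}$.

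Third, I would apply Koll\'ar's construction from~\cite{Kol12}: choose a polarization of $T$ compatible with the polyhedral structure, form the associated affine cone (or an appropriate degeneration), and produce an $(n+1)$-dimensional lc singularity $(X;x)$ whose local fundamental group surjects onto $\pi_1(T)$. The surjection becomes an isomorphism because the kernel, generated by a small loop around the exceptional divisor $T$ in a log resolution of $(X;x)$, is killed by the simple connectedness of any toric component $T_P$: such a loop is null-homotopic inside $T_P\subset T$.

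Finally, for the isolated-singularity claim when $n\leq 4$, I would argue that the non-apex singular locus of the cone lies over the codimension-$\geq 1$ strata of $T$; in total dimension $n+1\leq 5$, a careful toric analysis shows that these loci admit a small crepant toric modification that alters neither the lc property at $x$ nor the fundamental group, so $(X;x)$ becomes isolated. The main obstacles will be upgrading Koll\'ar's surjection to an honest isomorphism (handled by the van Kampen argument above, exploiting the simple connectedness of the toric strata) and producing the isolated model in low dimension (a combinatorial/toric step that does not extend to arbitrary $n$, which is why the isolatedness is only claimed for $n\leq 4$).
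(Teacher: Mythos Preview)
Your first two steps---constructing the snc Calabi--Yau variety $T$ from $\mathcal{P}$ and identifying $\pi_1(T)\simeq\pi_1(\mathcal{P})$---match the paper's approach (Proposition~\ref{prop:toric-variety-from-pol-complex}). The isolatedness for $n\leq 4$ is also easier than you suggest: it is a direct consequence of Koll\'ar's construction (Proposition~\ref{prop:prescribed-exceptional}(5)), so no further toric modification is needed.

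The genuine gap is in your third step, where you argue that the surjection $\pi_1^{\rm loc}(X;x)\twoheadrightarrow\pi_1(T)$ is an isomorphism. The kernel is cyclic and central, generated by a loop $\gamma$ around an irreducible component of $T$ inside the ambient space $Y$. This loop lives in $Y\setminus T$, namely in the circle bundle of the normal bundle of $T$; it is \emph{not} a loop in $T$ or in any $T_P$. The simple connectedness of the toric components $T_P$ tells you nothing about whether $\gamma$ is trivial in $\pi_1(Y\setminus T)$---indeed, for a generic choice of polarization $L$ on $T$, the loop $\gamma$ will have finite nonzero order, and $\pi_1^{\rm loc}(X;x)$ will be a nontrivial central extension of $\pi_1(\mathcal{P})$ rather than $\pi_1(\mathcal{P})$ itself (compare Example~\ref{ex:spherical-dc}).

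The paper kills $\gamma$ by a completely different mechanism. Before applying Koll\'ar's construction, it performs iterated blow-ups of $T$ along carefully chosen smooth centers in two disjoint components (Definition~\ref{def:blow-up-snc-reducible-var}, Lemma~\ref{lem:ample-blow-up}), producing two smooth rational curves $P_1,P_2$ in the new exceptional loci. The polarization is then chosen as $L_1\otimes L_2^k$ so that the normal bundle of $T$ in $Y$ restricts to $\mathcal{O}(k)$ on $P_1$ and $\mathcal{O}(k+1)$ on $P_2$. The circle bundle over each $P_i$ is a lens space, and the order of $\gamma$ must divide $|\pi_1(S_i)|$ for both; since $\gcd(k,k+1)=1$, this forces $\gamma=1$. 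This coprimality trick is the missing idea in your proposal.
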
 

The previous theorem gives us a tool to construct lc singularities with a prescribed fundamental group.
However, the category of smooth polyhedral complexes is not easy to deal with.
Naively, one can consider a smooth manifold $M$ with a triangulation $\mathcal{T}$ 
and consider the Poincar\'e dual of this triangulation. This would be a rough first approximation of a smooth polyhedral complex. 
However, the elements of the Poincar\'e dual are combinatorially convex polytopes,
so they may not be actual convex polytopes.
To remedy this issue, we will start with
the Freudenthal decomposition of $\rr^4$
and prove that its Poincar\'e dual is indeed made of convex polytopes (see Proposition~\ref{lem:dual-decomp}).
Then, the same statement will hold for subcomplexes of the Freudenthal complex as well.
There is a second difficulty that still stands: 
the Poincar\'e duals, even if they are convex polytopes, they may not be smooth.
To fix this issue we mimic the strategy of resolutions of singularities.
We define a {\em blow-up} of a polyhedral complex at a stratum. 
This construction replaces the stratum with a top-dimensional polyhedron that represents the tangent directions of the complex at the stratum (see Definition~\ref{def:blow-up-poly}). However, the blow-up is not always well-defined (see Remark~\ref{rem:blow-up-existence}).
Thus, in order to apply this strategy, we will need to deal with a case-by-case analysis. In the case of dimension $3$, we can prove the following theorem.

\begin{introthm}\label{introthm:3-manifold}
Let $M$ be a $3$-manifold
that admits a smooth embedding in $\rr^4$.
There exists a $3$-dimensional smooth polyhedral complex $\mathcal{P}_M$ that is homotopic to $M$.
Furthermore, we can choose 
each of the $3$-dimensional 
polytopes in $\mathcal{P}_M$
to be one of the following: 
\begin{itemize}
    \item quadrilateral prism,
    \item associahedron,
    \item a partial edge truncation  of a partial vertex truncation of a hexagonal prism, or
    \item a partial edge truncation of a partial vertex truncation of a permutahedron.
\end{itemize}
\end{introthm}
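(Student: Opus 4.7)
The plan is to realize $M$ as the ``transverse wall'' inside a fine subdivision of the Freudenthal decomposition of $\rr^4$, extract the associated dual $3$-complex, and then repair the few non-smooth strata by iterated combinatorial blow-ups.

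First, I take the Freudenthal decomposition $\mathcal{F}$ of $\rr^4$ and a subdivision $\mathcal{F}_N$ of it that is fine enough that, after a small isotopy of $M$, the smoothly embedded $3$-manifold is transverse to $\mathcal{F}_N$ and a simplicial approximation to $M$ appears as a full codimension-one subcomplex. The reason for using Freudenthal rather than an arbitrary triangulation is that, up to its symmetry group, the star of every vertex and the link of every edge and every $2$-face of $\mathcal{F}_N$ are of a finite, very restricted set of combinatorial types; thus the local picture of $M$ at each cell of $\mathcal{F}_N$ belongs to a finite list.

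Second, I define $\mcp_M$ to be the union of the dual cells of those edges of $\mathcal{F}_N$ that pierce $M$, glued along the dual cells of the triangles and tetrahedra of $\mathcal{F}_N$ that meet $M$. By Proposition~\ref{lem:dual-decomp}, these duals are honest convex polytopes and not merely combinatorial ones, so $\mcp_M$ is a genuine polyhedral complex of dimension~$3$. A standard deformation-retraction argument, pushing each $4$-simplex in a regular neighborhood of $M$ onto its portion of $\mcp_M$, shows $\mcp_M \simeq M$.

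Third, and most substantial, I identify the combinatorial types of $3$-cells that can occur in $\mcp_M$. These are precisely the dual polytopes of the edges of $\mathcal{F}_N$, and up to the symmetry group of $\mathcal{F}$ there are only finitely many such edge-stars in $\rr^4$. None of the resulting duals is automatically smooth, so I apply the blow-up construction of Definition~\ref{def:blow-up-poly} at each non-smooth stratum, checking by means of Remark~\ref{rem:blow-up-existence} that each blow-up is well-defined and that the procedure terminates. A direct inspection of the (finitely many) outcomes shows that the $3$-cells that emerge are exactly quadrilateral prisms, associahedra, partial edge truncations of partial vertex truncations of hexagonal prisms, and partial edge truncations of partial vertex truncations of permutahedra. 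Homotopy equivalence with $M$ is preserved because each blow-up replaces a stratum with a contractible polytope and hence induces a simple-homotopy equivalence.

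The main obstacle is this last step: organising the finite case analysis so that every edge-star of $\mathcal{F}_N$ that can occur near $M$ is shown to produce, after the relevant sequence of blow-ups, a $3$-cell of one of the four listed types, and so that the blow-up sequence is compatible on overlapping strata. The finiteness and the control come from the rich symmetries of the Freudenthal decomposition; the actual identification of the four canonical shapes is then a matter of recognising the resulting polytopes as truncations of well-known $3$-polytopes.
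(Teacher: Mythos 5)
Your proposal follows essentially the same route as the paper: thicken/transversalize $M$ inside the Freudenthal decomposition of $\rr^4$, pass to the boundary of the dual complex, which consists of genuine convex polytopes by Lemma~\ref{lem:dual-decomp}, and then resolve the finitely many non-smooth local types (controlled by the symmetries of the decomposition) via the blow-ups of Definition~\ref{def:blow-up-poly}. The case analysis you defer in your third step --- verifying that each non-smooth vertex neighborhood embeds in $\qq^3$ as a strongly polytopal fan so that the blow-up actually exists, making identical choices at isomorphic neighborhoods so the remaining non-smooth edges are disjoint, and recognising the resulting $3$-cells as the four listed types --- is precisely the content of Proposition~\ref{prop:smoothing}.
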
 

In the previous theorem, truncating a face $F$ of a polyhedron $P$, means to 
replace $P$ with $P\cap H^+$ where
$H^+$ is a half-space that intersects $F$ trivially and contains all vertices of $P$ that are not contained in $F$. A {\em partial edge truncation} of $P$ is the polyhedron obtained by truncating a subset of the edges of the polyhedron $P$. Similarly, a {\em partial vertex truncation} of $P$ is the polyhedron obtained by truncating a subset of the vertices of the polyhedron $P$.

By \cite[Theorem 1]{CT88}, any closed orientable $3$-dimensional manifold admits a simple decomposition into $5$ types of polyhedra. This is a smaller family of polyhedra than the one required for our construction in Theorem~\ref{introthm:3-manifold}. This should be expected as we are imposing very strict conditions on the polyhedral complex. In forthcoming work,
we will aim to understand the resolution process of polyhedral complexes in and describe the polyhedra that are needed to approximate smooth manifolds with smooth polyhedral complexes.

The paper is organized as follows. 
In Section~\ref{subsec:prelim},
we introduce preliminaries about the singularities of the MMP
and local fundamental groups.
In Section~\ref{sec:dim2}, we study fundamental groups in complex dimension $2$ of lc singularities 
and log Calabi--Yau pairs.
In Section~\ref{sec:pol-complex}, we explain how to construct an snc CY variety from a smooth polyhedral complex, and how to construct an lc singularity from an snc CY variety.
In both cases, the constructions preserve the fundamental group.
In Section~\ref{sec:3-fold-sing},
we show that every surface group appears as the fundamental group of an lc $3$-folds singularity.
Moreover, we
show that $F_r$, with $r\geq 2$, is not the fundamental group of an isolated lc $3$-fold singularity.
In Section~\ref{sec:4-dim-lc-sing}, 
we show that every free group appears as the fundamental group of an lc $4$-dimensional singularity.
Finally, in Section~\ref{sec:e-and-q}, we give some examples and propose some questions for further research.

\subsection*{Acknowledgement}
The authors would like to thank 
Burt Totaro,
June Huh,
J\'anos Koll\'ar, 
Mirko Mauri, and 
De-Qi Zhang
for many useful comments.
Part of this work was carried out during a visit of FF to the University of Washington.
The authors wish to thank the University of Washington 
as well as Gaku Liu for his support, hospitality, and insight on polyhedral complexes.

\section{Preliminaries}
\label{subsec:prelim}

We work over the field of complex numbers $\cc$.
Given a hyperplane $H$ in $\qq^n$ defined by the equation $\sum_{i=1}^n a_ix_i=c$, 
we write $H^+:=\{(x_1,\dots,x_n)\mid \sum_{i=1}^n a_ix_i\geq c\}$
and $H^-:=\{(x_1,\dots,x_n)\mid \sum_{i=1}^n a_ix_i\leq c\}$
for the two half-spaces.
The {\em rank} of a group $G$, denoted by ${\rm rank}(G)$ is the least number of generators of $G$. 
Let $G$ be a group and $g_1,\dots,g_k\in G$ be elements.
We write $\langle g_1,\dots,g_k\rangle_n$ for the normal subgroup generated by $g_1,\dots,g_k$,
that is, the subgroup generated by all conjugates of $g_1,\dots,g_k$
in $G$. We say that this group is {\em normally generated} by the elements $g_1,\dots,g_k$.
Let $X$ be an algebraic variety and $E$ be an effective divisor. We say that a divisor $D$ on $X$ is {\em fully supported} on $E$ if $\supp(D)=\supp(E)$ holds.

\subsection{Log canonical singularities}

In this subsection, we recall the definition
of log canonical singularities. 

\begin{definition} 
{\em 
Let $X$ be a normal quasi-projective variety and $B$ be an effective $\qq$ divisor on X. A couple $(X,B)$ is said to be a {\em pair} if $K_X+B$ is $\qq$-Cartier. A pair $(X,B)$ is {\em log smooth} if $X$ is smooth and $B$ is simple normal crossing.
}
\end{definition} 

\begin{definition}
{\em 
Given a projective birational morphism $f:Y\rightarrow X$ with $Y$ normal and a pair $(X,B)$, we can define $B_Y$ the log pull-back of $B$ by the formulas 
\[
K_Y+B_Y = f^{*}(K_X+B) \quad
\text{ and } \quad  f_{*}(B_Y)=B.
\]
For a prime divisor $E\subset Y$ the {\em log discrepancy} of $(X,B)$ at E is defined to be: \[
a_{E}(X,B)=1-{\rm coeff}_E(B_Y).
\]

A pair $(X,B)$ is said to be:

\begin{enumerate}
    \item {\em log canonical}, abbreviated lc, if for every projective birational morphism $f:Y \rightarrow X$ and every prime divisor $E \subset Y$,
    we have that $a_E(X,B)\geq 0$.
    \item {\em log terminal}, also called {\em Kawamta log terminal} and abbreviated klt, if for every projective birational morphism $f:Y \rightarrow X$ and every prime divisor $E \subset Y$,
    we have that $a_E(X,B)> 0$
\end{enumerate}
}
\end{definition}

\begin{definition}
{\em 
For an lc pair $(X,B)$, an irreducible variety $Z\subset X$ is said to be a {\em log canonical center} (lc center for short) if there exists a birational morphism $f:Y\rightarrow X$ and a divisor $E\subset Y$, such that $f(E)=Z$ and $a_E(X,B)=0$.
In the previous case, the divisorial valuation $E$ is said to be a {\em log canonical place}.

For any pair $(X,B)$ there exists a largest open subset $X^{snc}\subset X$, such that
the pair $(X^{snc},B|_{X^{snc}})$ is log smooth. This locus is called the 
{\em simple normal crossing locus}
or {\em snc} locus.

A log canonical pair $(X,B)$ is {\em divisorial log terminal}, abbreviated dlt,  if all the lc centers intersect $X^{snc}$ and are given by strata of $\lfloor B\rfloor$.
}
\end{definition}

\begin{definition}
{\em 
Let $(X,B)$ be a log pair. 
Let $g\colon Y\rightarrow X$ be a 
projective birational morphism.
Let $B_Y=E+g^{-1}_*B$,
where $E$ is the reduced exceptional of $g$.
We say that $g$ is a {\em dlt modification} of $(X,B)$ if $(Y,B_Y)$ is dlt and
$K_Y+B_Y$ is g-nef.
In this case, we may also say that $(Y,B_Y)$ is a {\em dlt modification} of $(X,B)$.
}
\end{definition}

The following theorem is known as existence of dlt modifications. It is proved in~\cite[Theorem 3.1]{KK10}.

\begin{lemma}
\label{lem:dlt-mod}
Let $(X,B)$ be a log canonical pair.
Then, $(X,B)$ admits a $\qq$-factorial dlt modification $(Y,B_Y)$.
\end{lemma}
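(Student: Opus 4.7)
The plan is the classical MMP construction of a dlt modification. First, I would take a log resolution $f\colon W\to X$ of $(X,B)$ together with the support of $B$. Setting $E_1,\dots,E_k$ to be all the $f$-exceptional prime divisors and
\[
\Gamma := f_{*}^{-1}B + \sum_{i=1}^{k} E_i,
\]
the pair $(W,\Gamma)$ is log smooth, $\qq$-factorial, and in particular dlt. Writing $a_i=a_{E_i}(X,B)$ for the log discrepancies, one has
\[
K_W+\Gamma = f^{*}(K_X+B) + \sum_{i=1}^{k} a_i E_i,
\]
and the divisor $F:=\sum_i a_i E_i$ is $f$-exceptional and effective because $(X,B)$ is lc (so every $a_i\geq 0$).

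The next step is to run the $(K_W+\Gamma)$-MMP over $X$ with scaling of a suitable ample divisor. Existence of the required flips and termination (applied to this log smooth, hence klt-relatively-to-$\lfloor\Gamma\rfloor$, input) follows from BCHM-type results as employed in the proof of \cite[Theorem 3.1]{KK10}. Termination yields a relative minimal model $g\colon Y\to X$ with $(Y,\Gamma_Y)$ $\qq$-factorial dlt (both properties are preserved along $(K_W+\Gamma)$-negative steps) and $K_Y+\Gamma_Y$ $g$-nef.

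To finish, I would identify $\Gamma_Y$ with $B_Y$ in the sense of the definition in the excerpt. Since $K_Y+\Gamma_Y - g^{*}(K_X+B)$ is the pushforward of $F$, it is an effective, $g$-exceptional divisor, and the negativity lemma combined with $g$-nefness of $K_Y+\Gamma_Y$ forces this divisor to vanish. Hence every exceptional divisor surviving on $Y$ has log discrepancy zero, i.e.\ is a log canonical place, so $\Gamma_Y$ is exactly the reduced exceptional divisor of $g$ plus $g_{*}^{-1}B$. This is precisely $B_Y$, and $(Y,B_Y)$ is the desired $\qq$-factorial dlt modification.

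The main obstacle is invoking the MMP for the dlt log smooth pair $(W,\Gamma)$: one needs existence of the relevant flips and termination of the scaled MMP, which is the deep input and is exactly the content borrowed from \cite{KK10}. Everything else is bookkeeping with the negativity lemma and the standard fact that $\qq$-factoriality and dlt are preserved by the $(K+\Gamma)$-negative steps.
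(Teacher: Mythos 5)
The paper gives no proof of this lemma; it simply cites \cite[Theorem 3.1]{KK10}, and your sketch is exactly the standard argument behind that reference: log resolution, the identity $K_W+\Gamma=f^*(K_X+B)+\sum a_iE_i$ with $a_i\geq 0$ by log canonicity, a relative $(K_W+\Gamma)$-MMP over $X$ (the BCHM/special-termination input you correctly flag as the deep step), and the negativity lemma to kill the effective exceptional divisor and identify $\Gamma_Y$ with $B_Y$. The argument is correct and matches the approach the paper implicitly relies on.
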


\subsection{Local and regional fundamental group} 

In this subsection, 
we recall the definition
of the local and the regional fundamental group.

\begin{notation}
{\em
All the algebraic varieties and analytic spaces considered in this paper are path-connected.
Let $X$ be an algebraic variety and $E_1,\dots, E_r$ be prime divisors on $X$.
For each $E_i$, we choose a circle $\ell_i\simeq S^1$ that is a general fiber of the circle bundle induced by the normal bundle of $E_i$ on $X$.
We choose a base point $x_0\in X$ 
and a path $r\colon [0,1]\rightarrow X$
for which $r(0)=x_0$ and $r(t_i)\in \ell_i$ for each $i$.
We write $r_i$ for the restriction of $r$ to $[0,t_i]$
By the {\em loop around the divisor} $E_i$, we mean the loop 
$r_i^{-1}\ell_ir_i$ with starting point $x_0$. 
These induce elements in $\pi_1(X;x_0)$.
}
\end{notation}

\begin{definition} 
{\em 
Let $B=\sum_{i=1}^k b_iB_i$
be a boundary divisor, 
$B_i$ its prime components,
and $b_i$ the corresponding coefficients.
We will write 
\[
B=B_s+B''.
\]
Where $B_s=\sum (1-\frac{1}{m_i})B_i$, with $m_i \in \zz_{>0}$, such that the coefficients $b_i$ of $B_i$ in $B$, satisfy $1-\frac{1}{m_i} \leq b_i< 1-\frac{1}{m_i+1}$.
The divisor $B_s$ is called the {\em standard approximation} of $X$ and any $B$ such that $B=B_s$ is said to have {\em standard coefficients}.

For a normal singularity $x \in X$,
we can embed $(X;x)$ in a smooth ambient space $\mathbb{A}^n$. 
The {\em link of $x\in X$} 
is the complement of $x$ in 
the intersection of a small euclidean ball of $x$ in $\mathbb{A}^n$ with $X$.
It is denoted by ${\rm Link}(x)$.
The {\em local fundamental group}
of $x\in X$ is defined as:
\[
\pi_1^{\rm loc}(X;x) := 
\pi_1({\rm Link}(x)).
\] 
The previous definition 
does not depend on the choice
of the smooth ambient space.
If $(X,B;x)$ is a singularity of pairs,
then we simply set $\pi_1^{\rm loc}(X,B;x):=\pi_1^{\rm loc}(X;x)$.
}
\end{definition}

\begin{definition}\label{def:reg-fun}
{\em 
Let $(X,B)$ be a log pair
and $x\in X$ be a closed point.
For each prime component $B_i$ of
$B_s$ passing through $x$, we denote by
$n_i$ the positive integer for which
\[
{\rm coeff}_{B_i}(B_s) = 1-\frac{1}{n_i}.
\]
We denote by $\gamma_i$ a loop
in the normal circle bundle of 
\[
B_i\setminus \bigcup_{j\neq i} B_j \subset 
X\setminus \supp(B_s).
\]
For simplicity,
this loop is called a {\em loop around $B_i$}.
For any open subset $U$, 
we define the group 
\[
\pi_1(U^{\rm sm},B|_U):=
\pi_1(Y\setminus \supp(B_s))/\langle \gamma_i^{n_i} \rangle_n.
\]
Here, $U^{\rm sm}$ denotes the smooth locus of $U$ and the subscript $n$ denotes the smallest
normal subgroup generated by such elements.
The {\em regional fundamental group}
of $(X,B)$ at $x$
is defined to be the inverse limit
of $\pi_1(U^{\rm sm},B|_U)$, where $U$ runs among all analytic neighborhoods of $x$ in $X$.
}
\end{definition}

\subsection{Coregularity} In this subsection, we recall the definition of coregularity. 
The coregularity is an invariant that measures the difference between the dimension of the dual complexes and the ambient variety.
It is related to log canonical thresholds~\cite{FMP22} and the theory of complements~\cite{FMM22,FFMP22}.
We refer the reader to~\cite{Mor22} for a survey on coregularity.

\begin{definition}\label{def:reg}
{\em 
Let $(X;x)$ be a log canonical singularity.
The {\em regularity} of $(X;x)$ is defined to be
\[
{\rm reg}(X;x)=\max\{\dim \mathcal{D}(Y,B_Y) \mid 
\text{ $(X,B;x)$ is log canonical
and $(Y,B_Y)$ is a dlt modification of $(X,B;x)$}
\}.
\]
In the case that $(X;x)$ is strictly log canonical with $\{x\}$ a log canonical center, 
then, in the previous definition 
we are forced to take $B=0$
and $(Y,B_Y)$ a dlt modification of $(X;x)$.

Let $(X;x)$ be a log canonical singularity.
The {\em coregularity} of $(X;x)$ is defined to be
\[
\dim X - {\rm reg}(X;x)-1.
\]
In particular, coregularity $0$ means that we can find a $0$-dimensional stratum in $\mathcal{D}(B_Y)$ where 
$(Y,B_Y)$ is the dlt modification of $(X,B)$.
}
\end{definition}

\section{Fundamental groups in dimension 2}
\label{sec:dim2}
In this section, we study fundamental groups
of log canonical surface singularities
and of Calabi--Yau surfaces.

\subsection{Surface log canonical singularities}
\label{subsec:surf-sing}
In this subsection, we describe
the regional fundamental group
of a log canonical surface singularity $(X,B;x)$.
The following theorem
is the first result of this subsection.
It gives a bound on the number of
generators and relations
of the regional fundamental group
of a log canonical surface singularity.
Throughout this section, we will get a complete classification
of the possible fundamental groups.

The main tools are the existence of dlt modifications (see Lemma~\ref{lem:dlt-mod}) and Mumford's description of the fundamental group of a normal surface singularity~\cite{Mum61}.
The following lemma follows from~\cite[Claim in Page 10]{Mum61}.

\begin{lemma}\label{lem:Mumford}
Let $X$ be a normal surface.
Let $E_1,\dots,E_n$ be curves on $X$
so that $E_1\simeq \pp^1$.
Assume each $E_i$, with $i\geq 2$,
intersects $E_1$ at $p_i$, so that
$(X,E)$ is log smooth,
where $E:=E_1+\dots+E_n$.
Define $U$ to be a small analytic
neighborhood of $E_1$ in $X$
intersected with the complement of the support of $E$.
Then, $\pi_1(U)$ is generated
by a loop $\alpha_i$ around each $E_i$ with relations
\[
\{ [\alpha_1\alpha_i]\}_{2\leq i\leq n} \quad \text{  and  }
\quad
\alpha_2\alpha_3 \dots \alpha_n \alpha_1^{E_1^2}.
\]
\end{lemma}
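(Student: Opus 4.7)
The plan is to realize a sufficiently small analytic tubular neighborhood $N$ of $E_1$ in $X$ as a holomorphic disk bundle $\pi\colon N\to E_1\simeq\pp^1$ with zero section $E_1$ and Euler number $E_1^2$, and then compute $\pi_1(U)$ by a Seifert--van Kampen argument. Under this identification, each $E_i$ with $i\geq 2$ meets $N$ in a smooth holomorphic disk transverse to $E_1$ at $p_i$, so $U$ is, up to homotopy, the complement in the total space of a line bundle over $\pp^1$ of its zero section together with $n-1$ transverse fibers.

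For the decomposition, choose pairwise disjoint small disks $D_i\subset E_1$ around $p_i$ for $i=2,\dots,n$, set $S:=E_1\setminus\bigsqcup_{i\geq 2} D_i$, and let $V:=\pi^{-1}(S)\setminus E_1$ and $W_i:=\pi^{-1}(D_i)\setminus(E_1\cup E_i)$. Since $S$ deformation retracts onto a bouquet of $n-2$ circles, any complex line bundle over $S$ is trivial; hence $\pi^{-1}(S)\simeq S\times\Delta$ and $V\simeq S\times S^1$ with
\[
\pi_1(V)=\langle \beta_2,\dots,\beta_n,\alpha_1\mid \beta_2\cdots\beta_n,\; [\alpha_1,\beta_i]\rangle,
\]
where $\beta_i$ is a loop around $p_i$ in $E_1$ and $\alpha_1$ is the fiber circle (a loop around $E_1$). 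In local coordinates $(u,v)$ on $\pi^{-1}(D_i)\cong D_i\times\Delta$ with $E_1=\{v=0\}$ and $E_i=\{u=0\}$, one gets $W_i\simeq S^1\times S^1$ with $\pi_1(W_i)=\langle \alpha_1,\alpha_i\mid[\alpha_1,\alpha_i]\rangle$, and each overlap $V\cap W_i$ retracts onto the torus $\partial D_i\times S^1$.

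The decisive step is matching generators on the overlap tori. The trivialization of $\pi$ over $S$ and the local coordinate trivialization over $D_i$ differ by a transition function $\phi_i\colon\partial D_i\to\cc^*$; by the standard \v{C}ech computation of $c_1$, these satisfy $\sum_{i=2}^n\deg(\phi_i)=E_1^2$. Under this change of trivialization, a horizontal lift of $\beta_i$ in $V$ is carried to $\alpha_i\cdot\alpha_1^{\deg(\phi_i)}$ in $W_i$, while $\alpha_1$ is preserved. Seifert--van Kampen then yields
\[
\pi_1(U)=\big\langle \alpha_1,\dots,\alpha_n\;\big|\;[\alpha_1,\alpha_i]\ (i\geq 2),\; \prod_{i=2}^n\alpha_i\alpha_1^{\deg(\phi_i)}\big\rangle,
\]
and centrality of $\alpha_1$ (which follows from the commutator relations) collapses the last relator to $\alpha_2\alpha_3\cdots\alpha_n\alpha_1^{E_1^2}$. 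The only nontrivial point is the sign bookkeeping in the Euler-number identity: one must fix complex orientations so that the sum of transition degrees yields $+E_1^2$ rather than $-E_1^2$, which is the standard matching between the first Chern class of the normal bundle of $E_1$ and its divisor-theoretic self-intersection.
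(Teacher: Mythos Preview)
Your proof is correct and is essentially the classical computation: the paper does not give its own proof of this lemma but simply records it as following from Mumford's original argument \cite[Claim on p.~10]{Mum61}. Your Seifert--van Kampen decomposition of the punctured disk bundle over $\pp^1$, together with the \v{C}ech identification $\sum_i\deg(\phi_i)=E_1^2$, is exactly the standard route and matches what Mumford does; there is nothing to add.
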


The following lemma gives an enhanced version of the main result of~\cite{Mum61} to the case of log pairs.
The idea is to treat the boundary as a codimension one orbifold singularity. 

\begin{lemma}\label{lem:Mumford-pairs}
Let $(X,B;x)$ be a singularity
of a $2$-dimensional log pair.
Let $f\colon (Y,B_Y)\rightarrow (X,B)$ be a log resolution
whose exceptional locus
is a tree of rational curves.
Then, the regional fundamental group
$\pi_1^{\rm reg}(X,B;x)$ is generated by:
\begin{enumerate}
    \item[(i)] a loop $\alpha_i$ around every prime exceptional divisor $E_i$ in $Y$, and 
    \item[(ii)] a loop $\gamma_i$ around every prime divisor in $f^{-1}_*(B_s)$,
\end{enumerate}
with the following relations:
\begin{enumerate}
    \item loops around intersecting divisors commute,
    \item 
    the relation
    $\alpha_1\dots \alpha_m\gamma_1 \dots \gamma_{m'}\alpha_i^{E_i^2}$, 
    for every divisor $E_i$ 
    intersecting $E_1,\dots,E_m$
    and $B_1,\dots,B_{m'}$ in
    $f^{-1}_*(B_s)$, and
    \item 
    the relation $\gamma_i^m$,
    for every prime divisor $B_i$
    of $f^{-1}_{*}(B_s)$ with coefficient $1-\frac{1}{m}$,
\end{enumerate}
 \end{lemma}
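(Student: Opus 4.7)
The plan is to transfer the computation to the resolution $Y$ and then assemble the regional fundamental group by an iterated Van Kampen argument, in which Lemma~\ref{lem:Mumford} serves as the local model around each exceptional curve. First, let $U$ be a small analytic neighborhood of $x$ in $X$, set $V := f^{-1}(U)$, and let
\[
V^\circ \;:=\; V\setminus\bigl(\supp(E)\cup\supp(f^{-1}_*B_s)\bigr),
\]
where $E$ denotes the reduced exceptional divisor. Since $f$ is an isomorphism outside the exceptional locus, it restricts to a biholomorphism $V^\circ \xrightarrow{\sim} U^{\rm sm}\setminus\supp(B_s)$, so Definition~\ref{def:reg-fun} gives
\[
\pi_1^{\rm reg}(X,B;x) \;=\; \pi_1(V^\circ)\big/\bigl\langle \gamma_i^{n_i}\bigr\rangle_n,
\]
which already accounts for the relations in item (3). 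It remains to present $\pi_1(V^\circ)$ with generators (i), (ii) and relations (1), (2).

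For the local analysis, for each prime exceptional component $E_i$ I would fix a small tubular neighborhood $W_i \subset V$ chosen so that $W_i$ meets only those components of $\supp(E)\cup\supp(f^{-1}_*B_s)$ that actually cross $E_i$, and so that whenever $E_i\cap E_j$ is a node one has that $W_i\cap W_j$ is a bidisc neighborhood of that node (and $W_i\cap W_j=\emptyset$ otherwise). Set $W_i^\circ := W_i\setminus(\supp(E)\cup\supp(f^{-1}_*B_s))$. Applying Lemma~\ref{lem:Mumford} to $E_i$, with transverse components of $f^{-1}_*B_s$ playing exactly the role of the auxiliary curves $E_2,\dots,E_n$ there (the proof of that lemma uses only transversality of the intersections), yields a presentation of $\pi_1(W_i^\circ)$ by a loop $\alpha_i$ around $E_i$ together with a loop around each transverse component, subject to the commutation relations in (1) and the single Euler-class relation in (2).

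I would then iterate Van Kampen along the dual graph of $E$. Each intersection $W_i^\circ \cap W_j^\circ$ deformation retracts onto $S^1\times S^1$ and contributes only the commutation $[\alpha_i,\alpha_j]=1$, which is already recorded in both local pieces. Because the dual graph is a tree by hypothesis, the gluing pattern contains no cycles, so the Van Kampen iteration introduces no relations beyond those coming from each $W_i^\circ$; amalgamating everything produces exactly the presentation of $\pi_1(V^\circ)$ generated by the $\alpha_i$ and $\gamma_k$ with relations (1) and (2), and combining with the quotient by $\langle \gamma_i^{n_i}\rangle_n$ gives (3). The main technical obstacle is the careful bookkeeping of base points and connecting paths inside $V$: one must identify the locally defined loops in each $W_i^\circ$ with the globally chosen loops from the notation conventions in Section~\ref{subsec:prelim}, and verify that the Euler-class relation is written with consistent orientation conventions. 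Once this is taken care of, the tree hypothesis on the exceptional locus is precisely what prevents any hidden relations from appearing, reducing the argument curve by curve to Lemma~\ref{lem:Mumford}.
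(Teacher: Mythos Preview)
Your proposal is correct and follows essentially the same approach as the paper: transfer to the resolution via the identification $\pi_1^{\rm reg}(X,B;x)\simeq \pi_1(V^\circ)/\langle\gamma_i^{n_i}\rangle_n$, cover $V^\circ$ by punctured tubular neighborhoods of the exceptional curves, apply Lemma~\ref{lem:Mumford} locally, and amalgamate by Seifert--Van Kampen using that the overlaps are homotopy tori $(\mathbb{D}^*)^2\simeq S^1\times S^1$ and that the dual graph is a tree. The paper's proof is organized identically, with only cosmetic differences in notation.
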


\begin{proof}
Note that as the singularity is isolated, the local and regional fundamental groups of $(X,B,x)$ coincide. Observe that a neighborhood of $x \in X$ is isomorphic to a neighborhood of the exceptional locus, 
and the coefficients in $B_s$ are the same as those in $f^{-1}_{*}(B_s)$. 
We obtain that 
\begin{equation}\label{eq:isom-groups}
\pi_1^{\rm reg}(X,B;x)=\pi_1^{\rm reg}(U,f^{-1}_{*}(B_s)|_{U})=\pi_1(U\backslash B_s)/\langle {\gamma_i^{n_i}}\rangle. 
\end{equation} 
Here, $U$ is a small analytic neighborhood of the exceptional locus in $Y$ with the exceptional locus removed.
To compute the rightmost  fundamental group in~\eqref{eq:isom-groups},
we apply the Seifert-Van Kampen Theorem to an open covering of  $U':=U\backslash B_s$.

For each exceptional divisor $E_i$,
we define $U_i$ to be $U'$ intersected with a small analytic neighborhood of $E_i$. 
Hence, we have that 
\[
U'=\bigcup_{i} U_i.
\]
As the exceptional divisors form a tree, we can apply Seifert-Van Kampen Theorem for pairs to obtain the free product of all the $\pi_1(U_i)$ modulo the amalgamation of the subgroups $\pi_1(U_i\cap U_j)$.
By Lemma~\ref{lem:Mumford}
the group $\pi_1(U_i)$ is generated by:
\begin{itemize}
    \item a loop around $E_i$, and 
    \item loops around each prime divisor (exceptional or in $f^{-1}_*B_s$) intersecting $E_i$, 
\end{itemize}
with relations 
\[
\{[\alpha_i\alpha_j]\}_j,
\quad 
\{[\alpha_i\gamma_j]\}_j, 
\quad 
\text{ and }
\quad 
\alpha_1\dots \alpha_m\gamma_1 \dots \gamma_{m'}\alpha_i^{E_i^2}.
\] 
If $E_i$ and $E_j$ intersect, then $U_i \cap U_j$ is homotopic to $(\mathbb{D}^*)^2$. So, it has fundamental group generated by loops around $E_i$ and $E_j$ that commute. Therefore, the amalgamation does not introduce any further relations between the loops around the divisors. Hence,  $\pi_1(U')$ is generated by loops around the divisors
with the relations
\[
[\alpha_i\alpha_j]_{i,j}, 
\quad 
[\alpha_i\gamma_j]_{i,j}, 
\quad  
\text{ and }
\quad 
\alpha_1\dots \alpha_m\gamma_1 \dots \gamma_{m'}\alpha_i^{E_i^2}.
\] 
Finally, we get the third relation
$\gamma_i^{m_i}$ by taking the quotient in the definition of the regional fundamental group of a pair.
\end{proof}

\begin{corollary}\label{cor:smooth-chain}
Let $(X,B;x)$ be a singularity of a 2-dimensional log pair, with $B_s=0$. Let $f:(Y,B_Y) \rightarrow (X,B)$ be a log resolution whose exceptional locus is a chain of $m \geq 2$ rational curves. Then, the regional fundamental group $\pi_1^{\rm reg}(X,B;x)$ 
is a finite cyclic group.
\end{corollary}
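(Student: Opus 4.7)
The plan is to apply Lemma~\ref{lem:Mumford-pairs} directly and then exploit the linear structure of the chain to eliminate generators one by one. Since $B_s=0$, the presentation only involves the loops $\alpha_1,\dots,\alpha_m$ around the exceptional curves $E_1,\dots,E_m$ (labeled consecutively along the chain). The relations read: $[\alpha_i,\alpha_{i+1}]=1$ for adjacent pairs, the boundary relation
\[
\alpha_2\,\alpha_1^{E_1^2}=1
\]
at the end $E_1$, the symmetric one $\alpha_{m-1}\,\alpha_m^{E_m^2}=1$ at $E_m$, and the interior relations
\[
\alpha_{i-1}\alpha_{i+1}\,\alpha_i^{E_i^2}=1 \qquad (2\leq i\leq m-1).
\]

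The first step is to show that $\pi_1^{\rm reg}(X,B;x)$ is cyclic, generated by $\alpha_1$. This is an induction: the endpoint relation at $E_1$ gives $\alpha_2=\alpha_1^{-E_1^2}$, so $\alpha_2\in\langle\alpha_1\rangle$. Assuming $\alpha_{i-1},\alpha_i\in\langle\alpha_1\rangle$, the interior relation at $E_i$ forces $\alpha_{i+1}=\alpha_{i-1}^{-1}\alpha_i^{-E_i^2}\in\langle\alpha_1\rangle$. Since every $\alpha_i$ is now a power of $\alpha_1$, the commutation relations become trivially satisfied and the group is generated by a single element.

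The remaining unused relation is the endpoint relation at $E_m$: substituting the expressions $\alpha_{m-1}=\alpha_1^{a_{m-1}}$ and $\alpha_m=\alpha_1^{a_m}$ obtained inductively, this relation becomes
\[
\alpha_1^{\,a_{m-1}+a_m E_m^2}=1.
\]
A short calculation (or a determinant expansion along the tridiagonal intersection matrix of the chain) identifies the exponent, up to sign, with the determinant of the negative of the intersection matrix $(-E_i\cdot E_j)_{i,j}$. Since the intersection form on the exceptional locus of a resolution of a normal surface singularity is negative definite by Mumford, this determinant is a nonzero integer $N$. Therefore $\alpha_1^N=1$, and $\pi_1^{\rm reg}(X,B;x)$ is a cyclic group of finite order dividing $|N|$.

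The one step requiring slight care is the identification of the exponent with the determinant of the intersection matrix; this is a standard tridiagonal recursion and is the only place where negativity of the intersection form enters. Everything else is purely formal manipulation of the presentation handed to us by Lemma~\ref{lem:Mumford-pairs}.
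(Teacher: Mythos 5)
Your proof is correct and follows essentially the same route as the paper's: apply Lemma~\ref{lem:Mumford-pairs}, use the endpoint relation at $E_1$ together with the interior relations to express every $\alpha_i$ as a power of $\alpha_1$ (the paper phrases this recursion via Hirzebruch--Jung continued fractions rather than a tridiagonal determinant expansion), and read off the order of the cyclic group from the remaining relation at $E_m$. Your explicit appeal to the negative definiteness of the exceptional intersection matrix to conclude that the final exponent is nonzero is a point the paper leaves implicit, and is a worthwhile addition.
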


\begin{proof}
Call the exceptional divisors $E_1, \ldots, E_m$.
By Lemma~\ref{lem:Mumford-pairs}, we have that
\begin{align*}
\pi_1^{\rm reg}(X,B;x) \simeq \langle x_1,\ldots,x_m  \mid   x_2x_1^{-r_1},x_1x_3x_2^{-r_2},\ldots,x_{m-1}x_m^{-r_m} \rangle, 
\end{align*}
where $x_i$ is the loop around $E_i$ and $-r_i=E_i^2$. We can write
    \[\frac{b_{i}}{b_{i-1}}=[r_{i-1},r_{i-2},\ldots,r_1]\]
for the Hirzebruch-Jung continued fractions, we obtain $x_i=x_1^{b_i}$ inductively. Therefore, the following sequence of isomorphisms holds:
\begin{align*}
\pi_1^{\rm reg}(X,B;x)\simeq  \langle x_1\mid  x_1^{b_{m-1}}(x_1^{b_m})^{-r_m}\rangle \simeq \langle x_1 \mid  x_1^{r_mb_m-b_{m-1}}\rangle \simeq \zz /(r_mb_m-b_{m-1}) \zz.
\end{align*}
This implies that the regional 
fundamental group
is a finite cyclic group.
\end{proof}

\begin{lemma}\label{lem:hirzebruch-jung-group}
Consider the group
\begin{align*}
    G &:= \langle \alpha_1,\ldots,\alpha_t,x_1,\ldots,x_m \mid \alpha x_{2}x_1^{-m_1},\{x_{i-1}x_{i+1}x_{i}^{-m_i}\}_{1 <i < m},J \rangle, 
\end{align*}
where $J$ is a set of relations 
and $\alpha$ is some element generated by the $\alpha_i$'s.
Then 
\begin{align*}
   G& \simeq \langle \alpha_1,\ldots,\alpha_t,x_1 \mid J'\rangle, 
\end{align*}
where $J'$ are the relations in $J$
with $x_i$ replaced by $\alpha^{a_i}x_1^{b_i}$
for $1<i\leq m$.
\end{lemma}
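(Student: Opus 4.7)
The plan is to iteratively eliminate the generators $x_2,\ldots,x_m$ in favor of $\alpha$ and $x_1$ by Tietze transformations applied to the chain relations. The initial relation $\alpha x_2 x_1^{-m_1}=1$ solves as $x_2=\alpha^{-1}x_1^{m_1}$, so I would set $(a_1,b_1):=(0,1)$ and $(a_2,b_2):=(-1,m_1)$, making $x_2=\alpha^{a_2}x_1^{b_2}$. For $1<i<m$ the relation $x_{i-1}x_{i+1}x_i^{-m_i}=1$ rearranges as $x_{i+1}=x_{i-1}^{-1}x_i^{m_i}$, expressing $x_{i+1}$ recursively in terms of the two preceding generators.

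I would then prove by induction on $i$ that $x_i=\alpha^{a_i}x_1^{b_i}$ in $G$, where the integer sequences satisfy the Hirzebruch--Jung-type recurrence
\[
a_{i+1}=m_i a_i-a_{i-1},\qquad b_{i+1}=m_i b_i-b_{i-1}.
\]
The inductive step requires merging $(\alpha^{a_{i-1}}x_1^{b_{i-1}})^{-1}$ with $(\alpha^{a_i}x_1^{b_i})^{m_i}$ into a single monomial in $\alpha$ and $x_1$, which is possible exactly when $\alpha$ and $x_1$ commute in $G$. This commutativity must be present (implicitly) among the relations $J$; in the geometric setting of Lemma~\ref{lem:Mumford-pairs} it is automatic, since $\alpha$ is a product of loops around divisors meeting $E_1$ and such loops commute with the loop $x_1$ around $E_1$. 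A short secondary induction then shows that $\alpha$ commutes with every $x_i$, which is exactly what makes the recursive simplification collapse into the monomial form.

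Once the identities $x_i=\alpha^{a_i}x_1^{b_i}$ are in place, I would apply Tietze transformations one at a time for $i=2,\ldots,m$: remove the generator $x_i$ together with its defining chain relation, and substitute $\alpha^{a_i}x_1^{b_i}$ for every remaining occurrence of $x_i$ in the relations of $J$. After exhausting all of these, what remains is exactly the presentation
\[
G\simeq\langle\alpha_1,\ldots,\alpha_t,x_1\mid J'\rangle
\]
asserted in the statement. The main obstacle is the monomialization in the inductive step; everything else is routine presentation bookkeeping that mirrors the continued-fraction calculation used in the proof of Corollary~\ref{cor:smooth-chain}.
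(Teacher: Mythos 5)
Your proposal is correct and follows essentially the same route as the paper: the paper's proof likewise defines $a_i,b_i$ via the Hirzebruch--Jung continued fractions $\frac{b_{i+1}}{b_i}=[m_i,\ldots,m_1]$ and $\frac{a_{i+1}}{a_i}=[m_i,\ldots,m_2]$ (equivalently, your recurrence $a_{i+1}=m_ia_i-a_{i-1}$, $b_{i+1}=m_ib_i-b_{i-1}$) and inductively eliminates $x_2,\ldots,x_m$ by Tietze transformations. You additionally make explicit the commutativity of $\alpha$ with $x_1$ needed to collapse $x_{i-1}^{-1}x_i^{m_i}$ into a single monomial, a hypothesis the paper leaves implicit (it holds in every application via the commutation relations in $J$ coming from Lemma~\ref{lem:Mumford-pairs}), which is a worthwhile clarification rather than a divergence in method.
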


\begin{proof}
We can write
   \begin{align*}
   \frac{b_{i+1}}{b_i} &= [m_i,m_{i-1},\ldots,m_1],\text{ and } \\
   \frac{a_{i+1}}{a_i} &= [m_i,m_{i-1},\ldots,m_2],
   \end{align*}
for the Hirzebruch-Jung continued fraction. Inductively, we can obtain 
$x_i=\alpha^{a_i}x_1^{b_i}$. Therefore, we obtain the required presentation of the group.
\end{proof}

In the following lemma, we will classify the possible exceptional divisors
of dlt modifications 
of log canonical surfaces singularities. 
We also prove a statement about the singular locus of the dlt modification. 
The first part of the following lemma is well-known to the experts (see, e.g.,~\cite{Ale93}).

\begin{lemma}\label{lem:dual-graph}
Let $(X,B;x)$ be a log canonical surface singularity.
Let $\phi:(Z,B_Z)\rightarrow (X,B)$ be a dlt modification.
Let $E_1,\dots,E_m$ be the exceptional prime divisors. Define $E \text{ to be } E_1+\dots+E_{m}$, $\Delta_i$ to be the different for
the adjunction of $K_Z+B_Z$ to $E_i$,
and $\hat{B}$ to be the strict transform of $B$.
Then, $E$ is one of the following:
\begin{enumerate}
    \item[(i)] a chain of rational curves,
    \item[(ii)] a cycle of rational curves, or
    \item[(iii)] an elliptic curve.
\end{enumerate}
Furthermore, the following statements hold:
\begin{enumerate}
    \item if $E$ is a chain of rational curves, then $(Z,B_Z)$ is log smooth in a neighborhood of $E_2+\dots+ E_{m-1}$,
    \item if $E$ is an elliptic curve or a cycle of rational curves, then $B_Z=E$ and 
    $(Z,B_Z)$ is log smooth
    in a neighborhood of $E$,
    \item if $m=1$ and $E$ is a rational curve, then $E_1 \cdot \hat{B}+\deg\Delta_1=2$, and
    \item if $m\geq 2$ and $E$ is a chain of rational curves, then 
    \[
    E_1 \cdot \hat{B}+\deg\Delta_1=E_m \cdot \hat{B}+\deg\Delta_m=1.
    \] 
\end{enumerate}
\end{lemma}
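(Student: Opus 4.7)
The plan is to extract every claim from adjunction applied to each exceptional prime, together with the crepant property of the dlt modification. Since $(X,B;x)$ is log canonical, $K_Z+B_Z=\phi^*(K_X+B)$, so $(K_Z+B_Z)\cdot E_i=0$ for every $i$. Adjunction then gives
\[
\deg \mathrm{Diff}_{E_i}(B_Z-E_i)=2-2g(E_i),
\]
and because $(Z,B_Z)$ is dlt the curve $E_i$ is smooth and the different is effective, forcing $g(E_i)\in\{0,1\}$. I would decompose
\[
\mathrm{Diff}_{E_i}(B_Z-E_i)=\Delta_i+\hat{B}|_{E_i}+\sum_{j\neq i}(E_j\cdot E_i),
\]
where $\Delta_i$ records the singularities of $Z$ along $E_i$ and the remaining terms record the intersections with the other components of $B_Z$.

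I would then classify $E$ via its dual graph. The dlt hypothesis places every $0$-stratum of $\lfloor B_Z\rfloor$ inside the snc locus, so each intersection $E_i\cap E_j$ is a single transversal node at a smooth point of $Z$, contributing exactly $1$ to the different on each side. If some $g(E_i)=1$, the degree of the different on $E_i$ is zero, so $E_i$ is disjoint from every other component of $B_Z$ and $\Delta_i=0$; connectedness of the exceptional fibre over a log canonical singularity then forces $m=1$ with $E$ a smooth elliptic curve, giving (iii). If every $E_i$ is rational, then $\sum_{j\neq i}E_i\cdot E_j\leq 2$, so every vertex of the dual graph has degree at most two, and a connected graph of maximum degree two is either a chain or a cycle, giving (i) and (ii).

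The quantitative parts (1)--(4) then follow by bookkeeping with the degree formula. If $m=1$ and $E_1$ is rational, no chain-neighbour contributes, so $E_1\cdot\hat B+\deg\Delta_1=2$, which is (3). For a chain with $m\geq 2$, an endpoint has one chain-neighbour contributing $1$ and the remaining budget goes to $\hat B$ and $\Delta_i$, giving (4); an interior $E_i$ has two chain-neighbours that consume the full budget, so $E_i\cdot\hat B=0$ and $\Delta_i=0$, which combined with the transversality at the two nodes (already known from dlt) produces a log smooth neighbourhood of each interior $E_i$, proving (1). In the cycle and elliptic cases every vertex either exhausts its budget on neighbours in $E$ or has no budget at all, so $\hat B\cdot E_i=0$ and $\Delta_i=0$ for each $i$; any component of $\hat B$ meeting the fibre over $x$ must meet some $E_i$, so locally $\hat B=0$, $B_Z=E$, and $Z$ is smooth along $E$, yielding (2). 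The only delicate step is the clean decomposition of the different and the identification of $\Delta_i$: both rest on the standard fact that at a $0$-stratum of a dlt pair the surface is smooth and the two boundary components meet transversally, so that the contribution of an intersection to the different is exactly the integer $E_i\cdot E_j=1$.
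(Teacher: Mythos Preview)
Your proposal is correct and follows essentially the same approach as the paper: both use crepancy of the dlt modification to get $(K_Z+B_Z)|_{E_i}\sim_\qq 0$, apply adjunction to bound $g(E_i)\leq 1$, split the different into the singularity contribution $\Delta_i$ plus the restrictions of $\hat B$ and the other $E_j$, and then read off the dual-graph classification and the numerical statements (1)--(4) from the degree budget $2-2g(E_i)$. The only cosmetic difference is that the paper carries out the degree bookkeeping case by case in a slightly more explicit way, whereas you summarise it; the underlying argument is identical.
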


\begin{proof}
By the definition of $\phi$, we have that $K_Z+B_Z=\phi^*(K_X+B)$ and $(Z,B_Z)$ is a dlt pair.
Hence, $K_Z+B_Z\sim_{\qq,X} 0$.

By adjunction, we get that 
\[
0 \sim_{\qq} (K_Z+B_Z) |_{E_i}=K_{E_i}+\Delta_i+(B_Z-E_i)|_{E_i}.
\]
Therefore, $\deg{K_{E_i}}\leq 0$, so all exceptional divisors are rational curves or elliptic curves. 

If $\deg{K_{E_i}}=0$, then $\deg{(B_Z-E_i)|_{E_i}}=0$, so the vertex corresponding to $E_i$ in the dual graph of $E$ has no edges.
Since $E$ is the fiber of $\phi$ over $x$, $E$ is connected.
So, its dual graph is connected as well.
Hence, $E=E_1$ is the only exceptional divisor.
In this case, we have that  $B_Z=E_1+\hat{B}$.
Thus, by adjunction, we have that
\[
0=\deg{(K_Z+B_Z)|_{E_1}}=\deg{\Delta_1}+\deg{(B_Z-E_1)|_{E_1}}.
\]
Therefore, $E_1\cdot \hat{B}+\deg\Delta_1=0$. Consequently, in a neighborhood of $E$ the pair $(Z,B_Z)$ is log smooth and $B_Z=E$.

We are only left with the case in which $\deg{K_{E_i}}=-2$ for each $E_i$.
In this case,
we get that $\deg(B_Z-E_i)|_{E_i}\leq 2$, hence any $E_i$ intersects at most two other divisors in $E$. As $E$ is connected, it has to be a chain or a cycle of rational curves.

If $E$ is a cycle of rational curves, then by adjunction to $E_k$, we have that 
\[
0=\deg{ K_Z+B_Z |_{E_k}}=-2+\deg{\Delta_k}+2+\deg{(B_Z-E_k-E_{k-1}-E_{k+1})|_{E_k}} \geq 0.
\]
So, there are no singularities
along $E_k$.
Furthermore, $E_k$ only intersects $B_Z$ at $E_{i-1}$ and $E_{i+1}$,
and this intersection is transversal. 

If we have a chain of rational curves, then for any $E_k$ with $k \not\in \{ 1,m\}$, we have 
\[
0= \deg{ K_Z+B_Z |_{E_k}}=-2+\deg{\Delta_k}+2+\deg{(B_Z-E_k-E_{k-1}-E_{k+1})|_{E_k}} \geq 0.
\]
So, there are no singularities along $E_k$.
Furthermore, $E_k$ only intersects $B_Z$ at $E_{i-1}$ and $E_{i+1}$,
and this intersection is transversal.

If $m=1$, then $B_Z=E_1+\hat{B}$. By adjunction to $E_1$, we get that  
\[
0=\deg{(K_Z+B_Z)|_{E_1}}=-2+\deg{\Delta_1}+\deg{(B_Z-E_1)|_{E_1}}.
\] Hence, $E_1\cdot \hat{B}+\deg\Delta_1=2$.

If $m \geq 2$, then we
have the following sequence of equalities:
\[
0=\deg{(K_Z+B_Z)|_{E_1}}=-2+\deg{\Delta_1}+\deg{(B_Z-E_1)|_{E_1}}=-2+\deg{\Delta_1}+1+\deg{(B_Z-E_1-E_2)|_{E_1}}.
\]
We deduce that $E_1 \cdot \hat{B}+\deg\Delta_1=1$. We can proceed analogously for $E_m$.
This finishes the proof.
\end{proof}

\begin{proposition}\label{prop:caso-eliptic}
Let $(X,B;x)$ be a log canonical surface singularity.
Let $\phi:(Z,B_Z)\rightarrow (X,B)$ be a dlt modification whose exceptional locus $E$ is an elliptic curve. Then,
the following isomorphisms hold
\[
\pi_1^{\rm loc}(X,B;x) \simeq \pi_1^{\rm reg}(X,B;x) \simeq  \pi_1^{\rm reg}(Z,B_Z).
\]
Furthermore, $\pi_1^{\rm reg}(X,B;x)$
is of the form $\zz \rtimes \zz^2$.
\end{proposition}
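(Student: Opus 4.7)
The plan is to reduce the statement to a standard topological computation on a circle bundle over a $2$-torus, using the structural information provided by Lemma~\ref{lem:dual-graph}(ii). In the elliptic case, that lemma tells us that $E$ is the only exceptional divisor, $B_Z=E$, and $(Z,B_Z)$ is log smooth in a neighborhood of $E$. In particular, the strict transform $\hat{B}$ is empty near $E$, so $B=0$ in an analytic neighborhood of $x\in X$, and hence $B_s=0$ near $x$. Reading the definitions of the local and regional fundamental groups with $B_s=0$ then yields
\[
\pi_1^{\rm loc}(X,B;x)=\pi_1^{\rm loc}(X;x)=\pi_1^{\rm reg}(X,B;x).
\]

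To obtain the remaining isomorphism, I would use that $\phi$ is a biholomorphism outside $E$, so a small analytic punctured neighborhood of $x$ in $X$ is biholomorphic to $U\setminus E$, where $U$ is a tubular neighborhood of $E$ in $Z$. Since $E$ has coefficient one in $B_Z$ and is therefore absent from the standard approximation $B_s$, the neighborhood limit defining $\pi_1^{\rm reg}(Z,B_Z)$ still selects the complement of $E$, giving $\pi_1^{\rm reg}(Z,B_Z)\simeq \pi_1(U\setminus E)$.

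For the structural statement, $Z$ is smooth along $E$ by log smoothness, so $U$ is diffeomorphic to the total space of the normal bundle $N:=N_{E/Z}$, and $U\setminus E$ deformation retracts onto the unit circle bundle $S(N)\to E$. This is an oriented $S^1$-bundle over the torus $E\simeq T^2$ whose Euler number equals the self-intersection $E^2$. The structure group $SO(2)$ is abelian, so the fiber class lies in the center of $\pi_1(S(N))$, and the homotopy long exact sequence of the fibration gives a central extension
\[
1\rightarrow \zz\rightarrow \pi_1(S(N))\rightarrow \zz^2\rightarrow 1
\]
with kernel generated by the fiber loop. This realises $\pi_1^{\rm reg}(X,B;x)$ as an extension of $\zz^2$ by $\zz$, which is the form $\zz\rtimes\zz^2$ in the statement.

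The main subtlety is bookkeeping around the coefficient-one component $E$: because $B_s$ excludes integer-coefficient components, I need to check that $E$ is nevertheless removed topologically when passing to neighborhoods in the definition of $\pi_1^{\rm reg}(Z,B_Z)$. Once this identification is in place, the remaining input is the classical computation of $\pi_1$ of a principal $S^1$-bundle over the $2$-torus.
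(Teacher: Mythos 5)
Your proposal is correct and follows essentially the same route as the paper: use Lemma~\ref{lem:dual-graph} to see that $E$ is a smooth elliptic curve with $B_Z=E$ and $(Z,B_Z)$ log smooth near $E$, identify the punctured neighborhood of $x$ with the $S^1$-bundle $S(N_{E/Z})\to E\simeq \mathbb{T}^2$, and read off the central extension $1\to\zz\to\pi_1\to\zz^2\to 1$ from the homotopy exact sequence. You are in fact slightly more careful than the paper, which does not spell out the identifications $\pi_1^{\rm loc}\simeq\pi_1^{\rm reg}(X,B;x)\simeq\pi_1^{\rm reg}(Z,B_Z)$ or the coefficient-one bookkeeping, and both arguments interpret ``of the form $\zz\rtimes\zz^2$'' in the same loose sense of an extension of $\zz^2$ by a central $\zz$.
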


\begin{proof}
By Lemma~\ref{lem:dual-graph},
we know that $E$ is a smooth elliptic curve and $B_Z$ contains no other curve.
Therefore, $\pi_1^{\rm reg}(Z,B_Z)$ is the fundamental group of an $S^1$-bundle over the elliptic curve. So, it fits in the exact sequence 
\[
1 \rightarrow \pi_1(S^1)\simeq \mathbb{Z} \rightarrow \pi_1^{\rm reg}(Z,B_Z) \rightarrow \pi_1(\mathbb{T}^2)\simeq \mathbb{Z}^2 \rightarrow 1.
\]
Hence, it has a presentation with 3 generators.
\end{proof}

\begin{proposition}\label{prop:caso-cycle-rational}
Let $(X,B;x)$ be a log canonical surface singularity.
Let $\phi:(Z,B_Z)\rightarrow (X,B)$ be a dlt modification whose exceptional locus $E$ is a cycle of rational curves. Then, we have that 
\[ \pi_1^{\rm loc}(X,B;x)
\simeq \pi_1^{\rm reg}(X,B;x)\simeq \pi_1^{\rm reg}(Z,B_Z) .
\]
Furthermore, $\pi_1^{\rm reg}(X,B;x)$
is of the form  $\zz ^2 \rtimes \zz$.
\end{proposition}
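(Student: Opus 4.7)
By Lemma~\ref{lem:dual-graph}, when the exceptional locus $E = E_1 + \cdots + E_n$ of $\phi$ is a cycle of rational curves, we have $B_Z = E$ and $(Z, B_Z)$ is log smooth in a neighborhood of $E$. In particular, the strict transform $\hat{B}$ vanishes, so locally near $x$ we have $B = 0$ and thus $B_s = 0$. Consequently $\pi_1^{\rm reg}(X,B;x) = \pi_1^{\rm loc}(X;x)$, the first isomorphism of the proposition. For the second, note that $\phi$ restricts to an isomorphism $Z \setminus E \to X \setminus \{x\}$, and since $x$ is an isolated singularity, a punctured analytic neighborhood of $x$ is diffeomorphic to $V \setminus E$ for a small tubular neighborhood $V$ of $E$ in $Z$; therefore the link fundamental group equals $\pi_1(V \setminus E) = \pi_1^{\rm reg}(Z, B_Z)$.

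To identify this group as $\zz^2 \rtimes \zz$, the plan is to exhibit $V \setminus E$ as a $T^2$-bundle over the dual complex $\mathcal{D}(B_Z) \simeq S^1$. I would cover $E$ by local pieces of two types: a polydisk neighborhood $W_i$ of each node $p_i = E_i \cap E_{i+1}$, modeled on $\mathbb{D}^2 \times \mathbb{D}^2$ with $E$ equal to the coordinate axes, so that $W_i \setminus E$ retracts to the $T^2$ generated by the meridians $\alpha_i, \alpha_{i+1}$; and a tubular neighborhood $N_i$ of the interior piece of $E_i$ between its two nodes, so that $N_i \setminus E_i$ retracts to the $T^2$ generated by the meridian $\alpha_i$ and a loop along $E_i$. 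Gluing these cyclically along their $T^2$-intersections produces the desired $T^2$-bundle structure on $V \setminus E$ over the cycle $\mathcal{D}(B_Z)$.

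The homotopy long exact sequence of this fiber bundle yields
\[
1 \to \pi_1(T^2) \simeq \zz^2 \to \pi_1(V \setminus E) \to \pi_1(S^1) \simeq \zz \to 1,
\]
which splits because $S^1$ is aspherical and every $T^2$-bundle over $S^1$ admits a section. Hence $\pi_1^{\rm reg}(X,B;x) \simeq \zz^2 \rtimes_A \zz$, where the monodromy $A \in \mathrm{GL}_2(\zz)$ is determined by the self-intersections $E_i^2$ via the successive gluing matrices.

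The main obstacle is verifying the bundle structure carefully and identifying the fibers consistently across nodes. A more hands-on alternative is to adapt the Seifert--van Kampen argument of Lemma~\ref{lem:Mumford-pairs} to the cyclic dual graph: take generators $\alpha_1, \ldots, \alpha_n, \beta$, where $\beta$ corresponds to the non-contractible loop in $\mathcal{D}(B_Z)$; impose the commuting relations $[\alpha_i, \alpha_{i+1}]$ at each node and the Mumford-type relations $\alpha_{i-1}\alpha_{i+1}\alpha_i^{E_i^2} = 1$ from Lemma~\ref{lem:Mumford} (twisted appropriately by $\beta$ at the cut node); and then use Hirzebruch--Jung continued fraction manipulations in the spirit of Lemma~\ref{lem:hirzebruch-jung-group} to reduce the presentation to two commuting generators $\alpha_1,\alpha_2$ together with $\beta$ acting on $\zz\alpha_1 \oplus \zz\alpha_2$ by a matrix in $\mathrm{GL}_2(\zz)$, exhibiting the group as a semidirect product $\zz^2 \rtimes \zz$.
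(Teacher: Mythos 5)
Your proposal is correct, but it takes a genuinely different route from the paper. The paper's entire argument for the group structure is a citation: after noting (via Lemma~\ref{lem:dual-graph}) that $E$ is a cycle of smooth rational curves with $B_Z=E$ and $(Z,B_Z)$ log smooth near $E$, it invokes \cite[Lemma 2.3]{NW03} to conclude that $\pi_1^{\rm loc}(X,B;x)$ is of the form $\zz^2\rtimes\zz$ --- that is, the classical fact that the link of such a (cusp-type) singularity is a torus bundle over the circle is taken off the shelf. You instead reconstruct that fact directly: the plumbing decomposition of $V\setminus E$ into homotopy-$T^2$ pieces glued cyclically, the resulting $T^2$-bundle over $\mathcal{D}(B_Z)\simeq S^1$, and the homotopy exact sequence, which splits simply because the quotient $\zz$ is free. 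Your alternative Seifert--van Kampen/HNN argument (cut the cycle at one node, compute the chain by Mumford's method, reglue with a stable letter $\beta$) is likewise sound and is the hands-on version of the same computation. What your write-up buys is self-containedness and an explicit monodromy matrix in terms of the self-intersections $E_i^2$; what the paper's buys is brevity. You also justify the preliminary isomorphisms $\pi_1^{\rm loc}(X,B;x)\simeq\pi_1^{\rm reg}(X,B;x)\simeq\pi_1^{\rm reg}(Z,B_Z)$ (via $B_Z=E$ forcing $B=0$ near $x$, hence $B_s=0$, together with the tubular-neighborhood identification), a point the paper leaves implicit.
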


\begin{proof}
By Lemma~\ref{lem:dual-graph},
we know that $E$ is a chain of smooth rational curves and $B_Z$ contains no other curve.
Hence, by \cite[Lemma 2.3]{NW03},  $\pi_1^{\rm loc}(X,B;x)$ is of the form  $\zz ^2 \rtimes \zz$. In particular, it fits in the short exact sequence
$$1 \rightarrow \zz^2 \rightarrow \pi_1^{\rm loc}(X,B;x) \rightarrow \zz \rightarrow 1$$

\end{proof}

\begin{definition}{\em 
The coefficients of the different 
in Lemma~\ref{lem:dual-graph}
have the form $1-\frac{1}{n}$.
This follows from the adjunction formula~\cite{Kol13,Sho93}.
We will write $((n_1,n_2,\ldots,n_m))$
for the un-ordered set of fractions of the form $1-\frac{1}{n_i}$, 
allowing repetitions.
For a different $\Delta$, we will write
\[
b(\Delta):=((n_1,\dots,n_k)),
\]
for the corresponding set of non-trivial coefficients. 
We call $b(\Delta)$ the {\em basket of singularities} of the different
as it represents codimension two singularities of the ambient space.
In the case that the different is trivial, we write
$b(\Delta)=((\emptyset))$. More generally, for any divisor $D$ with standard coefficients, we will write:
\[
b(D):=((n_1,\ldots,n_k)),
\]
for the corresponding set of non-trivial coefficients. In this case, we will write $n_i=\infty$ if the coefficient is $1$.
}

\end{definition}

\begin{proposition}\label{prop:B_s=0}
Let $(X,B;x)$ be a log canonical surface singularity.
Let $\phi:(Z,B_Z)\rightarrow (X,B)$ be a dlt modification whose exceptional locus $E$ is a chain of rational curves and $B_s=0$. Then,
we have that 
\[
\pi_1^{\rm reg}(X,B;x)\simeq \pi_1^{\rm loc}(X,B;x),
\]
has a presentation with at most 3 generators and at most 3 relations.
\end{proposition}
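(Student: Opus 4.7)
The plan is to first reduce $\pi_1^{\rm reg}(X,B;x)$ to $\pi_1^{\rm loc}(X,B;x)$ using the hypothesis $B_s=0$, and then extract a presentation of the latter by working on a log resolution $\pi\colon Y\to X$ that factors through $\phi$. The isomorphism $\pi_1^{\rm reg}(X,B;x)\simeq \pi_1^{\rm loc}(X,B;x)$ is immediate: by Definition~\ref{def:reg-fun}, $\pi_1^{\rm reg}$ is the inverse limit of $\pi_1(U^{\rm sm}\setminus \supp(B_s))/\langle \gamma_i^{n_i}\rangle_n$, but with $B_s=0$ there are neither $\gamma$-generators nor torsion relations, so this collapses to $\pi_1(U^{\rm sm})=\pi_1^{\rm loc}(X;x)$.

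To bound the presentation, I would use Lemma~\ref{lem:dual-graph}(1) to observe that $(Z,B_Z)$ is log smooth in a neighborhood of the interior curves $E_2,\dots,E_{m-1}$, so the only singularities of $Z$ over $x$ lie at points of the endpoints $E_1$ and $E_m$; each is a cyclic quotient singularity recorded by a coefficient $1-1/n_i\geq 1/2$ in the differents $\Delta_1,\Delta_m$. By Lemma~\ref{lem:dual-graph}(3)--(4), the constraint $\deg\Delta_1+\hat B\cdot E_1\in\{1,2\}$ forces the number of cyclic quotient singularities at each endpoint to be at most $2$ when $m\geq 2$ (and if there are two, they are both of type $2$), and at most $4$ when $m=1$ (the configurations of $\tilde D_4$, $\tilde E_6$, $\tilde E_7$, $\tilde E_8$ type). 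Taking $\pi\colon Y\to X$ to be the log resolution that resolves exactly these singularities by their Hirzebruch--Jung strings gives a tree-shaped exceptional dual graph consisting of the main chain $E_1-\dots-E_m$ with at most four additional ``leaf'' branches attached at its two endpoints.

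Applying Lemma~\ref{lem:Mumford-pairs} to $\pi$ (with no $\gamma$-generators, since $B_s=0$) produces a presentation with a loop $\alpha$ around each exceptional curve plus the usual commutation and star relations. I would then apply Lemma~\ref{lem:hirzebruch-jung-group} to collapse each Hirzebruch--Jung branch to a single surviving generator $\beta^{(j)}$ together with one residual relation of the form $\alpha_{\mathrm{end}}^A (\beta^{(j)})^B=1$, and to express the interior main-chain loops $\alpha_2,\dots,\alpha_{m-1}$ in terms of $\alpha_1$ and the endpoint branch generators. Finally, in the most branched configurations, I would use the star relations at $E_1$ and $E_m$ to eliminate one further branch generator at each endpoint, and use a single branch relation $\beta^n=\alpha_1$ to eliminate $\alpha_1$ itself. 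A case-by-case analysis by the number of branches at each endpoint then shows that at most three generators and at most three relations survive.

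The main technical obstacle is the bookkeeping in the extremal cases where the bound is sharp: $m=1$ with four type-$2$ singularities (the $\tilde D_4$ configuration) and $m\geq 2$ with two type-$2$ singularities at each endpoint (the $\tilde D_n$-type configurations for $n\geq 5$). A useful auxiliary fact throughout is that the commutation relations $[\alpha,\beta]=1$ appearing in the Mumford presentation are automatically implied by the torsion-type branch relations $\beta^n=\alpha$ (since $\beta$ trivially commutes with its own power $\beta^n=\alpha$), so they need not be listed in the final count; it is this cancellation that makes the bound of three relations achievable in the extremal cases.
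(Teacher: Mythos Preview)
Your proposal is correct and follows essentially the same route as the paper: reduce to the local fundamental group via $B_s=0$, pass to a log resolution through $\phi$ whose exceptional graph is the main chain with Hirzebruch--Jung branches at the endpoints constrained by Lemma~\ref{lem:dual-graph}, then apply Lemma~\ref{lem:Mumford-pairs} and collapse using Lemma~\ref{lem:hirzebruch-jung-group} before a case split on the branch pattern. Your explicit remark that the commutation relations $[\alpha,\beta]$ are redundant once $\beta^n=\alpha$ holds is exactly the mechanism the paper uses implicitly in each case to get down to three relations; isolating it as a principle is a clean way to organize the bookkeeping, though you should note that in the two-endpoint cases one also needs that the collapsed chain generators are words in commuting elements (which follows since $x_1=a^2$ commutes with $a,b$, hence with $ab$).
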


\begin{proof}
Let $E=E_1+\dots+E_m$ be the chain of rational curves.
By Lemma~\ref{lem:dual-graph} (3),
we may assume that 
\begin{equation}\label{eq:upper-bound}
E_1\cdot \hat{B} + \deg\Delta_1=2.
\end{equation}
Here,
$\Delta_1$ is the different for the adjunction
to the exceptional divisor $E_1$.
We take a log resolution $f:Y \rightarrow Z$ of $(Z,B_Z)$, where each singular point in $E$ with $\deg\Delta_1=1-\frac{1}{n}$ has preimage a chain of rational curves with self-intersections $-m_1,\ldots,-m_l$, and $\frac{n}{q}=[m_1,\ldots,m_l]$. 
We will split the proof in two steps
depending on the number
of rational curves in the dlt modification.\\

\textit{Step 1:} We prove the statement
in the case that $E=E_1$ is a single rational curve.\\

As $E_1\cdot \hat{B}\geq 0$, 
we have that $\deg\Delta_1\leq 2$.
Recall that the coefficients of the different are of the form
$1-\frac{1}{n}$ for some positive integer $n$.
Hence, the condition
$\deg\Delta_1\leq 2$ implies that 

\begin{equation}\label{eq:subse}
b(\Delta_1) \in 
\left\{
  \begin{aligned}
((2,2,2,2)),((2,3,6)), ((2,4,4)),((3,3,3)), ((2,3,5))\\ ((2,3,4)), ((2,3,3)),
((2,2,n)),  ((n_1,n_2)),((n_2)),((\emptyset))
  \end{aligned}
\right\}
\end{equation}

In each of these cases, we  apply Lemma~\ref{lem:Mumford-pairs} to $\phi \circ f : Z \rightarrow X$:\\ 

\textit{Case 1.1:} We have $b(\Delta_1)=((2,2,2,2))$. 
Then, the following isomorphisms hold:
\begin{align*} 
\pi_1^{\rm reg}(X,B;x) &\simeq \langle a,b,c,d,x \mid xa^{-2},xb^{-2},xc^{-2},xd^{-2},abcdx^{-m}  \rangle \\& \simeq 
\langle a,b,c,d \mid a^{2}b^{-2},a^{2}c^{-2}, a^{2}d^{-2}, a^{1-2m}bcd\rangle \\& \simeq  \langle a,b,c \mid a^{2}b^{-2},a^{2}c^{-2}, a^{2}(a^{2m-1}b^{-1}c^{-1})^{-2}\rangle. 
\end{align*}

Then, in this case, we have at most 3 generators and 3 relations.\\

\textit{Case 1.2:} We have $b(\Delta_1)=((n,n',n''))$. Then, the following isomorphisms hold:
\begin{align*}
\pi_1^{\rm reg}(X,B;x)& \simeq
\left\langle 
  \begin{aligned}   
  a_1,\ldots,a_{l},b_1,\ldots,b_{l'}\\c_1,\ldots,c_{l''},x 
  \end{aligned}
\middle|
 \begin{aligned} &xa_{l-1}a_{l}^{-n},xb_{l'-1}b_{l'}^{-n'},xc_{l''-1}c_{l''}^{-n''},a_{l}b_{l'}c_{l''}x^{-m}\\& a_2a_1^{-q_1},\ldots,  a_{l}a_{l-2}a_{l-1}^{-q_{l-1}},b_2b_1^{-q'_1},\ldots,b_{l'}b_{l'-2}b_{l'-1}^{-q'_{l'-1}}\\&c_2c_1^{-q''_1},\ldots,c_{l''}c_{l''-2}c_{l''-1}^{-q''_{l''-1}}
  \end{aligned}
\right\rangle\\
& \simeq \langle a_1,b_1,c_1,x \mid xa_1^{-n},xb_1^{-n'},xc_1^{-n''},a_1^{t}b_1^{t'}c_{1}^{t''}x^{-m} \rangle \\&\simeq \langle a_1,b_1,c_1 \mid a_1^{n}b_1^{-n'},a_1^{n}c_1^{-n''},a_1^{t-mn}b_1^{t'}c_1^{t''} \rangle.
\end{align*} 
Here, the second isomorphism is a consequence of iterated applications of Lemma~\ref{lem:hirzebruch-jung-group}.
Then, in this case, we have 3 generators and 3 relations.\\

\textit{Case 1.3:} We have $b(\Delta_1)=((n,n'))$, $b(\Delta_1)=((n))$ or $b(\Delta_1)=((\emptyset))$. The exceptional divisor of $\phi \circ f:Z \rightarrow X$ is a chain of rational curves. Therefore, by Corollary~\ref{cor:smooth-chain}, we have an isomorphism:
\begin{align*}
    \pi_1^{\rm reg}(X,B;x)  & \simeq \zz /n \zz. 
\end{align*}

This finishes the proof
in the case that there is a unique
exceptional rational curve.\\
 
\textit{Step 2:} 
In this step,
we prove the statement
of the proposition when 
there are at least $2$ rational curves
in the exceptional divisor of the dlt modification.\\

In this case, the only singularities or intersections with non-exceptional divisors in $B_Y$ happen at the two end curves.
As for each of these curves,
we have that $\deg{\Delta_i}=1$, they have either
\begin{itemize}
    \item no singularities along the curve, 
    \item one orbifold singularity
    of index $n$ along the curve, or
    \item two orbifold singularities
    of index $2$ along the curve.
\end{itemize}
If the exceptional divisor of $\phi \circ f$ is a chain of rational curves, by Corollary~\ref{cor:smooth-chain}, we have the following isomorphism:
\begin{align*}
    \pi_1^{\rm reg}(X,B;x)  & \simeq \zz /n \zz. 
\end{align*}

 Depending on the singularities of $B_Y$,
we have two remaining cases.\\

\textit{Case 2.1:} Exactly one  curve has two orbifold singularities of order $2$. In this case, the exceptional divisor of $\phi\circ f:Y \rightarrow X$ is the union of a chain of rational curves and two additional rational curves with self-intersection $-2$ intersecting one end curve.
    By Lemma~\ref{lem:Mumford-pairs}, we have the following isomorphism:
    
    \begin{align*}
    \pi_1^{\rm reg}(X,B;x)&\simeq 
    \left\langle 
    \begin{aligned}  
    a,b,x_1,\ldots, x_n,
    \end{aligned}
    \middle|
    \begin{aligned}
    &x_1a^{-2},x_1b^{-2},abx_{2}x_1^{-m_1},\{x_{i-1}x_{i+1}x_{i}^{-m_i}\}_{1<i<n}\\&x_{n-1}x_n^{-m_n},\{[x_i,x_{i+1}]\}_{\{1\leq i < n-1\}}
    \end{aligned}
    \right\rangle.    
    \end{align*}
    By Lemma~\ref{lem:hirzebruch-jung-group} the group is isomorphic to:
    \begin{align*}
    \pi_1^{\rm reg}(X,B;x) &\simeq \langle a,b,x_1 \mid x_1a^{-2},x_1b^{-2},(ab)^{a_n}x_1^{b_n}((ab)^{a_{n+1}}x_1^{b_{n+1}})^{-m} \rangle\\&\simeq \langle a,b \mid a^2b^{-2},(ab)^{a_n-ma_{n+1}}a^{2b_n-2mb_{n+1}} \rangle.
    \end{align*}
    
\textit{Case 2.2:} Both end curves have two orbifold singularities of order $2$.
    In this case, the exceptional divisor of $\phi\circ f:Y \rightarrow X$ is the union of a chain of rational curves and two additional rational curves with self-interserction $-2$ intersecting each one a different end curve. By Lemma~\ref{lem:Mumford-pairs}, we have the following isomorphism:
    \begin{align*}
     \pi_1^{\rm reg}(X,B;x)&\simeq 
     \left\langle 
  \begin{aligned}  
  a,b,x_1,\ldots, x_n,c,d
  \end{aligned}
\middle|
 \begin{aligned}
  & x_1a^{-2},x_1b^{-2},abx_{2}x_1^{-m_1},\{x_{i-1}x_{i+1}x_{i}^{-m_i}\}_{1<i<n}\\ &x_{n-1}cdx_n^{-m_n},x_nc^{-2},x_nd^{-2} , \{[x_i,x_{i+1}]\}_{\{1\leq i \leq n-1\}}.
  \end{aligned}
\right\rangle
    \end{align*}
    By Lemma~\ref{lem:hirzebruch-jung-group} the group is isomorphic to:
    \begin{align*}
    \pi_1^{\rm reg}(X,B;x)&\simeq\langle a,b,x_1,c,d \mid x_1a^{-2},x_1b^{-2},(ab)^{a_{n-1}}x_1^{b_{n-1}}((ab)^{a_n}x_1^{b_n})^{-m_n}cd,(ab)^{a_n}x_1^{b_n}c^{-2},(ab)^{a_n}x_1^{b_n}d^{-2} \rangle\\&\simeq\langle a,b,c \mid a^2b^{-2},(ab)^{a_n}a^{2b_n}c^{-2},c^{2}((ab)^{a_{n-1}}a^{2b_{n-1}}c^{-2m_n+1})^{-2} \rangle. 
    \end{align*}

In each of the previous cases, we have a presentation with at most $4$ generators and at most $7$ relations 
This finishes the proof of the proposition.
\end{proof}

\begin{proposition}\label{prop:caso-chain-rational}
Let $(X,B;x)$ be a log canonical surface singularity.
Let $\phi:(Z,B_Z)\rightarrow (X,B)$ be a dlt modification, whose exceptional locus $E$ is a chain of rational curves, then $\pi_1^{\rm reg}(Z,B_Z)\simeq\pi_1^{\rm reg}(X,B;x)\simeq\pi_1^{\rm loc}(X,B;x)$ has a presentation with at most 4 generators and 7 relations.
\end{proposition}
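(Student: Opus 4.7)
The plan is to extend the proof of Proposition~\ref{prop:B_s=0} to the case where $B_s$ is non-trivial, tracking the extra generators and relations that come from the strict transform of $B_s$. Let $E=E_1+\cdots+E_m$ be the exceptional chain. First I would take a log resolution $f\colon Y\to Z$ of $(Z,B_Z)$ replacing each orbifold point of index $n$ on $E$ by a Hirzebruch--Jung chain of rational curves of self-intersections $-m_1,\dots,-m_\ell$ with $\frac{n}{q}=[m_1,\dots,m_\ell]$. Applying Lemma~\ref{lem:Mumford-pairs} to $\phi\circ f$ produces a presentation of $\pi_1^{\rm reg}(X,B;x)$ with loops $\alpha_k$ around each exceptional prime of $\phi\circ f$ (the $E_i$'s together with the HJ chains), loops $\gamma_j$ around each component of $f^{-1}_{*}B_s$ meeting the exceptional locus, the standard Mumford relations, commutator relations along intersections, and the power relations $\gamma_j^{n_j}$ from the definition of the regional fundamental group.

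By Lemma~\ref{lem:dual-graph}(1), the interior curves $E_2,\dots,E_{m-1}$ carry neither singularities of $Z$ nor intersections with $\hat{B}$, so every HJ chain and every $\gamma_j$ attaches to $E_1$ or $E_m$. I would then iteratively apply Lemma~\ref{lem:hirzebruch-jung-group} to collapse each HJ chain and the interior of the central chain, rewriting every interior generator as a monomial in the surviving ``end'' generators and in the $\gamma_j$'s.

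The remaining work is a case analysis at the two ends, controlled by the constraint
\[
E_i\cdot\hat{B}+\deg\Delta_i=1 \qquad (i\in\{1,m\})
\]
from Lemma~\ref{lem:dual-graph}(4), together with the $m=1$ sub-case where the right-hand side is $2$ and the argument reduces to Cases~1.1--1.3 of Proposition~\ref{prop:B_s=0} enriched with the extra $\gamma_j$-generators and their power relations. Since every coefficient of $\Delta_i$ has the form $1-\frac{1}{n}\ge\frac12$ and every component of $B_s$ contributes to $\hat{B}|_{E_i}$ with coefficient at least $\frac12$, at most two orbifold singularities of $Z$ and/or $B_s$-components can meet each end curve before the numerical bound is exceeded. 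Enumerating the resulting finite list of end-configurations and running the HJ reduction in each case, one finds at most two surviving generators at each end plus at most one central generator, giving four generators in total; and at most three independent relations at each end together with the single ``long'' Mumford relation coming from the collapsed central chain, giving seven relations in total. The extremal configuration, realised when each end contributes two transverse $B_s$-components of coefficient $\frac12$, is the toric case announced in Theorem~\ref{introthm:surf}. The isomorphisms $\pi_1^{\rm reg}(Z,B_Z)\simeq\pi_1^{\rm reg}(X,B;x)\simeq\pi_1^{\rm loc}(X,B;x)$ follow because $\phi$ is an isomorphism over the complement of the exceptional locus, which the presentation above fully encodes.

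The main obstacle is the combinatorial bookkeeping at the two ends: after the HJ collapse one must verify that the bound of four generators and seven relations holds uniformly across every pairing of end-configurations, not just in the toric extremal case. This requires carefully distinguishing which relations become redundant once Lemma~\ref{lem:hirzebruch-jung-group} has been used to eliminate the interior generators, a point where an over-count can easily push past the stated bound if one is not careful about commutator relations that are already implied by the Mumford relations and the $\gamma_j^{n_j}$ relations.
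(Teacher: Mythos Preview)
Your strategy is exactly the paper's: pass to a log resolution, apply Lemma~\ref{lem:Mumford-pairs}, collapse the Hirzebruch--Jung strings and the interior of the central chain via Lemma~\ref{lem:hirzebruch-jung-group}, and then do a finite case analysis at the two end curves governed by Lemma~\ref{lem:dual-graph}. The paper also treats only $B_s\neq 0$ here, referring back to Proposition~\ref{prop:B_s=0} otherwise.

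Your generator/relation arithmetic is off, though, and this is exactly the bookkeeping you flag. ``Two at each end plus one central'' is five, not four; what actually happens in the extremal configuration (both ends of type $((2,2))$, all four curves in $\hat B_s$) is that the long Mumford relation is used to \emph{eliminate} one end generator, so it is not counted again among the seven relations. The surviving seven relations at that stage are four at one end ($a^2,b^2,[a,x_1],[b,x_1]$), two at the other ($c^2$ and a commutator of $c$ with a monomial in $ab,x_1$), and the transformed square relation coming from the eliminated generator. The paper packages this uniformly with a gadget $\delta_{t,x}(y)$ that records, for each end loop $y$, whether it is exceptional (one relation $xy^{-t}$) or a $B_s$-loop (two relations $y^t,[x,y]$, or just $[x,y]$ if $t=\infty$); counting $|\delta|\le 2$ then gives the bound mechanically and also pinpoints when equality forces all end loops to be $B_s$-loops, which is how the toric extremal case is isolated. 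Adopting an analogous device would close the gap in your sketch.
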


\begin{proof}
We take a log resolution $f:Y \rightarrow Z$, where each singular point in $E$ with $\deg\Delta=1-\frac{1}{n}$ has preimage a chain rational curves with self-intersections $-m_1,\ldots,-m_l$, and $\frac{n}{q}=[m_1,\ldots,m_l]$. 
We only have to deal with the case in which ${B}_s\neq 0$.
Otherwise, we are in the context of Proposition~\ref{prop:B_s=0}.

Here $\hat{B_{Y,s}}$ and $\hat{E}$ will denote the strict transforms of $B_{Y,s}$ and $E$ in $Z$. $\hat{B_{s}}$ will denote the strict transform of $B_s$ in $Y$.
For $x,y\in \pi_1^{\rm reg}(X,B;x)$ we will define:

\begin{equation}
\delta_{t,x}(y):=
     \begin{cases}
       \{xy^{-t}\}    & \ \text{ if $y$ corresponds to a loop around an exceptional divisor of $f$}\\
  \{y^t,[x,y]\}  & \ \text{ if $y$ corresponds to a loop around a curve in $\hat{B_{Y,s}}\backslash \hat{E}$ }\\
  
 \{[x,y]\} & \ \text{ if $y$ corresponds to a loop around a curve in $\hat{B_{Y,s}}\backslash \hat{E}$ and $t= \infty$} \\
 \emptyset  & \  \text{ if $y$ corresponds to a trivial loop}\\ 

     \end{cases}
\end{equation}

\textit{Step 1:} We prove the statement
in the case that $E=E_1$ is a single rational curve.\\

By Lemma~\ref{lem:dual-graph},
we have an upper bound for the degree \[\deg(\Delta_1+\hat{B_s}|_{E_1})\leq 2.\]
We will prove this step in three cases
depending on $b(\Delta_1+\hat{B_s}|_{E_1})$.
The case  $b(\Delta_1+\hat{B_s}|_{E_1})=((2,2,2,2))$, gives rise to two different cases, depending on how many non-exceptional curves we have in $B_{Y,s}$.\\

\textit{Case 1.1:}
    We assume that $b(\Delta_1+\hat{B_s}|_{E_1})=((2,2,2,2))$
    and we have one, two, or three curves in $\hat{B_s}$.
    By Lemma~\ref{lem:Mumford-pairs}, we have the following isomorphisms:
    \begin{align*}
    \pi_1^{\rm reg}(X,B;x)
    &\simeq\langle a,b,c,d,x \mid xa^{-2},\delta_{2,x}(b),\delta_{2,x}(c),d^2,[x,d],abcdx^{-m},\rangle\\
    &\simeq\langle a,b,c,d \mid \delta_{2,a^2}(b),\delta_{2,a^2}(c),d^2,[a^2,d], a^{1-2m}bcd\rangle\\
    & \simeq \langle a,b,c \mid \delta_{2,a^2}(b),\delta_{2,a^2}(c),(a^{1-2m}bc)^2\rangle.    
    \end{align*}   
    Each $\delta_{t,x}(y)$ is at most two relations, so we have a presentation with 3 generators and at most 5 relations.\\

\textit{Case 1.2:} We assume that $b(\Delta_1+\hat{B_s}|_{E_1})=((2,2,2,2))$ and we have 4 curves in $\hat{B_{s}}$. By Lemma~\ref{lem:Mumford-pairs}, we have the following isomorphisms:
    \begin{align*}
    \pi_1^{\rm reg}(X,B;x)&\simeq\langle a,b,c,d,x \mid a^{2},b^{2},c^{2},d^{2},abcdx^{-m}, [x,a],[x,b][x,c][x,d]     \rangle\\&\simeq\langle a,b,c,x \mid a^2,b^2,c^2,(abcx^{m})^{2},[a,x],[b,x],[c,x] \rangle. 
    \end{align*}
    
    This gives a presentation with 4 generators and 7 relations.\\

\textit{Case 1.3:}
We assume that 
$b(\Delta_1+\hat{B_s}|_{E_1})$
is not equal to $((2,2,2,2))$.
In these cases, $b(\Delta_1+\hat{B_s}|{E_1})$ is of one of the following forms: $((n_1,n_2,n_3)),((n_1,n_2)),((n)),((\emptyset))$. Here, $n_i$ can be $\infty$ only if it is a coefficient in $B$. By applying Lemma~\ref{lem:Mumford-pairs}, we get the following presentation:

    \begin{align*}
     \pi_1^{\rm reg}(X,B;x)&\simeq 
     \left\langle 
  \begin{aligned}  
  A,B,C,x
  \end{aligned}
\middle|
 \begin{aligned}
  &a_{l_1}b_{l_2}c_{l_3}x^{-m},\delta_{t_1,a_{2}}(a_{1}),\delta_{t_2,b_{2}}(b_{1}),\delta_{t_3,c_{2}}(c_{1})\\& 
  \{a_{i-1}a_{i+1}a_{i}^{-m_{1,i}}\}_{\{2\leq i\leq l_1\}},\{[a_i,a_{i+1}]\}_{\{1\leq i \leq l_1 \}},
  \{b_{i-1}b_{i+1}b_{i}^{-m_{2,i}}\}_{\{2\leq i\leq l_2\}}\\
  & \{[b_i,b_{i+1}]\}_{\{1\leq i \leq l_2 \}},
    \{c_{i-1}c_{i+1}c_{i}^{-m_{3,i}}\}_{\{2\leq i\leq l_3\}},\{[c_i,c_{i+1}]\}_{\{1\leq i \leq l_3\}}
  \end{aligned}
\right\rangle.
    \end{align*}

Where $A,B,C$ is $\{a_1,\ldots,a_{l_1}\},\{b_1, \ldots, b_{l_2}\}, \{c_1,\ldots,c_{l_3}\}$, and $a_{l_1+11}=b_{l_2+1}=c_{l_3+1}=x$ in the previous presentation. Here, each $l_i$ is $0$ if $n_i$ is not present in $b(\Delta_1+\hat{B_s}|_{E_1})$.
By applying Lemma~\ref{lem:hirzebruch-jung-group} to each chain $A,B,C$, there exists a presentation of the local fundamental group: 

 \begin{align*}
    \pi_1^{\rm reg}(X,B;x)
    &\simeq\langle a_1,b_1,c_1,x \mid \delta_{n_1,x}(a_1),\delta_{n_2,x}(b_1),\delta_{n_3,x}(c_1),a_1^{t_1}b_1^{t_2}c_1^{t_3}x^{-m}\rangle. 
    \end{align*}

As each $\delta_{t,x}(y)$ contains at most two relations, we have at most 4 generators and 7 relations.
The only possible way for this to have exactly 4 generators and 7 relations is for all $\delta_{t,x}(y)$ here to be 2 elements. Hence, $a_1,b_1,c_1$ have to be loops around a curve in $\hat{B_{Y,s}}\backslash \hat{E}$. Therefore, by Lemma~\ref{lem:Mumford-pairs} we actually have the isomorphism:

 \begin{align*}
    \pi_1^{\rm reg}(X,B;x)
    &\simeq\langle a,b,c,x \mid a^{n_1},b^{n_2},c^{n_3},abcx^{-m},[a,x],[b,x],[c,x]\rangle \\
    &\simeq \langle a,b,x \mid a^{n_1},b^{n_2},(b^{-1}a^{-1}x^{m})^{n_3},[a,x],[b,x]\rangle .
    \end{align*}

Therefore, in Case 1.3., there are no
minimal presentations with 4 generators
and 7 relations.
This finishes the proof
in the case that there is a unique
exceptional rational curve in the dlt modification.\\

\textit{Step 2:} In this step, we prove the statement of the proposition when there are at least $2$ rational curves in the exceptional divisor of the dlt modification. \\ 

By Lemma~\ref{lem:dual-graph}, the only singularities in the strict transform of $E$
happen at the end curves of the chain. Also,
Lemma~\ref{lem:dual-graph} implies that
the only intersections of $\hat{E}$
with $\hat{B_{Y,s}}\backslash \hat{E}$ happen at the end curves of the chain.\\

\textit{Case 2.1:}  We assume that
$b(\Delta_m+\hat{B_s}|_{E_n})=((2,2))$
and $b(\Delta_1+\hat{B_s}|_{E_1})\in \{((n_1)),((\emptyset))\}$.
In this case, by
Lemma~\ref{lem:Mumford-pairs}, we have a presentation:

    \begin{align*}
     \pi_1^{\rm reg}(X,B;x)&\simeq 
     \left\langle 
  \begin{aligned}  
  a,b,x_1,\dots,x_n\\c_1,\ldots,c_l
  \end{aligned}
\middle|
 \begin{aligned}
  &\delta_{2,x_1}(a),\delta_{2,x_1}(b),abx_2x_1^{-m_1}, \{x_{i-1}x_{i+1}x_{i}^{-m_{i}}\}_{1<i<n},\{[x_i,x_{i+1}]\}_{\{1\leq i \leq n\}}\\ & c_lx_{n-1}x_{n}^{-m_n}, \delta_{m_{1,1},c_{2}}(c_{1}), \{c_{i-1}c_{i+1}c_{i}^{-m_{1,i}}\}_{\{2\leq i\leq l\}},\{[c_i,c_{i+1}]\}_{\{1\leq i \leq l\}}
  \end{aligned}
\right\rangle.
    \end{align*}
Where, 
$c_{l+1}=x_n$ 
in the previous presentation.
Applying Lemma~\ref{lem:hirzebruch-jung-group} to the chains $x_1,\dots,x_n$ and $c_1,\ldots,c_l$, we get an isomorphism:
\begin{align*}
\pi_1^{\rm reg}(X,B;x)&\simeq\langle a,b,x_1,c_1 \mid \delta_{2,x_1}(a),\delta_{2,x_1}(b),c_1^{t'}(ab)^{t_1}x_1^{t_2},\delta_{r,(ab)^{t_3}x_1^{t_4}}(c_{1})\rangle.
\end{align*}

As each $\delta_{t,x}(y)$ contains at most two relations, we have at most 4 generators and 7 relations.
The only possible way for this group to have exactly 4 generators and 7 relations is for all $\delta_{t,x}(y)$ 
in the right-hand side to contain 2 elements. Hence, in this case, $a,b,c_1=c_l$ are loops around a curve in $\hat{B_{Y,s}}\backslash \hat{E}$. Therefore, by Lemma~\ref{lem:Mumford-pairs}, we have the isomorphism:

\begin{align*}
\pi_1^{\rm reg}(X,B;x)&\simeq\langle a,b,x_1,c \mid a^2,[a,x_1],b^2,[b,x_1],c^m,[c,(ab^{t_3})x_1^{t_4}],\,c_1(ab)^{t_1}x_1^{t_2}\rangle\\
&\simeq \langle a,b,x_1 \mid a^2,[a,x_1],b^2,[b,x_1],((ab)^{t_1}x_1^{t_2})^m\rangle.
\end{align*}
Thus, under the assumptions of Case 2.1, there are no minimal presentations with exactly 4 generators and 7 relations.\\

\textit{Case 2.2:} We assume that
$b(\Delta_m+\hat{B_s}|_{E_n})=b(\Delta_1+\hat{B_s}|_{E_1})=((2,2))$. Here, we can apply Lemma~\ref{lem:Mumford-pairs} to get a presentation:

    \begin{align*}
     \pi_1^{\rm reg}(X,B;x)&\simeq 
     \left\langle 
  \begin{aligned}  
  a,b,x_1,\dots,x_n,c,d
  \end{aligned}
\middle|
 \begin{aligned}
  &\delta_{2,x_1}(a),\delta_{2,x_1}(b),abx_2x_1^{-m_1}, \{x_{i-1}x_{i+1}x_{i}^{-m_{i}}\}_{1<i<n}\\ & \{[x_i,x_{i+1}]\}_{\{1\leq i \leq n\}}, cdx_{n-1}x_{n}^{-m_n},\delta_{2,x_{n}}(c),\delta_{2,x_{n}}(d)
  \end{aligned}
\right\rangle.
    \end{align*}

By Lemma~\ref{lem:hirzebruch-jung-group} applied to the chain $x_1\ldots x_n$, we get an isomorphism:

    \begin{align*}
     \pi_1^{\rm reg}(X,B;x)& \simeq
     \left\langle 
  \begin{aligned}  
  a,b,x_1,c,d
  \end{aligned}
\middle|
 \begin{aligned}
  &(ab)^{a_{n-1}}x_1^{b_{n-1}}((ab)^{a_n}x_1^{b_n})^{-m_n}cd, \delta_{2,x_1}(a)\\& \delta_{2,x_1}(b), \delta_{2,(ab)^{a_n}x_1^{b_n}}(c)\delta_{2,(ab)^{a_n}x_1^{b_n}}(d)
  \end{aligned}
\right\rangle.
    \end{align*}

This case turns into two different cases.\\

\textit{Case 2.2.1:} We have one, two, or three non-exceptional curves in $\hat{B_{Y,s}}$. Then, we have the following isomorphisms due to Lemma~\ref{lem:Mumford-pairs}:
    \begin{align*}
    \pi_1^{\rm reg}(X,B;x) &\simeq
\left\langle 
   \begin{aligned}
   a,b,c,d,x_1 
   \end{aligned}
   \middle|
   \begin{aligned}
   &(ab)^{a_{n-1}}x_1^{b_{n-1}}((ab)^{a_n}x_1^{b_n})^{-m_n}cd,x_1a^{-2}\\ &\delta_{2,x_1}(b),\delta_{2,(ab)^{a_n}x_1^{b_n}}(c),d^2,[(ab)^{a_n}x_1^{b_n},d]
   \end{aligned}
\right\rangle
    \\&\simeq\langle a,b,c,d \mid(ab)^{a_{n-1}}a^{2b_{n-1}}((ab)^{a_n}a^{2b_n})^{-m_n}cd, \delta_{2,a^2}(b),\delta_{2,(ab)^{a_n}a^{2b_n}}(c),d^2,[(ab)^{a_n}a^{2b_n},d] \rangle,
    \\&\simeq\langle a,b,c, \mid \delta_{2,a^2}(b),\delta_{2,(ab)^{a_n}a^{2b_n}}(c),((ab)^{a_{n-1}}a^{2b_{n-1}}((ab)^{a_n}a^{2b_n})^{-m_n}c)^2\rangle.
    \end{align*}
    As each $\delta_{t,x}(y)$ contains at most two relations, we have a presentation with 3 generators and at most 5 relations.\\

\textit{Case 2.2.2:} We have four non-exceptional curves in the strict transform $\hat{B_{Y,s}}$. Then,
    the following isomorphisms hold
    due to Lemma~\ref{lem:Mumford-pairs}.
    \begin{align*}
    \pi_1^{\rm reg}(X,B;x)&
    \simeq 
\left\langle 
  \begin{aligned}
  a,b,c,d,x_1
  \end{aligned}
\middle|
 \begin{aligned}
 &(ab)^{a_{n-1}}x_1^{b_{n-1}}((ab)^{a_n}x_1^{b_n})^{-m_n}cd,a^{2},b^{2},c^{2},d^{2}\\ &[a,x_1],[b,x_1], [c,(ab)^{a_n}x_1^{b_n}],[d,(ab)^{a_n}x_1^{b_n}] 
  \end{aligned}
\right\rangle
    \\&\simeq\langle a,b,c,x_1  \mid a^{2},b^{2},[a,x_1],[b,x_1],c^{2},((ab)^{m_na_n-a_{n-1}}x_1^{m_nb_n-b_{n-1}}c)^{2},[c,(ab)^{a_n}x_1^{b_n}] \rangle.
    \end{align*}
Thus, we get a minimal presentation with 4 generators and 7 relations.\\

\textit{Case 2.3:}
We assume that each $b(\Delta_1+\hat{B_s}|_{E_1})$ and $b(\Delta_m+\hat{B_s}|_{E_m})$ are of the form $((n_i))$ or $((\emptyset))$. By Lemma~\ref{lem:Mumford-pairs}, we get a presentation:

    \begin{align*}
     \pi_1^{\rm reg}(X,B;x)&\simeq 
     \left\langle 
  \begin{aligned}  
  a_1\ldots a_{l_1},&\\x_1,\dots,x_n,&\\c_1,\ldots,c_{l_2}&
  \end{aligned} 
\middle|
 \begin{aligned}
  & \delta_{m_{1,1},a_{2}}(a_{1}), \{a_{i-1}a_{i+1}a_{i}^{-m_{1,i}}\}_{\{2\leq i\leq l_1\}},\{[a_i,a_{i+1}]\}_{\{1\leq i \leq l_1\}}\\& a_{l_1}x_2x_1^{-m_1}, \{x_{i-1}x_{i+1}x_{i}^{-m_{i}}\}_{1<i<n},\{[x_i,x_{i+1}]\}_{\{1\leq i \leq n\}}, c_{l_2}x_{n-1}x_{n}^{-m_n}\\& \delta_{m_{2,1},c_{2}}(c_{1}), \{c_{i-1}c_{i+1}c_{i}^{-m_{2,i}}\}_{\{2\leq i\leq l_2\}},\{[c_i,c_{i+1}]\}_{\{1\leq i \leq l_2\}}
  \end{aligned}
\right\rangle.
    \end{align*}

By applying Lemma~\ref{lem:hirzebruch-jung-group} to the chains $(a_1\ldots a_{l_1}), (x_1,\dots,x_n)$,
and $(c_1,\ldots,c_{l_2}),$ we obtain an isomorphism:

    \begin{align*}
     \pi_1^{\rm reg}(X,B;x)&\simeq 
\langle 
  a_1,x_1,c_1
\mid 
 \delta_{n_1,x_1}(a_1),c_1^{t'}(a_1)^{t_1}x_1^{t_2},\delta_{n_2,a_1^{t_3}x_1^{t_4}}(c_{1})
 \rangle.
    \end{align*}

As each $\delta_{t,x}(y)$ is at most two relations, we have at most 3 generators and 5 relations.

In each of the previous cases,
we have a presentation
with at most $4$ generators
and at most $7$ relations.
This finishes the proof of the proposition.
\end{proof}

\begin{proof}[Proof of Theorem~\ref{introthm:surf}]
First, we prove the statement about the upper bound on the number of generators and relations.
We take a dlt modification $\phi: (Z,B_Z) \rightarrow (X,B)$. By Lemma~\ref{lem:dual-graph}, the exceptional divisor of $\phi$ is one of the following:

\begin{enumerate}
    \item[(i)] a chain of rational curves,
    \item[(ii)] a cycle of rational curves, or
    \item[(iii)] an elliptic curve.
\end{enumerate}
In the first case, by Proposition~\ref{prop:B_s=0} and Proposition \ref{prop:caso-chain-rational} $\pi_1^{\rm reg}(X,B;x)$ has a presentation with at most 4 generators and at most 7  relations. 
In the second case, the statement follows by Proposition~\ref{prop:caso-cycle-rational}.
Finally, in the third case, the statement follows
by Proposition~\ref{prop:caso-eliptic}.

Now, we turn to prove the second statement of the theorem.
Assume that $\pi_1^{\rm reg}(X,B;x)$ admits a minimal presentation
with $4$ generators and $7$ relations. 
By Proposition~\ref{prop:caso-eliptic}, Proposition~\ref{prop:caso-cycle-rational}, 
and Proposition~\ref{prop:B_s=0}, Proposition~\ref{prop:caso-chain-rational} we conclude that this can only happen in the two following cases: 
\begin{itemize}
    \item the dlt modification
    $(Y,B_Y)$ of $(X,B)$ extracts a unique rational curve and $B$ has 4 components with coefficient $\frac{1}{2}$, or
    \item the dlt modification $(Y,B_Y)$ of $(X,B)$ extracts a chain of rational curve, 
    $B$ has 4 components with coefficients $\frac{1}{2}$, and their strict transforms only intersect the first and last curve of the chain.
\end{itemize}
We denote by $\phi\colon Y\rightarrow X$ the dlt modification.
Let $E$ be the reduced exceptional divisor of the dlt modification.
Then, we have that 
\[
\phi^*(K_X+B)=K_Y+E+\hat{B},
\]
where $\hat{B}$ is the strict transform of $B$ on $Y$.
By construction, the variety $Y$ is $\qq$-factorial.
We run a $(K_Y+E)$-MMP over the base.
Observe that in any of these two cases, the endpoints of the chain are $(K_Y+E)$-negative curves,
as they intersect $\hat{B}$ positively.
Hence, this MMP will 
inductively contract the endpoints of the chain of curves. Thus, this minimal model program terminates on $X$.
We conclude that $X$ is $\qq$-factorial at $x$. Therefore, the local class group ${\rm Cl}(X_x)$ is torsion.
Note that the sum of 
the coefficients of the components of $B$ through $x$,
denoted by $|B|$ equals $2$.
Altogether, we conclude that
\[
\dim X + \rank_\qq{\rm Cl}(X_x) -|B|=0.
\]
By~\cite[Theorem 2]{MS21},
we conclude that $X$ must be formally toric around the point $x$.
\end{proof} 

Now, we turn to prove a lemma that will be used in the proof of Theorem~\ref{introthm:solvable}.

\begin{lemma}\label{lem:normal-subgroup}
Let $(X,B;x)$ be a log canonical surface singularity of coregularity zero. Let $\phi: (Z,B_Z)\rightarrow (X,B)$ a dlt modification. There exists a subgroup $H < \pi_1^{\rm reg}(Z,B_Z)$ that is an abelian normal subgroup of $\pi_1^{\rm reg}(X,B;x)$.
\end{lemma}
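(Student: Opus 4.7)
The plan is to split on the structure of the exceptional divisor $E$ of the dlt modification $\phi \colon (Z, B_Z) \to (X, B)$, furnished by Lemma~\ref{lem:dlt-mod}. Since the link of $(X, B; x)$ is identified with a punctured tubular neighborhood of $\lfloor B_Z \rfloor$ in $Z$, one has a canonical isomorphism $\pi_1^{\rm reg}(X, B; x) \simeq \pi_1^{\rm reg}(Z, B_Z)$, and it suffices to exhibit an abelian normal subgroup inside the common group.

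By Lemma~\ref{lem:dual-graph}, $E$ is a chain of rational curves, a cycle of rational curves, or a single elliptic curve. The coregularity-zero hypothesis gives $\dim \mathcal{D}(Z, B_Z) = 1$, so two components $D_1, D_2$ of $\lfloor B_Z \rfloor$ meet at a point $p$; this rules out the elliptic case (whose dual complex has dimension zero). In the cycle case, Proposition~\ref{prop:caso-cycle-rational} identifies $\pi_1^{\rm reg}(X, B; x)$ with $\mathbb{Z}^2 \rtimes \mathbb{Z}$, and I would take $H := \mathbb{Z}^2$, which is abelian by construction and normal as the kernel of the projection to $\mathbb{Z}$.

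In the remaining cases (a chain of $m \geq 2$ exceptional rational curves, or a single exceptional rational curve meeting a coefficient-one component of $\hat B$), I would let $\alpha_1, \dots, \alpha_m$ denote the loops around the exceptional curves of the chain. Adjacent loops $\alpha_i, \alpha_{i+1}$ commute by Lemma~\ref{lem:Mumford-pairs}, because a punctured neighborhood of $E_i \cap E_{i+1}$ retracts onto a two-torus. For an interior curve $E_i$ (which by Lemma~\ref{lem:dual-graph}(1) meets only $E_{i-1}$ and $E_{i+1}$), the local piece $\pi_1(U_i)$ is a trivial $S^1$-bundle over $\mathbb{C}^*$ and hence isomorphic to $\mathbb{Z}^2$; the big relation $\alpha_{i-1}\alpha_{i+1}\alpha_i^{E_i^2}=1$ then forces $\alpha_{i-1}$ and $\alpha_{i+1}$ to commute. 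An induction along the chain shows that $H := \langle \alpha_1, \dots, \alpha_m \rangle$ is abelian, and under the isomorphism $\pi_1^{\rm reg}(Z,B_Z) \simeq \pi_1^{\rm reg}(X,B;x)$ it is a subgroup of the latter.

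The hardest step is normality of $H$ in $\pi_1^{\rm reg}(X,B;x)$. The remaining generators of $\pi_1^{\rm reg}(Z, B_Z)$ are loops $\gamma$ around non-integer components of $B_Z$; by Lemma~\ref{lem:dual-graph} each such $\gamma$ meets only an endpoint $E_s$ of the chain, so $\gamma$ commutes with $\alpha_s$. Combining the big relation at $E_s$ with the orbifold relation $\gamma^{n_\gamma}=1$ allows one to rewrite the conjugate $\gamma \alpha_i \gamma^{-1}$ as an automorphism of $H$ (in small examples an inversion or multiplication-by-$k$ on a cyclic factor, as one can read off directly from the presentations produced in the proofs of Propositions~\ref{prop:B_s=0} and~\ref{prop:caso-chain-rational}). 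The conceptual reason is that the link of $(X, B; x)$ is a graph manifold whose JSJ tori are dual to the edges of $\mathcal{D}(Z, B_Z)$, and whose Seifert fibers—precisely the $\alpha_j$'s—generate an abelian normal subgroup by standard graph-manifold theory. The main obstacle is carrying out the normality check uniformly across all the subcases appearing in those propositions, rather than relying on the graph-manifold black box.
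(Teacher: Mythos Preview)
Your approach coincides with the paper's: rule out the elliptic case by coregularity, take $H=\mathbb{Z}^2$ in the cycle case via Proposition~\ref{prop:caso-cycle-rational}, and in the chain case let $H$ be generated by the loops $\alpha_i$ around the exceptional curves, observe it is abelian (the paper phrases this as $H=\langle x_1,x_2\rangle=\langle x_{m-1},x_m\rangle$ via Lemma~\ref{lem:hirzebruch-jung-group}, which is the same induction you run), and then verify normality by examining how the remaining generators conjugate $H$.

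The place where your write-up stops short is exactly where the paper does the work. You say the conjugation can be ``read off from the presentations produced in the proofs of Propositions~\ref{prop:B_s=0} and~\ref{prop:caso-chain-rational}'' and flag this as the main obstacle; the paper carries this out. It records that each remaining generator $\gamma$ falls into one of four concrete types according to whether $b(\Delta_1+\hat B_s|_{E_1})$ (resp.\ at $E_m$) equals $((2,2))$ or $((n))$: in the $((2,2))$ case $\gamma$ commutes with $x_1$ and one computes $\gamma x_2\gamma^{-1}=x_1^2x_2^{-1}$ (and symmetrically at the other end), while in the $((n))$ case $\gamma$ already lies in $\langle x_1,x_2\rangle$ (resp.\ $\langle x_{m-1},x_m\rangle$). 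This last point also covers something you do not mention: besides loops around fractional components of $B_Z$, there are the loops coming from the Hirzebruch--Jung chains resolving the klt points of $Z$ at the endpoints, and it is Lemma~\ref{lem:hirzebruch-jung-group} that shows these lie in $H$ rather than merely normalizing it. Your graph-manifold remark is a reasonable heuristic, but the paper's proof is purely the finite case check you anticipated.
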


\begin{proof}
By Lemma~\ref{lem:dual-graph}, the exceptional divisor is either a cycle or a chain of rational curves:

If it is a cycle of rational curves, then $H\cong \zz ^2$ is a normal subgroup of $ \pi_1^{\rm reg}(Z,B_Z)$, by Proposition~\ref{prop:caso-cycle-rational}. Hence, the Lemma holds. 

If it is a chain of rational curves, let $H < \pi_1^{\rm reg}(Z,B_Z)$ be the subgroup generated by the loops around the exceptional divisor of $\phi$.
Then we call $x_1,\ldots x_m$ the loops around the divisors $E_1.\ldots,E_m$.
By Lemma~\ref{lem:hirzebruch-jung-group} and Lemma~\ref{lem:Mumford-pairs}, $H=\langle x_1,x_2\rangle = \langle x_{m-1},x_m\rangle$. $H$ is abelian, since adjacent $x_i'$s commute.
Now, we turn to prove that $H$ is a normal subgroup of $\pi_1^{\rm reg}(X,B;x)$.
By the proof of Proposition~\ref{prop:B_s=0} and Proposition~\ref{prop:caso-chain-rational}, 
the remaining generators $\gamma$
of $\pi_1^{\rm reg}(X,B;x)$ 
satisfy one of the following:
\begin{itemize}
    \item they commute with $x_1$, and $\gamma x_2\gamma^{-1}=x_1^2x_2^{-1}$, whenever $b(\Delta_1+\hat{B_s}|_{E_1})=((2,2))$ holds,
    \item they commute with $x_m$, and $\gamma x_{m-1}\gamma^{-1}=x_m^2x_{m-1}^{-1}$, whenever $b(\Delta_m+\hat{B_s}|_{E_m})=((2,2))$ holds,
    \item they are generated by $x_1,x_2$, whenever   $b(\Delta_1+\hat{B_s}|_{E_1})=((n))$ holds, or 
    \item they are generated by $x_{m-1},x_m$,
    whenever
    $b(\Delta_m+\hat{B_s}|_{E_m})=((n))$ holds. 
\end{itemize}
We conclude that $H$ is normal.
\end{proof}

\begin{proof}[Proof of Theorem~\ref{introthm:solvable}]
We start with the case where $(X,B,x)$ is a log canonical singularity of coregularity one.
We take a dlt modification $\phi: (Z,B_Z) \rightarrow (X,B)$. As we are in the coregularity one case, by Lemma~\ref{lem:dual-graph} the exceptional divisor of $\phi$ is a unique rational curve or an elliptic curve.\\ 

If the exceptional is an elliptic curve, then as in the proof of Proposition~\ref{prop:caso-eliptic}, we have:
\[
1 \rightarrow \pi_1(S^1)\simeq \mathbb{Z} \rightarrow \pi_1^{\rm reg}(Z,B_Z) \rightarrow \pi_1(\mathbb{T}^2)\simeq \mathbb{Z}^2 \rightarrow 1.
\]
Therefore, $\pi_1^{\rm reg}(Z,B_Z)$ is solvable of length 2.

Now, we assume that the exceptional of the dlt modification is a rational curve. 
First, assume that $B_s=0$.
By the proof of Proposition~\ref{prop:B_s=0}, we have one of the following isomorphisms:

\begin{itemize}
\item $\pi_1^{\rm reg}(X,B;x) \simeq \langle a,b,c,d,x \mid xa^{-2},xb^{-2},xc^{-2},xd^{-2},abcdx^{-m}  \rangle. $

In this case, we can define $N=\langle ab,bc,x \rangle$. Then, we have that
\[ 1\rightarrow N \rightarrow \pi_1^{\rm reg}(X,B;x) \rightarrow  \mathbb{Z}/2\mathbb{Z} \rightarrow 1,\]
and $N$ is nilpotent of length 2 as $N/\langle x \rangle \simeq \mathbb{Z}^2$.

\item $\pi_1^{\rm reg}(X,B;x) \simeq \langle a_1,b_1,c_1,x \mid xa_1^{-n},xb_1^{-n'},xc_1^{-n''},a_1^{t}b_1^{t'}c_{1}^{t''}x^{-m} \rangle,
$
where  $b(\Delta)=((n,n',n''))$ is as in equation~\ref{eq:subse}.
We have that $x$ is in the center of $\pi_1^{\rm reg}(X,B;x)$ and we can define

\[G':=\pi_1^{\rm reg}(X,B;x)/\langle x \rangle \simeq \langle a_1,b_1,c_1 \mid a^n,b^{n'},c^{n''}, abc \rangle. \]

Where $a=a_1^{t},b=b_1^{t'},c=c_1^{t''}.$
Now, we have to check four different cases. 

\begin{enumerate}
\item If $b(\Delta)=((3,3,3))$, then $G'$ has an abelian normal subgroup $N'=\langle ab^{-1},a^{-1}b \rangle$ such that the following exact sequence holds

\[1 \rightarrow N' \rightarrow \pi_1^{\rm reg}(X,B;x)/\langle x \rangle \rightarrow \mathbb{Z}/3\mathbb{Z} \rightarrow 1.\]

Therefore, we can define $N=\langle x,a_1^tb_1^{-t'},a_1^{-t}b_1^{t'} \rangle$ and we have

\[1 \rightarrow N \rightarrow \pi_1^{\rm reg}(X,B;x) \rightarrow \mathbb{Z}/3\mathbb{Z} \rightarrow 1.\]

As
$N/\langle x \rangle \simeq \mathbb{Z}^2$, the group $N$ is nilpotent of length 2.

\item If $b(\Delta)=((2,4,4))$, then $G'$ has an abelian normal subgroup $N'=\langle bc^{-1},b^{-1}c \rangle$ such that the following exact sequence holds

\[1 \rightarrow N' \rightarrow \pi_1^{\rm reg}(X,B;x)/\langle x \rangle \rightarrow \mathbb{Z}/4\mathbb{Z} \rightarrow 1.\]

Therefore, we can define $N=\langle x,b_1^{t'}c_1^{-t''},b_1^{-t'}c_1^{t''}\rangle$ and we have

\[1 \rightarrow N \rightarrow \pi_1^{\rm reg}(X,B;x) \rightarrow \mathbb{Z}/4\mathbb{Z} \rightarrow 1.\]

As
$N/\langle x \rangle \simeq \mathbb{Z}^2$, the group $N$ is nilpotent of length 2.

\item If $b(\Delta)=((2,3,6))$, then $G'$ has an abelian normal subgroup $N'=\langle bc^{4},c^{4}b \rangle$ such that the following exact sequence holds

\[1 \rightarrow N' \rightarrow \pi_1^{\rm reg}(X,B;x)/\langle x \rangle \rightarrow \mathbb{Z}/6\mathbb{Z} \rightarrow 1.\]

Therefore, we can define $N=\langle x,b_1^{t'}c_1^{4t''},c_1^{4t''}b_1^{t'} \rangle$ and we have

\[1 \rightarrow N \rightarrow \pi_1^{\rm reg}(X,B;x) \rightarrow \mathbb{Z}/6\mathbb{Z} \rightarrow 1.\]

As
$N/\langle x \rangle \simeq \mathbb{Z}^2$, the group $N$ is nilpotent of length 2.

\item If $b(\Delta)=((n,n')), ((n)), \text{ or } ((\emptyset))$, then we have that $G'\simeq \langle a,b \mid a^n,b^{n'},ab \rangle  \simeq \mathbb{Z}/m \mathbb{Z}.$ Therefore, $\pi_1^{\rm reg}(X,B;x)$ is nilpotent of length 2. 
\end{enumerate}

\item $\pi_1^{\rm reg}(X,B;x) \simeq \mathbb{Z} /n \mathbb{Z}.$
This is already an abelian group, hence the proposition is trivial in this case.
\end{itemize}

Now assume $B_s\neq 0$.
By the proof of Proposition~\ref{prop:caso-chain-rational}, we have one of the following isomorphisms

\begin{itemize}
\item $\pi_1^{\rm reg}(X,B;x) \simeq  \langle a,b,c,d,x \mid \delta_{2,x}(a),\delta_{2,x}(b),\delta_{2,x}(c),\delta_{2,x}(d),abcdx^{-m}\rangle.$

Here  we can define $N=\langle ab,bc,x \rangle$ and we have

\[ 1\rightarrow N \rightarrow \pi_1^{\rm reg}(X,B;x) \rightarrow \mathbb{Z}/2\mathbb{Z} \rightarrow 1.\]

The group $N$ is nilpotent of length 2, as $N/\langle x \rangle \simeq \mathbb{Z}^2$.

\item $\pi_1^{\rm reg}(X,B;x) \simeq   \langle a_1,b_1,c_1,x \mid \delta_{n_1,x}(a_1),\delta_{n_2,x}(b_1),\delta_{n_3,x}(c_1),a_1^{t_1}b_1^{t_2}c_1^{t_3}x^{-m}\rangle $.

We have that $x$ is in the center of $\pi_1^{\rm reg}(X,B;x)$ and we can define

\[G':=\pi_1^{\rm reg}(X,B;x)/\langle x \rangle \simeq \langle a,b,c \mid a^n,b^{n'},c^{n''}, abc \rangle. \]

Where $a=a_1^{t},b=b_1^{t'},c=c_1^{t''}.$ Hence, the subgroup $N$ can be obtained as when $B_s=0$.\\
\end{itemize}

Now, we prove the case where $(X,B,x)$ is a log canonical singularity of coregularity zero. We take a dlt modification $\phi: (Z,B_Z) \rightarrow (X,B)$. As we are in the coregularity zero case, by Lemma \ref{lem:dual-graph} the exceptional divisor is a chain of rational curves or a cycle of rational curves.

We use the notation of Lemma~\ref{lem:normal-subgroup} for the normal subgroup $H$.
If it is a cycle of rational curves, then by Proposition~\ref{prop:caso-cycle-rational}, we have the exact sequence $1 \rightarrow H \cong \zz^2 \rightarrow  \pi_1^{\rm reg}(X,B;x) \rightarrow \zz \rightarrow 1$, hence $ \pi_1^{\rm reg}(X,B;x)$ is solvable of length 2.

If it is a chain of rational curves. First, assume $B_s=0$. By the proof Proposition~\ref{prop:B_s=0}, we have one of the following isomorphisms:

\begin{itemize}
\item $\pi_1^{\rm reg}(X,B;x) \simeq \langle a,b \mid a^2b^{-2},(ab)^{a_n-ma_{n+1}}a^{2b_n-2mb_{n+1}} \rangle.$

Here we can define $N= \langle ab, a^2 \rangle=H$, and we get:
\[ 1\rightarrow N \rightarrow \pi_1^{\rm reg}(X,B;x) \rightarrow  \mathbb{Z}/2\mathbb{Z} \rightarrow 1.\]

The group $N$ is abelian.

\item $\pi_1^{\rm reg}(X,B;x) \simeq \langle a,b,c \mid a^2b^{-2},(ab)^{a_n}a^{2b_n}c^{-2},c^{2}((ab)^{a_{n-1}}a^{2b_{n-1}}((ab)^{a_n}a^{2b_n})^{-m_n}c)^{-2} \rangle.$

Here, we can define $N= \langle ac,ab,a^2 \rangle=\langle ac,H\rangle$ and we get:
\[ 1\rightarrow N \rightarrow \pi_1^{\rm reg}(X,B;x) \rightarrow  \mathbb{Z}/2\mathbb{Z} \rightarrow 1.\]

The group $N$ is solvable of length 2 as $N/H \simeq \mathbb{Z}$

\end{itemize}
    
Now assume that $B_s \neq 0$. By the proof of Proposition~\ref{prop:caso-chain-rational}, we have one of the following isomorphisms:

\begin{itemize}
\item $\pi_1^{\rm reg}(X,B;x) \simeq \langle a,b,x_1,c_1 \mid \delta_{2,x_1}(a),\delta_{2,x_1}(b),c_1^{t'}(ab)^{t_1}x_1^{t_2},\delta_{r,(ab)^{t_3}x_1^{t_4}}(c_{1})\rangle.$

We can define $N= \langle ab, ac, x_1 \rangle=\langle ac,H\rangle$ and we get:
\[ 1\rightarrow N \rightarrow \pi_1^{\rm reg}(X,B;x) \rightarrow  \mathbb{Z}/2\mathbb{Z} \rightarrow 1.\]
The group $N$ is solvable of length 2, as $N/H \simeq \mathbb{Z}$

\item $\pi_1^{\rm reg}(X,B;x) \simeq \langle a,b,c, \mid \delta_{2,a^2}(b),\delta_{2,(ab)^{a_n}a^{2b_n}}(c),((ab)^{a_{n-1}}a^{2b_{n-1}}((ab)^{a_n}a^{2b_n})^{-m_n}c)^2\rangle.$

We can define $N= \langle ab, ac, a^2 \rangle=\langle ac,H\rangle$ and we get:
\[ 1\rightarrow N \rightarrow \pi_1^{\rm reg}(X,B;x) \rightarrow  \mathbb{Z}/2\mathbb{Z} \rightarrow 1.\]
The group $N$ is solvable of length 2, as $N/H \simeq \mathbb{Z}$

\item $\pi_1^{\rm reg}(X,B;x) \simeq \langle a,b,c,x_1  \mid a^{2},b^{2},[a,x_1],[b,x_1],c^{2},((ab)^{m_na_n-a_{n-1}}x_1^{m_nb_n-b_{n-1}}c)^{2},[c,(ab)^{a_n}x_1^{b_n}] \rangle.$

We can define $N= \langle ab, ac, x_1 \rangle=\langle ac,H\rangle$ and we get:
\[ 1\rightarrow N \rightarrow \pi_1^{\rm reg}(X,B;x) \rightarrow  \mathbb{Z}/2\mathbb{Z} \rightarrow 1.\]
$N$ is solvable of length 2, as $N/H \simeq \mathbb{Z}$

\item $\pi_1^{\rm reg}(X,B;x) \simeq \langle a_1,x_1,c_1 \mid \delta_{n_1,x_1}(a_1),c_1^{t'}(a_1)^{t_1}x_1^{t_2},\delta_{n_2,a_1^{t_3}x_1^{t_4}}(c_{1}) \rangle.$

Here  $H=\pi_1^{\rm reg}(X,B;x)$, hence the group is abelian.

\end{itemize}
\end{proof}

\subsection{Log Calabi--Yau surfaces}
\label{subsec:CY-surf}
In this subsection,
we prove some results regarding the fundamental group 
of log Calabi--Yau surfaces.
To do so, we will start with some lemmata.
The first lemma allows us to understand the regional fundamental group 
under contractions of curves.

\begin{lemma}\label{lem:MMP-vs-reg-fun}
Let $(X,B)$ be a $2$-dimensional log Calabi--Yau pair.
Let $X\rightarrow Y$
be a divisorial contraction. 
Let $B_Y$ be the push-forward of $B$ to $Y$.
Then, there is a surjective homomorphism
\[
\pi_1^{\rm reg}(Y,B_Y)\rightarrow \pi_1^{\rm reg}(X,B).
\]
\end{lemma}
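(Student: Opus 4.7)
\emph{Proof plan.} Let $f\colon X\to Y$ be the given divisorial contraction, let $E\subset X$ be the irreducible exceptional divisor, and let $y:=f(E)\in Y$. Write
\[
U_X := X^{\rm sm}\setminus \supp(B_s),\qquad U_Y := Y^{\rm sm}\setminus \supp(B_{Y,s}),\qquad W := U_X\setminus E,
\]
so that $\pi_1^{\rm reg}(X,B)$ and $\pi_1^{\rm reg}(Y,B_Y)$ are computed as quotients of $\pi_1(U_X)$ and $\pi_1(U_Y)$, respectively, by normal closures of loops of the form $\gamma_i^{n_i}$ around the components of the standard approximations. Because $f$ is an isomorphism on $X\setminus E$, it restricts to an isomorphism $W\xrightarrow{\sim} U_Y\setminus\{y\}$. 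The plan is to compare $\pi_1(U_X)$ and $\pi_1(U_Y)$ through the common piece $W$ and then descend to the regional quotients.

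First I would argue that $\pi_1(W)\to\pi_1(U_Y)$ is an isomorphism. The complement $U_Y\setminus W$ is contained in $\{y\}$, and $y$ can lie in $U_Y$ only when $y$ is a smooth point of $Y$ outside $\supp(B_{Y,s})$; in that case removing a single smooth point from a complex surface does not affect the fundamental group.

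Next I would compute $\pi_1(U_X)$ from $\pi_1(W)$. If $E$ is a component of $\supp(B_s)$ then $E\cap U_X=\emptyset$, so $W=U_X$ and the map is trivially an isomorphism. Otherwise $E^*:=E\cap U_X$ is a nonempty irreducible curve in the smooth surface $U_X$, and the Zariski--Van Kampen theorem yields
\[
\pi_1(U_X)\;=\;\pi_1(W)/\langle \gamma_E\rangle_n,
\]
where $\gamma_E$ is a meridian loop around $E^*$. In either case $\pi_1(W)\twoheadrightarrow \pi_1(U_X)$ is surjective, and composing with the previous step produces the surjection $\pi_1(U_Y)\cong \pi_1(W)\twoheadrightarrow \pi_1(U_X)$.

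Finally I would check that this surjection descends to regional fundamental groups. Under $f$, the non-exceptional components of $B_s$ correspond bijectively to the components of $B_{Y,s}$ with the same standard coefficients, and a meridian around such a component on $Y$ is identified with the meridian around its strict transform on $X$; hence every defining relation $\gamma_{Y,i}^{n_i}=1$ of $\pi_1^{\rm reg}(Y,B_Y)$ is sent to the corresponding relation of $\pi_1^{\rm reg}(X,B)$. If $E$ itself is a component of $B_s$ the extra relation $\gamma_E^{m_E}=1$ only further quotients the image on the $X$-side. The main technical point is to verify that the Zariski--Van Kampen kernel is normally generated by a single meridian loop: this is where the irreducibility of $E$ enters, together with smoothness of the ambient surface $U_X$. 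The log Calabi--Yau hypothesis plays no essential role in the topological argument beyond guaranteeing that $B_Y=f_*B$ is a well-defined boundary making $(Y,B_Y)$ into a log pair.
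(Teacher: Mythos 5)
Your argument is correct and follows essentially the same route as the paper's proof: identify the complement of the exceptional curve $C=E$ with the complement of its image point $y$, observe that deleting $y$ does not change the (regional) fundamental group of $Y$, and use surjectivity of $\pi_1$ of the complement of a divisor onto $\pi_1$ of the ambient smooth surface, checking that the standard-coefficient relations match up. The only difference is that you invoke the full Zariski--Van Kampen description of the kernel, which is more than is needed --- surjectivity of $\pi_1(U_X\setminus E)\to\pi_1(U_X)$ already suffices, as in the paper.
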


\begin{proof}
Let $C\subset X$ be the exceptional curve.
If the image of $C$ on $Y$ is a smooth point $y$, then we have
\[
\pi_1^{\rm reg}(Y,B_Y)\simeq 
\pi_1^{\rm reg}(Y\setminus\{y\},B_Y)
\simeq 
\pi_1^{\rm reg}(X\setminus C, 
B|_{X\setminus C}) 
\rightarrow 
\pi_1^{\rm reg}(X,B),
\]
where the last homomorphism is surjective. 
On the other hand, if $y$ lies in the smooth locus, then we have that
\[
\pi_1^{\rm reg}(Y,B_Y)\simeq 
\pi_1^{\rm reg}(X\setminus C,B|_{X\setminus C})
\rightarrow 
\pi_1^{\rm reg}(X,B),
\]
where the last homomorphism is surjective.
This finishes the proof.
\end{proof}

The second lemma allows us to understand the regional fundamental group of log pairs
admitting fibrations.
It can be regarded as a version for pairs of~\cite[Lemma 1.5.C]{Nor83}.

\begin{lemma}\label{lem:fun-reg-vs-cbf}
Let $(X,B)$ be a log Calabi--Yau
surface.
Let $\phi\colon X\rightarrow C$ be a Mori fiber space to a curve.
Let $B_s$ be the standard approximation of $B$.
For each point $p\in C$, we denote by $n_p$ the positive integer for which 
${\rm coeff}_{\phi^{-1}(p)}(B_s)=1-n_P^{-1}$.
For each $p\in C$, we let $m_p$ be the multiplicity of the fiber over $p$. 
Consider the boundary 
$B_C=\sum_{p\in C}1-(n_pm_p)^{-1}$.
Then, there is a short exact sequence
\[
\pi_1^{\rm reg}(F,B|_F)
\rightarrow 
\pi_1^{\rm reg}(X,B)
\rightarrow 
\pi_1^{\rm reg}(C,B_C)\rightarrow 1,
\]
where $F$ is a general fiber of $\phi$.
Furthermore, the pair $(C,B_C)$ is of log Calabi--Yau type, i.e., there exists
$\Gamma_C\geq B_C$ for which $(C,\Gamma_C)$ is log Calabi--Yau.
\end{lemma}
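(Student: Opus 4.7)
The plan is to imitate Nori's proof of \cite[Lemma 1.5.C]{Nor83}, which treats the case $B=0$, while carefully tracking the orbifold relations introduced by the components of $B_s$ and by the multiple fibers of $\phi$. First, I would restrict to a Zariski-open set where everything is as smooth as possible. Let $V \subset C$ be the open subset over which $\phi$ is smooth, no fiber is contained in $\Supp(B_s)$, and every point $p \in V$ satisfies $n_p = 1$ and $m_p = 1$; set $U := \phi^{-1}(V) \setminus \Supp(B_s|_{\phi^{-1}(V)})$. Then $\phi|_U \colon U \to V$ is a smooth submersion with fiber $F^\circ := F \setminus \Supp(B_s|_F)$, so the homotopy long exact sequence of a fibration yields
\[
\pi_1(F^\circ) \;\longrightarrow\; \pi_1(U) \;\longrightarrow\; \pi_1(V) \;\longrightarrow\; 1.
\]

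Next, I would reintroduce the omitted divisors and analyze the extra relations that define $\pi_1^{\rm reg}$. The horizontal components of $B_s$ restrict to components of $B_s|_F$, so the orbifold relation $\gamma_i^{n_i}=1$ for each such component $B_i$ is already a relation in the image of $\pi_1^{\rm reg}(F,B|_F)$; these modifications therefore occur entirely in the first term of the sequence. The essential computation concerns the vertical part. For $p \in C$ with multiple fiber of multiplicity $m_p$ and with $\coeff_{\phi^{-1}(p)}(B_s) = 1 - n_p^{-1}$, the key observation is that a small loop $\gamma_{\phi^{-1}(p)}$ around the reduced fiber $\phi^{-1}(p)$ coincides, after projection to $\pi_1$ of a punctured disk around $p$ in $C$, with $\gamma_p^{m_p}$, where $\gamma_p$ is a loop around $p \in C$. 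Consequently, each relation $\gamma_{\phi^{-1}(p)}^{n_p}=1$ pushes down to $\gamma_p^{n_p m_p} = 1$, which is exactly the orbifold relation for the coefficient $1 - (n_p m_p)^{-1}$ on $C$. In particular, even for $p$ with $n_p = 1$ but $m_p > 1$, the topology of the Seifert-type neighborhood forces the relation $\gamma_p^{m_p}=1$ to hold after dividing by the fiber group. Collating these relations gives precisely $\pi_1^{\rm reg}(C,B_C)$ on the right.

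For the final claim that $(C,B_C)$ is of log Calabi--Yau type, I would invoke the canonical bundle formula applied to the lc-trivial fibration $\phi\colon(X,B)\to C$ (which exists since $K_X+B\sim_\qq 0$). This produces a decomposition
\[
K_X + B \sim_\qq \phi^{*}(K_C + B_C^{\rm disc} + M_C),
\]
where $B_C^{\rm disc}$ is the discriminant and $M_C$ is the pseudo-effective moduli part. A direct computation of $\lct_p(X, B - \phi^{-1}(p); \phi^{-1}(p))$ in a neighborhood of a multiple fiber, using $B_s$ and the multiplicity $m_p$, shows that $B_C^{\rm disc} = B_C$ as defined above. Thus $\Gamma_C := B_C + M_C \geq B_C$ is effective and $(C,\Gamma_C)$ is log canonical with $K_C + \Gamma_C \sim_\qq 0$, exhibiting $(C,B_C)$ as log Calabi--Yau type.

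The principal difficulty I anticipate is the bookkeeping in the second step: verifying that the relations $\gamma^{n_p}=1$ coming from vertical components of $B_s$ and the implicit relations $\gamma^{m_p}=1$ forced by multiplicity are neither redundant nor in conflict, and that together they exactly cut out the orbifold relation $\gamma_p^{n_p m_p}=1$ on the base. One convenient way to do this is to work locally over a disk around each critical $p$, describe $\phi^{-1}(\Delta_p)$ as a Seifert fibered piece, and then glue; this avoids having to argue globally with a presentation of the base group at once.
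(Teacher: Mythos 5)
Your proposal is correct and follows essentially the same route as the paper: the paper also starts from Nori's fibration exact sequence \cite[Lemma 1.5.C]{Nor83}, uses the same local computation that a loop around the reduced fiber over $p$ maps to $\gamma_p^{m_p}$ (so that $\gamma_{\phi^{-1}(p)}^{n_p}$ pushes down to the orbifold relation $\gamma_p^{n_pm_p}$), and deduces the log Calabi--Yau type statement from the canonical bundle formula. The only difference is presentational — the paper organizes your ``collation of relations'' as a diagram chase between the honest and regional fundamental groups — and your assertion that the discriminant equals $B_C$ exactly is slightly too strong (it can be larger when $B\neq B_s$ along a fiber or at degenerate fibers), but only the inequality $\Gamma_C\geq B_C$ is needed, so this is harmless.
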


\begin{proof}
By~\cite[Lemma 1.5.C]{Nor83}, there is a short exact sequence 
\[
\pi_1(F\setminus B_s|_F)
\rightarrow 
\pi_1(X\setminus B_s)
\rightarrow 
\pi_1(C\setminus B_C) 
\rightarrow 1. 
\]
This induces a commutative diagram
\[
\xymatrix{
\pi_1(F\setminus B_s|_F) \ar[r]^-{p_1}
\ar[d]^-{\phi_F}
&
\pi_1(X\setminus B)\ar[r]^-{p_2}
\ar[d]^-{\phi_X} 
&
\pi_1(C\setminus B_C)
\ar[d]^-{\phi_C}\ar[r]
& 
1\\
\pi_1^{\rm reg}(F,B_F)\ar[r]^-{\psi_1} 
&
\pi_1^{\rm reg}(X,B)\ar[r]^-{\psi_2}
&
\pi_1^{\rm reg}(C,B_C)\ar[r] 
&
1
}
\]
where the top row is exact
and the vertical arrows are surjective. 
We show the exactness 
at $\pi_1^{\rm reg}(X,B)$
as the surjectivity of $\psi$ 
is analogous.
Let $\beta \in \pi_1^{\rm reg}(X,B)$ an element that maps to zero in $\pi_1^{\rm reg}(C,B_C)$. 
Let $\beta_0$ be a lifting to $\pi_1(X\setminus B)$. 
Let $\beta_1$ be the image of $\beta_0$ in $\pi_1(C\setminus B_C)$.
We conclude that 
$\beta_1=\prod_{i=1}^k n_i\gamma_{p_i}^{n_im_i} n_i^{-1}$
where $\gamma_{p_i}$ is a loop
around the point $p_i\in C$.
We denote by $\gamma_i$ a loop
around $\phi^{-1}(p)$.
A local computations 
shows that the element 
$\gamma_0=\prod_{i=1}^k n_i' \gamma_i^{n_i}{n_i'}^{-1}$ maps to $\beta_1$,
where the $n_i'$ are arbitrary liftings of the $n_i$'s. 
By the exactness of the top row, there exists an element $\beta_F$ in $p_1(F\setminus B_s|_F)$ 
such that $\beta_0=p_1(\beta_F)\gamma_0$.
By construction,
$\gamma_0$ is in the kernel of $\phi_X$.
We conclude that 
$\phi_X(\beta_0)=\phi_X(p_1(\beta_F))=
\psi_1(\phi_F(\beta_F))$.
We conclude that 
$\beta'_F:=\phi_F(\beta_F)$ is a lifting to $\pi_1^{\rm reg}(F,B_F)$ of $\beta$.
This finishes the proof of the exactness.

The fact that $(C,B_C)$ is a log Calabi--Yau type pair follows from 
the canonical bundle formula.
\end{proof}

Now, we turn to prove a theorem regarding the number of free generators of the abelianization
of fundamental groups of log Calabi--Yau surfaces.

\begin{proof}[Proof of Theorem~\ref{introthm:2-dim-lcy}]
First, note that if 
$(C,B_C)$ is of log Calabi--Yau type, then
$\rank(\pi_1^{\rm reg}(C,B_C)^{\rm ab}_\qq)\leq 2$.

Let $(X,B)$ be a log Calabi--Yau surface.
We run a $K_X$-MMP
that we denote by $X\rightarrow X'$.
Let $B'$ be the push-forward of $B$ to $X'$.
By Lemma~\ref{lem:MMP-vs-reg-fun}, 
we may replace $(X,B)$ with $(X',B')$
in order to prove the statement.
Hence, we may assume that one of the following statements holds:
\begin{itemize}
    \item[(i)] the variety $X$ is Calabi--Yau and $B=0$,
    \item[(ii)] the variety $X$ admits a Mori fiber space to a curve $X\rightarrow C$, or
    \item[(iii)] the variety $X$ is Fano of Picard rank one.
\end{itemize}

Assume that $X$ is Calabi--Yau.
Let $Y\rightarrow X$ be the index one cover of $K_X$, so $Y$ is Calabi--Yau
with canonical singularities
and $K_Y\sim 0$.
It suffices to prove the statement for $Y$.
The minimal resolution of $Y$ is either an abelian surface or a K3 surface.
If the minimal resolution of $Y$ is an abelian surface, 
then $Y$ is itself an abelian surface and the statement follows.
We may replace $X$ with $Y$ and assume that its minimal resolution is a K3 surface.
Assume that 
\[
\pi_1^{\rm reg}(X)\rightarrow \pi_1^{\rm reg}(X)^{\rm ab}\simeq \zz \oplus A,
\]
where $A$ is a finitely generated abelian group.
We let $N_k:=k\zz \oplus A \leqslant \pi_1^{\rm reg}(X)$
 a normal subgroup of index $k$.
 Then, we have a short exact sequence 
\[
1\rightarrow N'_k\rightarrow \pi_1^{\rm reg}(X)\rightarrow \zz/ k\zz\rightarrow 1.
\]
We denote $G_k:=\zz/k\zz$.
Let $Z_k\rightarrow X$ be the branched cover associated to $N'_k$.
Then, $Z_k$ admits the action of $G_k$.
Let $Z'_k$ be the $G_k$-equivariant minimal resolution of $Z_k$.
Let $X'_k$ be the quotient of $Z'_k$ by $G_k$.
We have a commutative diagram
\[
\xymatrix{
Z'_k \ar[r]\ar[d] & X'\ar[d]\\
Z\ar[r] & X.
}
\]
We may assume that $Z'_k$ is a smooth K3 surface.
Otherwise, $Z$ is an abelian surface
and the statement follows.
Note that $X'\rightarrow X$ only extracts divisors with log discrepancy at most $1$. 
Hence, the projective birational morphism $X'\rightarrow X$ is crepant.
This implies that $X'$ carries a nowhere vanishing $2$-form.
Hence, the automorphism group $G_k$ must act symplectically on the smooth K3 surface $Z'_k$.
This leads to a contradiction, as a finite group acting symplectically on a smooth K3 surface has order bounded above~\cite{Muk88}.

It suffices to prove the statement for $\pi_1(Z)$.
Let $Z'\rightarrow Z$ be a minimal resolution.
Then $Z'$ is a smooth Calabi--Yau variety
and $\pi_1(Z')\simeq \pi_1(Z)$.
Thus, the statement follows from the classification of smooth Calabi-Yau surfaces.

Assume that $X$ admits a Mori fiber space
$X\rightarrow C$ to a curve.
Then, the statement follows from Lemma~\ref{lem:fun-reg-vs-cbf}
and the first line of the proof.

Assume that $X$ is a Fano variety
of Picard rank one. 
By means of contradiction, assume that 
\[
\rho \colon \pi_1^{\rm reg}(X,B)\rightarrow 
\pi_1^{\rm reg}(X,B)^{\rm ab} 
\simeq \zz^m \oplus F,
\]
where $F$ is a finite group
and $m\geq 5$. 
Let $N_k=(k\zz)^m \oplus F \leqslant \pi_1^{\rm reg}(X,B)$
a normal subgroup of index $k^m$.
Let $N'_k:=\psi^{-1}(N_k)$.
Then, we have a short exact sequence
\[
1\rightarrow 
N'_k \rightarrow 
\pi_1^{\rm reg}(X,B) 
\rightarrow 
(\zz/k\zz)^m
\rightarrow 1.
\]
We denote $G_k:=(\zz/k\zz)^m$. 
Let $Y\rightarrow X$ be the branched
cover associated to $N'_k$.
Then $Y$ admits the action
of $G_k$ and the log pull-back of $(X,B)$ to $Y$ is
a $G_k$-equivariant log Calabi--Yau pair
$(Y,B_Y)$.
By assumption, we have 
that 
\[
\rank(\pi_1^{\rm reg}(Y,B_Y)_\qq^{\rm ab})
\geq m \geq 5.
\]
Then, by Lemma~\ref{lem:MMP-vs-reg-fun}, case (i), and case (ii) above, 
we conclude that
the $G_k$-equivariant MMP $Y\rightarrow Y'$ for $K_Y$
must terminate in a Fano surface of Picard rank one.
We obtain a Fano type surface
$Y'$ that admits the action
of $(\zz/k\zz)^m$ for $k$ arbitrarily large
and $m\geq 5$.
By~\cite[Lemma 2.24]{Mor21} there exists a
$G_k$-equivariant boundary $B_{Y'}$
for which $(Y',B_{Y'})$ is log Calabi--Yau
and $N(K_{Y'}+B_{Y'})\sim 0$,
where $N$ is independent of $k$.
This contradicts~\cite[Theorem 6]{Mor21}.
We conclude that $m\leq 2$ in case (iii).
This finishes the proof.
\end{proof}

To finish this subsection, we prove the following theorem regarding the  universal cover
of the smooth locus of normal K3 surfaces.
In the following proof, $\hat{\pi}(X)$ stands for the profinite completion of $\pi_1(X)$.
This theorem is not used in the rest of the paper,
but it is interesting on its own.

\begin{proof}[Proof of Theorem~\ref{introthm:universal-open-k3}]
Let $\overline{X}$ be a Calabi--Yau surface, i.e., a surface with klt singularities and $K_{\overline{X}}\sim_\qq 0$.
Let $X$ be the smooth locus of $\overline{X}$
and $\{x_1,\dots,x_k\}$
be the singular points of $\overline{X}$.
Let $p\colon \overline{Y}\rightarrow \overline{X}$ be the index one cover of $K_{\overline{X}}$.
Then, $\overline{Y}$ is a Calabi--Yau surface with $K_{\overline{Y}}$ and canonical singularities.
By~\cite[Theorem 1.5.(2)]{GKP16},
there exists a finite cover
$q\colon \overline{Z}\rightarrow \overline{Y}$,
possibly ramified over the singular points of $\overline{Y}$, 
such that the following isomorphisms hold:
\begin{equation} 
\label{eq:isom-fund-2}
\hat{\pi}_1(\overline{Z})
\simeq 
\hat{\pi}_1(\overline{Z}^{\rm reg})
\simeq 
\hat{\pi}_1(Z),
\end{equation}
where $Z:=\overline{Z}\setminus
\{
q^{-1}(p^{-1}(x_1)),\dots,q^{-1}(p^{-1}(x_k))
\}$.
Note that $\overline{Z}$
There are two cases, depending on the minimal resolution of $\overline{Y}$.
\begin{itemize}
    \item If the minimal resolution of $\overline{Z}$ is an abelian surface, then $\overline{Z}$ is itself an abelian surface.
    Let $r\colon \cc^2\rightarrow \overline{Z}$ be the universal cover of $\overline{Z}$.
    Let $W$ be the preimage of $Z$ on $\cc^2$ with respect to $r$. 
    By the isomorphism~\eqref{eq:isom-fund-2}, we conclude that $W$ is the universal cover of $X$.
    By construction, we have that $W$ is the complement of $\Lambda\mathcal{S}\subset \cc^2$, where $\Lambda$ is a lattice of rank $4$ and $\mathcal{S}$ is a finite collection of closed points in $\cc^2$. This implies case (i).
    \item If the minimal resolution of $\overline{Z}$ is a K3 surface, 
    then $\overline{Z}$ is simply connected.
    We conclude that $\pi(\overline{Z})$ is trivial and hence
    $\hat{\pi}_1(Z)$ is trivial.
    This implies that $Z$ is the \'etale universal cover of $X$.
    In particular, the \'etale universal cover of $X$ is the complement of finitely many points on a K3 surface with canonical singularities. 
    This leads to case (ii).
\end{itemize}
\end{proof}

\section{Smooth polyhedral complexes}\label{sec:pol-complex}

In this section, we study the relationship between smooth polyhedral complexes and log canonical singularities.
In this section, we start from a smooth polyhedral complex, 
we construct an snc Calabi--Yau variety, and then we construct a log canonical singularity.
We will proceed in such a way that the fundamental groups of these objects
are isomorphic.

\subsection{Polyhedral complexes}\label{subsec:pol-complex}
In this subsection, we introduce the concept of smooth polyhedral complexes, blow-ups of polyhedral complexes, and prove a couple of lemmas.

\begin{notation}
{\em 
Let $M$ be a finitely generated free abelian group. We set $N:={\rm Hom}(M;\zz)$, the dual space of $M$. We denote by $M_\qq := M\otimes_\zz \qq$ and
$N_\qq:=N\otimes_\zz \qq$ the associated $\qq$-vector spaces. We denote by $M_\rr:=M\otimes_\zz \rr$ and $N_\rr:=N\otimes_\zz \rr$ the associated $\rr$-vector spaces.
}
\end{notation}

\begin{definition}{\em 
A {\em polyhedron} in $M_\rr $ is the convex hull of finitely many points in $M_\qq \subset M_\rr$. Equivalently, it is the closed convex bounded subset of a real vector space $M_\rr$ that is defined by finitely many inequalities over the rational numbers.

A  {\em lattice polyhedron } is a polyhedron with vertices in the lattice $M \subset M_\qq$.
Let $P$ be a lattice polyhedron with a vertex $v$. For each edge $E$ of $P$ that contains $v$, let $w_E$ be the closest lattice point to $v$ in $E$.
Then $P$ is said to be {\em smooth at the vertex} $v$, if the vectors $w_E-v$
form a basis of the lattice $\zz^N$.
A {\em lattice polyhedron} $P$ is said to be {\em smooth} if it is smooth at every vertex.
For a polyhedron $P$ with vertex $v$, the {\em cone in $P$ with vertex $v$ }is defined as the cone generated by all the rays starting at $v$ and going through any point in $P$.
}
\end{definition}

\begin{definition}
{\em 
    Let $P$ be a polyhedron in $\rr ^n$ and $P'$ be the embedding of $P$ in $\rr ^{n+1}$, with coordinates $x_0,\dots,x_n$ by taking the first coordinate $x_0$ to be zero.
    Let $H_0:=\{(x_0,\dots,x_n)\mid x_0=0\}$.
    A {\em pyramid} over  $P$ is any polyhedron with the same cell structure as the convex hull of  $P'$ and a point $p$ not in $H_0$.
    A {\em bypyramid} over $P$ is any polyhedron with the same cell structure as the convex hull of $P'$ and points $p,q$, such that $p$ and $q$ are on different sides of the hyperplane $H_0$ and have projections to $H_0$ that lie inside of $P'$.
    Therefore, an $n$-dimensional simplex can be defined to be the pyramid over an $(n-1)$-dimensional simplex.
    }
\end{definition}

\begin{definition}{\em 
Given $M_\qq$ and $M'_\qq$ two $\qq$-vector spaces
with lattices $M$ and $M'$. We say that an affine $\qq$-linear map 
$f\colon M'_\qq\rightarrow M_\qq$ is a {\em lattice embedding} if $f$ induces an isomorphism of lattices
$M'\rightarrow f(M'_\qq)\cap M$. A {\em polyhedral complex} 
is a finite category $\mathcal{P}$ satisfying the following:
\begin{enumerate}
    \item the elements are lattice polyhedra, 
    \item the morphisms are lattice embeddings, 
    \item the morphisms are inclusions of polyhedra to faces, where we consider the entire polyhedron to be a face.
    \item for every $P\in \mathcal{P}$ its faces are in $\mathcal{P}$ and the inclusion of faces are morphisms in $\mathcal{P}$, and 
    \item for every $P_1,P_2\in \mathcal{P}$, there exists at most one morphism $f$ in $\mathcal{P}$ between $P_1$ and $P_2$.
\end{enumerate}
We denote by $|\mathcal{P}|$
the amalgamation of the polyhedral complex $\mathcal{P}$. We will sometimes write $\pi_1(\mathcal{P})$ to refer to $\pi_1(|\mathcal{P}|)$.

We say that $\mathcal{P}$ is an {\em $n$-dimensional polyhedral complex} if every maximal polyhedron has the same dimension $n$. In this case, the number $n$ is called the {\em dimension} of $\mathcal{P}$.
}
\end{definition}

We will usually say intersections and inclusions of polyhedra in $\mathcal{P}$ to refer to the intersections and inclusions that happen to the corresponding cells in the amalgamation $|\mathcal{P}|$.

\begin{definition}
{\em 
   Let $\mathcal{P}$ be an $n$-dimensional polyhedral complex. We define the {\em nerve} of a cell $P \in \mathcal{P}$ to be the complex of the maximal polyhedra containing $P$.
   In particular, the nerve of $P$ will have one vertex for each maximal polyhedron in $\mathcal{P}$ containing $P$. A set of vertices in the nerve of $P$ will be in the same $k$-dimensional face if the corresponding maximal polyhedra in $\mathcal{P}$ contain a common $(n-k)$-dimensional face. 
   
   We say a $k$-dimensional polyhedron $P$ is {\em combinatorially smooth} in an $n$-dimensional complex $\mathcal{P}$ if the nerve of $P$ is an $(n-k)$-dimensional simplex. We say that $\mathcal{P}$ is a {\em simple polyhedral complex} if it is $n$-dimensional and every polyhedron $P\in \mathcal{P}$ is combinatorially smooth in $\mathcal{P}$. We say that $\mathcal{P}$ is a {\em smooth polyhedral complex} if it is simple and its objects are smooth lattice polyhedra.
   In the case that the nerve is combinatorially equivalent to a polyhedron $Q$, we may simply say that the nerve is $Q$.
   }
\end{definition}

\begin{remark} 
{\em 
Hence, a polyhedral complex can fail to be smooth at $0$-dimensional cells by being combinatorially non-smooth at the cell or non-smooth at any polyhedral containing it. While for higher dimensional cells a polyhedral complex can only fail to be smooth by combinatorial non-smoothness at the cell or non-smoothness at lower dimensional cells.
}
\end{remark}

\begin{definition}
{\em Let $\mathcal{P}$ be a polyhedral complex, We define $m\mathcal{P}$ to be the polyhedral complex, defined by the following:
\begin{itemize}
    \item for each $Q\in \mathcal{P}$ with lattice $M$. The polyhedron $mQ$, defined by the vertices of $Q$ inside the lattice $\frac{1}{m}M$, is in $m\mathcal{P}$, and 
    \item for each morphism $f:Q\rightarrow R$ in $\mathcal{P}$, the morphism $f:mQ \rightarrow mR$ is in $m\mathcal{P}$.
\end{itemize}}
\end{definition}

\begin{remark}
{\em The polyhedral complex $m\mathcal{P}$ is smooth exactly when $\mathcal{P}$ is smooth. This is because we are simply changing the lattice generators from $\{e_i\}_i$ to $\{\frac{1}{m}e_i\}_i$. }
\end{remark}

\begin{definition}
{\em 
    Let $\mathcal{P}$ be an $n$-dimensional polyhedral complex. 
    Let $v\in \mathcal{P}$ be a vertex in the complex. 
    For each polyhedra $P_i \in \mathcal{P}$
    containing $p$, we can consider the cone $C_i$ spanned by $P_i$ with vertex $v$.
    Since the morphisms in $\mathcal{P}$
    are lattice inclusions, these cones form a complex of cones $\mathcal{C}(\mathcal{P},v)$ that we call the {\em neighborhood} of $\mathcal{P}$ at $v$.
    We say that the neighborhood of a vertex $v \in \mathcal{P}$ can be {\em embedded in $\qq^n$} if for each $C_i\in \mathcal{C}(\mathcal{P},v)$ there exists a linear map $C_i \rightarrow \qq^n$ such that the images of two cones $C_i'$, $C_j'$ in $\qq^n$ are contained in each other if and only if the corresponding cones are contained in each other in $|\mathcal{P}|$.
    }
\end{definition}

The following definition is motivated
by the concept of blow-up in algebraic geometry.

\begin{definition}
\label{def:blow-up-poly}
{\em 
Let $P$ be a polyhedron inside the $n$-dimensional polyhedral complex $\mathcal{P}$. We define a {\em blow-up of $\mathcal{P}$ at $P$} to be
a polyhedral complex $\mathcal{P}'$ with the following objects:
\begin{enumerate}
    \item for any polyhedron $Q$ disjoint with $P$, the polyhedron $Q$ is in $\mathcal{P}'$
    \item for any polyhedron $R$ intersecting $P$ but not contained in $P$, there exist polyhedra $R'$ and $R_P$ in $\mathcal{P}'$, satisfying the following: 
    \begin{enumerate}
        \item Let $H_R$ be a hyperplane $H_R$ separating $P \cap R \subset R$ and the vertices of $R \backslash P \cap R $. Let $H$ be the half-space defined by $H_R$ that does not contain $P$.
        The polyhedron $R'$ is the intersection of $R$ and $H$.
        \item $R_P$ is the intersection of $R$ and the hyperplane $H_R$.

    \end{enumerate}
    \item A polyhedron $P'$ whose faces are the polyhedron $R_P$ for each polyhedron $R$ intersecting $P$, not contained in $P$.
\end{enumerate}
Moreover, the morphisms are defined by the following rules:
\begin{enumerate}
    \item For any embedding $q:Q_1 \rightarrow Q_2$, where $Q_1$ is disjoint with P, the embedding $q$ is in $\mathcal{P'}$.
    \item  For any embedding $r:R_1 \rightarrow R_2$ between polyhedra that intersect $P$ but are not contained in $P$, the restrictions of the embedding $r:R_1 \rightarrow R_2$ to ${R_1}_P \rightarrow {R_2}_P$ and $R_1' \rightarrow R_2'$ are in $\mathcal{P}'$.
    \item For any $R$ intersecting $P$ but not contained in $P$, we have the morphism $i:R_P \rightarrow P'$.
\end{enumerate}
}
\end{definition}

\begin{remark}\label{rem:faces-blow-up}
{\em 
The maximal polyhedra of a blow-up are hence: 
\begin{enumerate}
    \item The polyhedron $Q$ for any maximal polyhedra $Q$ not intersecting $P$
    \item The polyhedron $R'$ for any maximal polyhedra $R$ intersecting $P$
    \item The polyhedron $P'$, which is combinatorially equivalent to the dual polyhedron of the nerve of $P$ in $\mathcal{P}$.
\end{enumerate}
}
\end{remark}

\begin{remark}\label{rem:blow-up-existence}
{\em 
A priori, it is not clear that a blow-up always exists, as we require the existence of the rational polyhedron $P'$.
}
\end{remark}

\begin{definition}
{\em 
    A lattice fan in $\zz^n$ is a {\em strongly polytopal fan} if the lattice generators of each ray are the vertices of a convex polytope.
    }
\end{definition}

\begin{lemma}\label{lem:blow-up-dim-3}
Let $\mathcal{P}$ be a 3-dimensional polyhedral complex.
Let $v\in \mathcal{P}$ be a vertex such that all polyhedra $P_i$ containing $v$ are smooth at $v$.
Assume that there is an embedding 
$\phi\colon \mathcal{C}(\mathcal{P},v)\rightarrow \qq^3$ as a strongly polytopal fan. 
Then, there exists a blow-up of $2\mathcal{P}$ at $v$.
\end{lemma}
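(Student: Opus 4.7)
The plan is to construct the blow-up explicitly using the strongly polytopal structure, by realizing the new top-dimensional polyhedron $P'$ as a rational scaling of the polytope guaranteed by that hypothesis. Using $\phi$ I place $v$ at the origin and let $u_1,\dots,u_k\in\zz^3$ be the primitive ray generators of the fan $\mathcal{C}(\mathcal{P},v)$, which correspond bijectively to the edges of $\mathcal{P}$ emanating from $v$. By strong polytopality the vectors $u_i$ are the vertices of a convex polytope $Q$ whose face lattice agrees with the cone structure of the fan: a subset of vertices spans a face of $Q$ exactly when the corresponding rays span a cone. Passing to $2\mathcal{P}$ refines the lattice so that each $\tfrac{1}{2}u_i$ becomes a lattice point, and I set
\[
P' := \mathrm{conv}\bigl(\tfrac{1}{2}u_1,\dots,\tfrac{1}{2}u_k\bigr)=\tfrac{1}{2}Q,
\]
a rational polytope combinatorially isomorphic to $Q$, which is the candidate ``dual of the nerve'' of $v$ demanded by Remark~\ref{rem:faces-blow-up}.

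Next I will define the truncating hyperplanes. For each maximal (three-dimensional) polyhedron $R$ of $\mathcal{P}$ containing $v$, smoothness at $v$ gives three primitive edge directions $u_{i_1},u_{i_2},u_{i_3}$ forming a $\zz$-basis of the lattice, so there is a unique $a_R\in\qq^3$ with $a_R\cdot u_{i_j}=1$ for each $j$. I then put
\[
H_R := \{x\in\qq^3 : a_R\cdot x=\tfrac{1}{2}\},\qquad R_P := R\cap H_R,\qquad R' := R\cap H_R^+,
\]
where $H_R^+$ is the closed half-space $\{a_R\cdot x\geq\tfrac{1}{2}\}$. For lower-dimensional $R$ through $v$, $H_R$ is taken to be the affine hull inside $R$ of the corresponding face of $P'$, and $R_P$, $R'$ are defined analogously; polyhedra of $\mathcal{P}$ disjoint from $v$ are copied unchanged. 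Together this produces all the data required by Definition~\ref{def:blow-up-poly}.

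What remains are three verifications: (i) that $H_R$ separates $v$ from the remaining vertices of $R$; (ii) that the cross-section $R\cap H_R$ is exactly the face of $P'$ with vertices $\tfrac{1}{2}u_{i_j}$; and (iii) that the various $R_P$'s glue consistently as the boundary faces of the single polytope $P'$. Step (i) follows from smoothness, since every vertex $w\neq v$ of $R$ lies in the cone at $v$ and hence equals $\sum_j n_j u_{i_j}$ with $n_j\in\zz_{\geq 0}$ not all zero, giving $a_R\cdot w=\sum_j n_j\geq 1>\tfrac{1}{2}$. Step (ii) is a direct computation, since $H_R$ meets the edge of $R$ along $u_{i_j}$ at its midpoint $\tfrac{1}{2}u_{i_j}$. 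The main obstacle is Step (iii): without the strongly polytopal hypothesis the $u_i$ could fail to be in convex position, or their convex hull could have faces not matching the fan, so that the triangle $R_P$ and a neighboring triangle $R'_P$ from an adjacent maximal polyhedron might fail to meet along the edge predicted by their common $2$-face through $v$. Strong polytopality is precisely what rules this out, and the factor of two in $2\mathcal{P}$ is exactly what ensures the vertices $\tfrac{1}{2}u_i$ of $P'$ lie in the refined lattice rather than being merely rational points of $\qq^3$.
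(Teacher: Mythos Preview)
Your argument is correct and follows the same core strategy as the paper: truncate each cone through $v$ by the hyperplane passing through the midpoints $\tfrac{1}{2}u_{i_j}$ of its primitive edge directions, and use strong polytopality to identify the new top cell $P'$ with $\tfrac{1}{2}Q$. The one packaging difference is that the paper lifts the whole configuration to $\qq^4$ by sending each primitive generator $e_j$ to $f_j:=(e_j,1)$; smoothness makes this a well-defined piecewise linear map whose image is the boundary of a single convex cone $\sigma\subset\qq^4$, and then all of your separate hyperplanes $H_R$ become the pullback of the single hyperplane $\{2x_4=1\}$, with $P'=\sigma\cap\{2x_4=1\}$. This buys the paper your Step~(iii) for free, since the $R_P$'s are literally the facets of one convex slice, whereas you must argue the gluing compatibility directly in $\qq^3$; conversely your presentation is more concrete and makes transparent why the factor of~$2$ is needed for lattice integrality.
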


\begin{proof}
Let $\mathcal{Q}:=\phi(\mathcal{C}(\mathcal{P},v))$ be the embedded fan of cones with vertex $v$ in $\qq^3$. 
Without loss of generality, let us assume that $\phi(v)$ is the origin. This subcomplex gives us a 3-dimensional complete fan around the origin, generated by rays $r_1, \ldots, r_i$, with lattice generators $e_1, \ldots, e_i$. 
Since all polyhedra are smooth at $v$, the cones are smooth. Hence, each point in the fan can be uniquely written as a linear combination of the generators in the cone that contains it. 
Therefore, taking $f_j:=(e_j,1) \in \qq^4$ for each generator, defines a piecewise linear map from $\qq^3$ to a convex cone $\sigma$ in $\qq^4$.
Any polyhedron in $\mathcal{P}$ containing $p$ is a subset of one of the cones with vertex $v$. Hence, they also are a face of $\sigma$.
Let $H$ be the hyperplane $\{2x_4=1\}$.
This hyperplane separates $(0,0,0,0)$ with all the other vertices of polyhedra in $\mathcal{Q}$.
We replace the lattice $\zz^4\subset \qq^4$ with $(\frac{1}{2}\zz)^4$ and correspondingly in all the polyhedra of the complex.
We perform the following replacements:
\begin{itemize}
    \item We replace any polyhedron containing $v$ in $\mathcal{Q}$ with its intersection with $H^+$, and
    \item we replace $v$ with the intersection of $H$ and the cone $\sigma$ in $\qq^4$.
\end{itemize}
The polyhedral complex $\mathcal{P}'$ obtained by performing these replacements
is a blow-up of $2\mathcal{P}$ at $v$.
\end{proof}

\begin{lemma}\label{lem:nerve-blowup}
    Let $\mathcal{P'}$ be a blow-up of $\mathcal{P}$ at $P$. Let $R\in \mathcal{P}$ be a polyhedron intersecting $P$, but not contained in $P$.
    If the nerve at $R\in \mathcal{P}$ is $Q$, then the nerve at $R_P\in \mathcal{P}'$ is a pyramid over $Q$.
\end{lemma}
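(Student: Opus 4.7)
The plan is to unpack the blow-up construction, enumerate the maximal polyhedra of $\mathcal{P}'$ containing $R_P$, and verify that their incidence structure coincides with that of a pyramid over $Q$ with apex corresponding to $P'$.

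First, I identify the maximal polyhedra of $\mathcal{P}'$ containing $R_P$. Let $M_1,\dots,M_k$ denote the maximal polyhedra of $\mathcal{P}$ containing $R$; by hypothesis these are the vertices of $Q$. Since $R\subset M_i$ while $R$ meets $P$ without being contained in $P$, the same holds for each $M_i$, so both $M_i'$ and $M_{i,P}$ are present in $\mathcal{P}'$, and the compatibility of the separating hyperplanes implicit in the existence of the blow-up (cf.~Remark~\ref{rem:blow-up-existence}) forces $R_P$ to be a face of $M_i'$. Moreover $R_P\subset P'$ by item (3) of Definition~\ref{def:blow-up-poly}. Any other maximal polyhedron of $\mathcal{P}'$ is either a maximal polyhedron of $\mathcal{P}$ disjoint from $P$ (and hence from $R_P$), or of the form $N'$ for some maximal $N\in\mathcal{P}$ not containing $R$; in the latter case $R\not\subset N$ forces $R_P\not\subset N'$. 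Hence the vertices of the nerve at $R_P$ are exactly $M_1',\dots,M_k',P'$, yielding $k+1$ vertices.

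Next, I compute the face structure. For each nonempty $I\subset\{1,\dots,k\}$, let $F_I$ denote the common face of $\{M_i\}_{i\in I}$ in $\mathcal{P}$, so that the face poset of $Q$ is encoded by the subsets $I$ for which such a common face containing $R$ exists. Since $R\subset F_I$, the polyhedron $F_I$ also meets $P$ without being contained in $P$, so $F_I'$ (of dimension $\dim F_I$) and $F_{I,P}$ (of dimension $\dim F_I-1$) are defined in $\mathcal{P}'$. A direct verification, using the same hyperplane compatibility, shows that $F_I'$ is the common face of $\{M_i'\}_{i\in I}$ in $\mathcal{P}'$ containing $R_P$, whereas $F_{I,P}$ is the common face of $\{M_i'\}_{i\in I}\cup\{P'\}$. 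Thus the sub-complex of the nerve at $R_P$ supported on $\{M_1',\dots,M_k'\}$ is combinatorially $Q$, and adjoining $P'$ to any face of $Q$ decreases the shared-face dimension by exactly one, matching the expected $+1$ shift in the nerve's face dimension.

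Finally, I recognize this face poset as that of a pyramid over $Q$ with apex $P'$: its faces are precisely the faces of $Q$ together with the apex cones $P'\ast G$ for each face $G$ of $Q$, and the dimension of the apex cone over $G$ is $\dim G+1$. The main subtle point is ensuring that the separating hyperplanes $H_R$, $H_{F_I}$, and $H_{M_i}$ can be chosen compatibly, so that $R_P\subset F_{I,P}\subset M_{i,P}$ as faces of $P'$ and of $M_i'$; this is not a separate argument but is part of what it means for the blow-up to exist, and once it is assumed, the above bijection of face posets is immediate.
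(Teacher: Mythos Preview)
Your proof is correct and follows essentially the same approach as the paper: identify that the maximal polyhedra of $\mathcal{P}'$ containing $R_P$ are exactly the $M_i'$ together with $P'$, and then verify that their incidence pattern is that of a pyramid over $Q$ with apex $P'$. The paper's argument is considerably terser, checking only the vertices and the pairwise intersections and then asserting the pyramid structure, whereas you explicitly track the common faces $F_I'$ and $F_{I,P}$ for arbitrary subsets $I$ to verify the full face poset; this extra care is justified given the definition of the nerve, but the underlying idea is the same.
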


\begin{proof}
    All maximal dimensional polyhedra $R_i$ that contain $R$ must also intersect $P$. Hence, in $\mathcal{P}'$ the maximal dimensional polyhedra that contain $R_P$ are the polyhedra $R_i'$ and the polyhedron $P'$. The polyhedra $R_i'$ and $R_j'$ intersect non-trivially whenever $R_i$ and $R_j$ do. The polyhedron $P'$ intersects non-trivially all the polyhedra $R_i$ by the definition of  blow-up. Hence, the nerve of $R_P$ is the pyramid over $Q$ with vertex corresponding to the polyhedron $P'$. 
\end{proof}

\subsection{Projective toric varieties and lattice isomorphisms.}\label{subsec:lattice-isomorphisms}
In this subsection, we prove a result regarding embeddings of projective toric varieties and lattice embeddings.

\begin{lemma}\label{lem:toric-translation}
Let $P$ be a full-dimensional lattice polyhedron in $M_{\qq}$. Let $X_P$ be the associated projective variety.
Let $k$ be a vector in $M_{\qq}$. Let $A$ be the ample line bundle on $X_P$ associated to $P$ and $A'$ be the ample line bundle on $X_{P}$ associated to $P+k$. Then, we have that $A \sim_{\qq} A'$. Furthermore, if $k$ is a lattice vector, then $A \sim A'$.
\end{lemma}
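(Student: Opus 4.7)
The plan is to use the standard dictionary between full-dimensional lattice polyhedra and torus-invariant ample divisors on projective toric varieties. The key fact is that the normal fan of $P$ is invariant under translation, so $X_P = X_{P+k}$ as toric varieties, and the difference of the two associated divisors turns out to be principal (resp.\ $\qq$-principal).

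First, I would recall the construction: writing $\Sigma_P$ for the normal fan of $P$, its rays $\rho$ have primitive generators $u_\rho \in N$, and the toric variety $X_P$ comes equipped with torus-invariant prime divisors $D_\rho$. The ample divisor associated to $P$ can be described as
\[ A_P \;=\; -\sum_\rho a_\rho(P)\, D_\rho, \qquad a_\rho(P) := \min_{m\in P}\langle m, u_\rho\rangle. \]
Since $P$ and $P+k$ differ by a translation in $M_\qq$, they have identical normal fans, hence give the same toric variety, and the support numbers transform as $a_\rho(P+k) = a_\rho(P) + \langle k, u_\rho\rangle$.

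Second, I would compute the divisor-theoretic difference:
\[ A' - A \;=\; A_{P+k} - A_P \;=\; -\sum_\rho \langle k, u_\rho\rangle\, D_\rho. \]
When $k \in M$, the right-hand side is exactly $-\divv(\chi^k)$, the principal divisor of the character $\chi^k \in \cc(X_P)^\times$; hence $A \sim A'$. When $k \in M_\qq$, choose $n\in \zz_{>0}$ with $nk \in M$; then $n(A' - A) = -\divv(\chi^{nk})$ is principal, yielding $A \sim_\qq A'$.

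There is no real obstacle here beyond bookkeeping sign conventions in the formula for $A_P$ and the principal divisor formula $\divv(\chi^m) = \sum_\rho \langle m, u_\rho\rangle D_\rho$; the entire statement is a direct manifestation of the fact that translating the defining polyhedron twists the chosen linearization of the same ample line bundle by a character of the torus.
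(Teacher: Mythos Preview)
Your proposal is correct and follows essentially the same approach as the paper: both observe that translation preserves the normal fan, compute the support numbers of $P+k$ in terms of those of $P$, and identify the difference $A'-A$ with (a rational multiple of) the principal divisor $\divv(\chi^{k})$. The only cosmetic differences are sign conventions and that the paper computes $A-A'$ rather than $A'-A$.
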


\begin{proof}
Let $t$ be such that $tk  \in M_{\zz}$.
The lattice polyhedron $P$ is given by the inequalities:
\[
\{ m \mid \langle m,u_F \rangle \geq -a_F\}.
\]
Here, $F$ are the facets of $P$ and $u_F\in N$ is the inward pointing normal vector of the face $F$.
For each face $F$ of $P$, we have an associated prime torus invariant divisor $D_F$ on $X_P$~\cite[Definition 2.3.14]{CLS11}.
By~\cite[Proposition 4.2.10]{CLS11}, we have that 
\[
A=\sum_{F}
a_F D_F,
\]
where the sum runs over all the faces $F$ of $P$.
Observe that 

\begin{align*}
P' &=\{ m+k \mid \langle m,u_F \rangle \geq -a_F\}=\{ m \mid \langle m-k,u_F \rangle  \geq -a_F\} \\&=
\{ m \mid \langle m,u_F \rangle \geq -a_F+\langle k, u_F\rangle  \}. 
\end{align*}

Hence, $P$ and $P'$ have the same associated normal fan.
In particular, they have the same associated projective toric variety $X_P$.
By~\cite[Proposition 4.2.10]{CLS11}, we have that
\[
A'=\sum (a_F- \langle k,u_F \rangle )D_F.
\]
We conclude that 
\[
A-A'= \sum \langle k,u_F \rangle D_F= \sum \frac{1}{t} \langle tk, u_F \rangle D_F = \frac{1}{t} div(\chi ^{k}).
\]
The last equality follows from~\cite[Proposition 4.1.2]{CLS11}.
We conclude that $A\sim_\qq A$ holds.
If $k$ is a lattice vector, then $t=1$ and $A-A'$ is principal on $X_P$.
\end{proof}

\begin{lemma}\label{lem:toric-linear-lattice}
Let $P$ and $P'$ be two full-dimensional lattice polyhedra in $M_{\qq}$ and $M_{\qq}'$, respectively. Let $X_P$ (resp. $X_{P'}$) be the associated projective toric variety with ample line bundle $A_P$ (resp. $A_{P'}$). Let $H: M_\qq \rightarrow M'_\qq$ be a linear lattice isomorphism
so that $H(P)=P'$. Then, we have an associated toric isomorphism $\phi:X_{P'}\rightarrow X_{P}$, for which $\phi^{*}(A_P)=A_{P'}$.
\end{lemma}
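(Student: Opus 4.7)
My plan is to obtain the toric isomorphism $\phi$ from the dual of $H$ at the level of fans, and then compare the two divisors using the same formula from \cite[Proposition 4.2.10]{CLS11} that was used in the proof of Lemma~\ref{lem:toric-translation}.

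First, I would set up the dual picture. Since $H\colon M_\qq \to M'_\qq$ is a linear isomorphism restricting to a lattice isomorphism $M \to M'$, its transpose $H^T\colon N' \to N$ is also a lattice isomorphism. Write $P$ in its facet presentation
\[
P=\{\, m\in M_\qq \mid \langle m,u_F\rangle \geq -a_F \text{ for every facet } F\,\},
\]
where $u_F\in N$ is the primitive inward normal. A direct substitution $m'=H(m)$ shows that
\[
P'=H(P)=\{\, m'\in M'_\qq \mid \langle m', (H^T)^{-1}(u_F)\rangle \geq -a_F\,\},
\]
so the facets of $P'$ are exactly the $F'=H(F)$, with primitive inward normals $u_{F'}=(H^T)^{-1}(u_F)$ (these are primitive because $H^T$ is a lattice isomorphism) and with the same constants $a_{F'}=a_F$.

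Next, the normal fans $\Sigma_P\subset N_\qq$ and $\Sigma_{P'}\subset N'_\qq$ are carried into one another by $H^T$: the ray $\rho_{F'}=\qq_{\geq 0}\cdot u_{F'}$ of $\Sigma_{P'}$ is sent to $\qq_{\geq 0}\cdot H^T(u_{F'})=\qq_{\geq 0}\cdot u_F=\rho_F$, and the bijection extends to cones of all dimensions because the face lattices of $P$ and $P'$ match under $H$. Since $H^T$ is a lattice isomorphism compatible with the two fans, the standard toric dictionary produces an isomorphism of toric varieties
\[
\phi\colon X_{P'}\longrightarrow X_P,
\]
which under the bijection of rays satisfies $\phi^*D_F=D_{F'}$ for each facet $F$ of $P$.

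Finally, by~\cite[Proposition 4.2.10]{CLS11} we have $A_P=\sum_F a_F D_F$ and $A_{P'}=\sum_{F'} a_{F'}D_{F'}$. Pulling back and using $a_F=a_{F'}$ gives
\[
\phi^*(A_P)=\sum_F a_F\,\phi^*(D_F)=\sum_F a_F D_{F'}=\sum_{F'} a_{F'}D_{F'}=A_{P'}.
\]
The only genuine care required is bookkeeping: verifying that under a \emph{linear} (not merely affine) lattice isomorphism the primitive normal vectors transform by $(H^T)^{-1}$ and the constants $a_F$ remain unchanged. This is where a translation would have introduced a shift, which is precisely the situation handled separately in Lemma~\ref{lem:toric-translation}; in the present linear case no such correction occurs and equality on the nose (not just $\qq$-linear equivalence) is obtained.
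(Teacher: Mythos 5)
Your proposal is correct and follows essentially the same route as the paper: read off the facet presentation of $P'=H(P)$ to see that the normals transform by the inverse transpose while the constants $a_F$ are unchanged, obtain the toric isomorphism from the induced lattice isomorphism of normal fans, and compare $A_P$ and $A_{P'}$ via \cite[Proposition 4.2.10]{CLS11}. The only cosmetic difference is that the paper verifies $\phi^*(A_P)=A_{P'}$ by computing support functions via \cite[Proposition 6.2.7]{CLS11}, whereas you invoke directly the (equivalent) fact that $\phi^*D_F=D_{F'}$ for the torus-invariant divisors matched by the fan isomorphism.
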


\begin{proof}
The lattice polyhedron $P$ is given by inequalities:
\[
\{ m \mid \langle m,u_F \rangle \geq -a_F\}.
\]
Then, the lattice polyhedron $P'$ is given by inequalities:
\[
\{ H(m) \mid \langle m,u_{F} \rangle \geq -a_F\}=\{ m \mid \langle m,H^{t}(u_{F}) \rangle \geq -a_F\}.
\]
Here, $F'$ are the facets of $P'$ and $u_{F'}=H^{t}(u_{F})\in N'$ is the inward pointing normal vector of the face $F'$.
By~\cite[Proposition 4.2.10]{CLS11}, we have that
\[
A_P=\sum a_F D_F 
\text{ and }
A_{P'}=\sum a_F D_{F'}.
\] 
The lattice isomorphism $H: M_\qq \rightarrow M_{\qq}'$ induces a lattice isomorphism 
$H^{t} : N_{\qq}' \rightarrow N_\qq$ on the dual lattices.
By~\cite[Theorem 3.3.4]{CLS11}, we have an associated equivariant isomorphism of projective toric varieties
$\phi\colon X_{P'}\rightarrow X_P$.
Using~\cite[Proposition 6.2.7]{CLS11}, we can compare 
\begin{align*}
\phi^*(A_P) & =
\phi^*\left( \sum_F a_FD_F\right) 
= 
\sum_{F'} \phi_{A_{P'}}(H^t(u_F)) D_{F'}\\
&=
\sum_{F'} \phi_{A_{P'}}(u_{F'})D_{F'}
 =
\sum_{F'} a_{F'} D_{F'}
=
A_{P'}.
\end{align*}
Here, $\phi_D$ is the support function associated to the Cartier divisor $D$ (see, e.g.~\cite[Theorem 4.2.12]{CLS11}).
This finishes the proof of the lemma.
\end{proof}

\begin{lemma}\label{lem:toric-codim-one-embedding}
Let $P$ be a full-dimensional lattice polyhedron in $M_{\qq}$. Let $X_P$ be the associated projective toric variety and $A_P$ be the associated ample line bundle. Let $F$ be a facet of $P$ and $D_F$ be the corresponding prime torus invariant divisor. Let $M_{F,\qq}$ be the smallest linear subspace of $M_{\qq}$ that contains $F$ and set $M_F:= M \cap M_{F,\qq}$. Let $A_D$ be the ample line bundle in $D$ associated to the lattice polytope $F$ in $M_{F,\qq}$. Then, we have $A_P|_{D} \sim A_D$.
\end{lemma}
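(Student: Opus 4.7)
The plan is to translate $P$ to a convenient position, identify $D_F$ with $X_F$ via the orbit--cone correspondence, and then read off the restriction of $A_P$ from the facet formula of~\cite[Proposition 4.2.10]{CLS11}.

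First, I would apply Lemma~\ref{lem:toric-translation} to translate $P$ by a lattice vector without changing $A_P$ up to linear equivalence or the intrinsic polytope $F$. Choosing a vertex $v$ of $F$ and translating by $-v$ places $F$ inside the linear span $M_{F,\qq}$ through the origin, and in the facet presentation $P=\{m\mid\langle m,u_G\rangle\ge -a_G\}$ the coefficient $a_F$ becomes $0$, so
\[
A_P=\sum_{G\neq F} a_G\, D_G.
\]

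Next, by the orbit--cone correspondence~\cite[\S 3.2]{CLS11}, $D_F=V(\rho_F)$ is the projective toric variety of the star of $\rho_F$ in the normal fan of $P$, and this star is canonically the normal fan of $F$ computed in $M_{F,\qq}$. This gives a toric isomorphism $D_F\cong X_F$ under which the facets $G'$ of $F$ correspond bijectively to the facets $G\neq F$ of $P$ with $G\cap F$ a facet of $F$, and the torus-invariant prime divisor $D_{G'}\subset X_F$ is identified with $D_G|_{D_F}$. For any other $G\neq F$, the divisors $D_G$ and $D_F$ do not meet in codimension one in $D_F$, so $D_G|_{D_F}=0$. Restricting the displayed formula then yields
\[
A_P|_{D_F}=\sum_{G'\text{ facet of }F} a_G\, D_{G'}=A_D,
\]
where the last equality uses~\cite[Proposition 4.2.10]{CLS11} together with the fact that the inward normal of $G'=G\cap F$ inside $M_{F,\qq}$ is the image $\pi(u_G)$ of $u_G$ under the quotient projection $\pi\colon N\to N_F$, so that the corresponding defining inequality of $F$ is $\langle m,\pi(u_G)\rangle\ge -a_G$.

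The main obstacle is verifying the identification $D_G|_{D_F}=D_{G'}$ at the level of Cartier divisors with multiplicity one, not merely as sets. I plan to handle this either by invoking the support-function restriction formula~\cite[Proposition 6.2.7]{CLS11}, showing that $\phi_{A_P}$ descends to the star fan to give exactly the support function of $A_D$, or by checking transversality of $D_G\cap D_F$ directly along the generic point of the orbit closure of the smooth $2$-dimensional cone $\mathrm{cone}(\rho_F,\rho_G)$.
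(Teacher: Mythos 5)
Your route is viable and genuinely different in presentation from the paper's. The paper never passes through the facet formula $A_P=\sum_G a_G D_G$ at all: it works with the vertex-indexed Cartier data $\{(U_\sigma,\chi^{-m_\sigma})\}$ of \cite[Theorem 4.2.8]{CLS11}, observes that the local data of $A_D$ on $X_F$ is given by the very same vertices $m_\sigma$ (those lying on $F$), and concludes by matching transition functions that the total space of the line bundle on $X_F$ is the restriction of the one on $X_P$. That argument sidesteps entirely the question you correctly identify as the main obstacle, namely whether $D_G|_{D_F}=D_{G'}$ with multiplicity one. Your approach instead buys a more geometric, divisor-by-divisor picture via the orbit--cone correspondence, at the cost of having to justify that identification.

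On that obstacle: be aware that your second proposed remedy does not apply, because $P$ is only assumed to be a lattice polyhedron, not a smooth one, so the two-dimensional cone $\mathrm{cone}(\rho_F,\rho_G)$ need not be smooth. When it is singular, $\pi(u_G)$ need not be primitive in $N_F$, so the displayed identity $\langle m,\pi(u_G)\rangle\ge -a_G$ is not the primitive defining inequality of $G'$ in $F$ (the coefficient of $D_{G'}$ in $A_D$ is $a_G$ divided by the index of $\pi(u_G)$), and correspondingly $D_G|_{D_F}$ picks up the reciprocal factor; the two discrepancies cancel, but neither intermediate claim is literally true as stated. Your first remedy --- restricting the support function $\phi_{A_P}$, i.e.\ working with the Cartier data $m_\sigma$ on the star fan --- is the correct fix, and once you carry it out you are essentially reproducing the paper's proof. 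So either commit to the Cartier-data computation throughout (as the paper does), or keep your divisor-theoretic framing but phrase the restriction step via support functions rather than via primitivity of $\pi(u_G)$.
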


\begin{proof}
Let $\Sigma_P$ be the dual fan of $P$. 
For each cone $\sigma \in \Sigma_P$, we have an associated affine toric variety $U_\sigma$ which is an open chart of $X_P$. $D_P$ is principal on each chart of this cover.
Let the local data of $D_P$ be $\{(U_{\sigma},\chi^{-m_{\sigma}}) \}_{\sigma \in \Sigma_P}$. By~\cite[Theorem 4.2.8]{CLS11}, we can take $m_\sigma$ to be the vertex of the polytope that corresponds to $\sigma$. Then, $\mathcal{O}_{X_P}(D_P)$ is the sheaf of sections of a rank 1 vector bundle $V_P \rightarrow X_{P}$ with transition functions $g_{\sigma \tau}=\chi ^{m_{\tau}-m{\sigma}}$.

Let the local data of $D_F$ be $\{(U_{\sigma '},\chi^{-m_{\sigma '}}) \}_{\sigma' \in \Sigma_F}$, where each $\sigma' \in \Sigma_F$ is the projection to $N_{F,\qq}$ of a $\sigma \in \Sigma_P$ that corresponds to a vertex in $F$. Again, by~\cite[Theorem 4.2.8]{CLS11}, we can take $m_{\sigma '}$ to be the vertex in $F$ that corresponds to $\sigma '$, hence also the vertex in $P$ that corresponds to $\sigma$. 
Thus, $\mathcal{O}_{X_F}(D_F)$ is the sheaf of sections of a rank 1 vector bundle $V_F \rightarrow X_{F}$ with transition functions 
$g_{\sigma ' \tau '}=\chi ^{m_{\tau '}-m{\sigma '}}$, where $m_{\tau '}$ and $m{_\sigma '}$ correspond to vertices of $F$.

Therefore, we have that $V_F$ is the restriction of $V_P$ to $F$, i.e. we have a commutative diagram:
\[
\begin{tikzcd}
V_F  \arrow{d} \arrow[r, hook] & V_P \arrow{d}{} \\
X_F \arrow[r, hook, "i"] &  X_P.
\end{tikzcd}
\]
We conclude that  $i^{*}\mathcal{O}_{X_P}(D_P)=\mathcal{O}_{X_F}(D_F)$.
\end{proof}

\begin{proposition}\label{prop:toric-linear}
Let $P$ be a full-dimensional lattice polyhedron in $M_\qq$.
Let $X_P$ be the associated projective toric variety with induced polarization $A_P$.
Let $F$ be a full-dimensional lattice polyhedron in $M'_\qq$.
Let $X_F$ be the associated projective toric variety with induced polarization $A_F$. 
Let $f\colon M'_\qq \rightarrow M_\qq$ be a lattice embedding for which $f(F)$ is a face of $P$. 
Then, we have an associated toric embedding $\phi\colon X_F\rightarrow X_P$ for which $\phi^*A_P \sim A_F$.
\end{proposition}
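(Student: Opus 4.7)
The plan is to combine the three preceding lemmas: the translation invariance (Lemma~\ref{lem:toric-translation}), the compatibility under linear lattice isomorphisms (Lemma~\ref{lem:toric-linear-lattice}), and the codimension-one face embedding (Lemma~\ref{lem:toric-codim-one-embedding}). The strategy is to decompose the affine lattice embedding $f$ into a translation and a linear part, handle each separately, and then iterate the facet embedding to drop from codimension zero down to the face $f(F)$.

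First, write $f(x)=L(x)+c$ with $L\colon M'_\qq\to M_\qq$ a $\qq$-linear map and $c:=f(0)$. Because $f$ is a lattice embedding, $c$ lies in $M$ and $L$ induces a lattice isomorphism from $M'$ onto $L(M'_\qq)\cap M$. Replacing $P$ by $P-c$ does not change the associated projective toric variety, and by Lemma~\ref{lem:toric-translation} the associated polarization satisfies $A_P\sim A_{P-c}$, since $c$ is a lattice vector. Hence we may assume $c=0$, so $f=L$ is linear, and $L(F)$ is a face of $P$.

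Next, denote by $M_{L,\qq}\subset M_\qq$ the $\qq$-linear span of the image of $L$ and set $M_L:=M\cap M_{L,\qq}=L(M')$, where the equality holds precisely because $f$ is a lattice embedding. Then $L(F)$ is a full-dimensional lattice polyhedron in $M_{L,\qq}$ with respect to the lattice $M_L$. Applying Lemma~\ref{lem:toric-linear-lattice} to the lattice isomorphism $L\colon M'_\qq\to M_{L,\qq}$, we obtain a toric isomorphism $\psi\colon X_{L(F)}\to X_F$ with $\psi^\ast A_F=A_{L(F)}$.

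It remains to produce a toric embedding $X_{L(F)}\hookrightarrow X_P$ with the correct restriction of polarizations. Since $L(F)$ is a face of $P$, we can choose a flag of faces
\[
L(F)=G_s\subset G_{s-1}\subset \cdots \subset G_1\subset G_0=P,
\]
with $s=\dim M_\qq-\dim M_{L,\qq}$ and each $G_i$ a facet of $G_{i-1}$. At each step, the polytope $G_i$ carries the induced lattice $M\cap M_{G_i,\qq}$, so Lemma~\ref{lem:toric-codim-one-embedding} applies and yields a closed toric embedding $X_{G_i}\hookrightarrow X_{G_{i-1}}$ with $A_{G_{i-1}}|_{X_{G_i}}\sim A_{G_i}$. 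Composing the resulting chain $X_{L(F)}=X_{G_s}\hookrightarrow \cdots\hookrightarrow X_{G_0}=X_P$ with $\psi^{-1}$ produces the desired embedding $\phi\colon X_F\to X_P$, and composing the linear equivalences at each stage gives $\phi^\ast A_P\sim A_F$.

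The main subtlety, and the step most worth stating carefully, is the propagation of the lattice structure down the flag: for Lemma~\ref{lem:toric-codim-one-embedding} to apply at step $i$, the lattice on the affine span of $G_i$ must coincide with the one induced by $M$. This is exactly what the hypothesis that $f$ is a \emph{lattice} embedding guarantees, via the identity $M_L=M\cap M_{L,\qq}$ established above; the analogous identity holds at every intermediate face, so the iteration proceeds without any mismatch between intrinsic and induced lattices.
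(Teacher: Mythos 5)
Your proposal is correct and follows exactly the paper's argument: decompose $f$ into a translation, a linear lattice isomorphism onto its image, and a chain of codimension-one facet inclusions along a flag of faces, then apply Lemma~\ref{lem:toric-translation}, Lemma~\ref{lem:toric-linear-lattice}, and Lemma~\ref{lem:toric-codim-one-embedding} in turn. The paper's own proof is a two-sentence version of this; your write-up merely makes explicit the lattice bookkeeping (in particular $L(M')=M\cap M_{L,\qq}$) that the paper leaves implicit.
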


\begin{proof}
The lattice embedding $f$ can be decomposed as a translation on $M'_\qq$, a linear lattice isomorphism, 
and a sequence of linear lattice embeddings of codimension one. 
Then, the proposition follows from Lemma~\ref{lem:toric-translation}, Lemma~\ref{lem:toric-linear-lattice}, and Lemma~\ref{lem:toric-codim-one-embedding}.
\end{proof}

\subsection{From polyhedral complexes to snc toric varieties}
\label{subsec:from-poly-to-toric}

In this subsection, we construct projective snc toric Calabi-Yau varieties from smooth polyhedral complexes.

\begin{proposition}\label{prop:toric-variety-from-pol-complex}
    Let $\mathcal{P}$ be a smooth polyhedral complex of dimension $n$. Then, there exists an $n$-dimensional simple normal crossing projective toric Calabi-Yau variety $T$, with $\pi_1(T)\cong \pi_1(|\mathcal{P}|)$
\end{proposition}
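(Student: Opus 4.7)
The plan is to assemble $T$ by gluing the toric varieties attached to the maximal polyhedra along their common faces, and then verify the four required properties (snc, projective, Calabi--Yau, correct fundamental group) using Proposition~\ref{prop:toric-linear} and the combinatorial smoothness hypothesis.

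\textbf{Construction.} For each maximal polyhedron $P\in\mathcal{P}$, associate the smooth projective toric variety $X_P$ with its polarization $A_P$; smoothness of $X_P$ is equivalent to smoothness of the lattice polytope $P$. For any face inclusion $Q\hookrightarrow P$ in $\mathcal{P}$, Proposition~\ref{prop:toric-linear} provides a closed toric embedding $X_Q\hookrightarrow X_P$ together with the compatibility $A_P|_{X_Q}\sim A_Q$. Define $T$ as the colimit of the diagram $\{X_P\}_{P\in\mathcal{P}}$ along these morphisms; concretely, $T=\bigsqcup_{P\text{ max}}X_P/\!\sim$ where two points are identified when they correspond to the same point of $X_Q$ for a common face $Q$ of two maximal polyhedra. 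The polarizations $A_P$ glue (again by Proposition~\ref{prop:toric-linear}) to an ample line bundle on $T$, so $T$ is projective.

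\textbf{snc structure and Calabi--Yau property.} The combinatorial smoothness hypothesis controls the local geometry: at any vertex $v\in\mathcal{P}$ the nerve is an $n$-simplex, so exactly $n+1$ maximal polyhedra contain $v$, and the lattice smoothness at $v$ in each of them provides compatible local coordinates. In $T$ this gives $n+1$ smooth components meeting transversally at the corresponding torus-fixed point, i.e.\ the snc condition holds. For the Calabi--Yau property, note that every facet $F$ of a maximal polyhedron $P$ has nerve a $1$-simplex, so $F$ is shared with exactly one other maximal polyhedron; hence the double locus $D_P$ of $T$ inside $X_P$ equals the reduced sum $\sum_{F\text{ facet of }P}D_F$. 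Since $K_{X_P}=-\sum_F D_F$ for a smooth toric variety, this gives $(K_{X_P}+D_P)\sim 0$ on each component, and the standard invariant logarithmic top form $\frac{dt_1}{t_1}\wedge\cdots\wedge\frac{dt_n}{t_n}$ on each $X_P$ furnishes a local generator of $\omega_T$. These forms agree on overlaps up to a consistent sign convention because the lattice embeddings in the definition of $\mathcal{P}$ preserve the natural top form on each torus, so they glue to a nowhere vanishing global section of $\omega_T$.

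\textbf{Fundamental group.} The closed cover of $T$ by its components $\{X_P\}_{P\text{ max}}$ is a cover by simply connected sets (every smooth projective toric variety is simply connected), and any non-empty multiple intersection $X_{P_{i_0}}\cap\cdots\cap X_{P_{i_k}}$ equals $X_Q$ for the common face $Q=P_{i_0}\cap\cdots\cap P_{i_k}$, which is again a simply connected toric variety. A van Kampen / nerve-theorem argument for this (thickened to an open) cover then gives
\[
\pi_1(T)\cong \pi_1(\Delta(T)),
\]
where $\Delta(T)$ is the dual complex of $T$. Combinatorial smoothness identifies $\Delta(T)$ with the nerve of the covering of $|\mathcal{P}|$ by the closed stars of its maximal polyhedra: the $k$-simplices of $\Delta(T)$ are $(k+1)$-tuples of maximal polyhedra sharing a common face, which corresponds bijectively to $(n-k)$-dimensional faces of $\mathcal{P}$. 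Since stars of maximal polyhedra and their intersections in $|\mathcal{P}|$ are contractible, the nerve theorem yields $\pi_1(|\mathcal{P}|)\cong\pi_1(\Delta(T))$. Combining these isomorphisms gives $\pi_1(T)\cong\pi_1(|\mathcal{P}|)$.

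\textbf{Main obstacle.} The verification of the snc and Calabi--Yau conditions is essentially combinatorial and follows from the smoothness hypotheses; the delicate point is the global gluing: one must check that the toric embeddings $X_Q\hookrightarrow X_P$ assemble into an honest colimit in the category of varieties (rather than merely on the level of topological spaces), and that the top form sign conventions can be made consistent over the whole complex so that $\omega_T$ is globally trivial and not merely trivial on each component. This is where the rigidity of the category of polyhedral complexes (morphisms are lattice embeddings, not merely combinatorial inclusions) is essential.
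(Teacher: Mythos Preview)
Your construction of $T$, and your treatment of projectivity, the snc property, and the Calabi--Yau condition, follow essentially the same route as the paper (though the paper is more careful about how the line bundles glue, using push-outs rather than just invoking Proposition~\ref{prop:toric-linear}). One remark: the paper defines ``Calabi--Yau'' for an snc variety as $K_T|_{T_i}\sim 0$ on each component, which follows immediately from $K_{X_P}+D_P\sim 0$. Your further claim that the torus-invariant top forms glue to a global trivialization of $\omega_T$ is not needed, and would require an orientation-type compatibility on $|\mathcal{P}|$ that you have not established.

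The genuine gap is in your fundamental-group argument. You assert that any non-empty intersection $X_{P_{i_0}}\cap\cdots\cap X_{P_{i_k}}$ equals $X_Q$ for \emph{the} common face $Q=P_{i_0}\cap\cdots\cap P_{i_k}$, and then invoke the nerve theorem. But nothing in the definition of a smooth polyhedral complex forces two maximal polyhedra to meet in a single face: the smoothness hypothesis only says that the nerve at each face is a simplex, not that a given tuple of maximal polyhedra arises as the nerve of at most one face. If $P_1$ and $P_2$ share two disjoint facets (which the axioms do not exclude), then $X_{P_1}\cap X_{P_2}$ is disconnected, the nerve of the closed cover is not the dual complex of $\mathcal{P}$, and your identification $\pi_1(T)\cong\pi_1(\Delta(T))\cong\pi_1(|\mathcal{P}|)$ breaks down. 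More generally, the nerve of the cover is a simplicial complex, whereas the correct combinatorial model for $|\mathcal{P}|$ is the dual $\Delta$-complex with one $k$-cell for each $(n-k)$-face; these agree only under the unstated hypothesis that distinct tuples of maximal polyhedra have distinct common faces.

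The paper sidesteps this entirely: it uses the glued moment map $\phi_T\colon T\to|\mathcal{P}|$ and argues skeleton by skeleton. On the $1$-skeleton both sides are the same graph of $\pp^1$'s versus intervals, and attaching a $2$-cell $F$ to $|\mathcal{P}|_1$ kills exactly the loop around $\partial F$, while attaching the toric surface $T_F$ kills exactly the loop around its toric boundary; higher cells do not affect $\pi_1$. This argument never needs to know that intersections of components are connected, which is why it is more robust than the nerve approach.
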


\begin{proof}
Each polyhedra $F$ in $\mathcal{P}$ is contained in an affine space $M_{F,\qq} \simeq \qq^{N_F}$ so that 
the vertices of $F$ are contained in $\zz^{N_F}$.
By replacing $F$ with its affine span, we may assume $N_F=\dim F$, i.e., $F$ is a full-dimensional polyhedron in $M_{F,\qq}$.
Hence, for each polyhedron $P\in \mathcal{P}$, 
we have an associated toric projective variety $T_P$
and an ample Cartier divisor $A_P$.
Assume that $F$ is a face of $P$, then we have an induced 
lattice embedding $i_{F,P}\colon M_{F,\qq}\rightarrow M_{P,\qq}$.
By Proposition~\ref{prop:toric-linear},
this lattice embedding induces a toric embedding 
$i_{F,P}\colon T_F\hookrightarrow T_P$
for which 
\begin{equation}\label{eq:ample-line} 
i_{F,P}^* A_P \sim A_F.
\end{equation} 
Furthermore, if $G$ is a face of $F$ and $F$ is a face of $P$, then we have the following equality:
\begin{equation}\label{eq:comp}
i_{F,P}\circ i_{G,F} = 
i_{G,P}.
\end{equation} 
Let $P_1,\dots,P_k$ be the $n$-dimensional faces of $\mathcal{P}$.
We denote by $T_1,\dots,T_k$ the associated projective $n$-dimensional toric varieties.
Note that all the polyhedra $P_i$ are smooth,
so each $T_i$ is a smooth projective toric variety.
Then, we can glue the varieties
$T_i$ whenever the polyhedra $P_i$ have a common facet. 
To obtain a normal crossing scheme, we define the gluing by taking affine covers and glue affine locally by taking the fiber product of rings. 
This gluing is well-defined due to the compatibility condition~\eqref{eq:comp}.
We obtain a scheme $T$. 
Since $\mathcal{P}$ is a smooth polyhedral complex and the gluing of irreducible components is normal crossing, the scheme $T$ has snc singularities.
Note that each irreducible component $T_i$ comes with a line bundle $L_i\rightarrow T_i$.
Let $T_{i,j}:=T_i\cap T_j$ and $L_{i,j}\rightarrow T_{i,j}$ be the induced line bundle.
Due to~\eqref{eq:ample-line}, we have closed embeddings
$L_{i,j}\hookrightarrow L_i$ and $L_{i,j}\hookrightarrow L_j$ for each $i$ and $j$.
The push-out of closed embeddings exists in the category of schemes~\cite[Corollary 3.9]{Sch05}.
Hence, we obtain a scheme $L_T$ by gluing the $L_i$'s.
By the universal property of push-outs,
$L_T$ admits a morphism $L_T\rightarrow T$
that restricts to $L_i\rightarrow T_i$ on each component $T_i$.
In particular, $L_T$ is a line bundle over the snc variety $T$.
By the Nakai-Mosheizon criterion, this line bundle is ample.
Hence, $T$ is a projective snc variety.

We claim that $T$ is a Calabi-Yau variety, i.e., for each component $T_i$ of $T$,
we have that $K_T|_{T_i}\sim 0$.
Indeed, since $\mathcal{P}$ is simple, every face of dimension $n-1$ in $\mathcal{P}$ is contained in exactly two maximal polyhedra.
Hence, by adjunction formula, for each $T_i$, we have that 
\[
K_T|_{T_i} \sim K_{T_i}+B_{T_i} \sim 0.
\] 
Here, $B_{T_i}$ is the reduced toric boundary of $T_i$.
This proves that $T$ is a Calabi-Yau variety.

The moment maps $\phi_P \colon T_P\rightarrow P$ glue together to a map
$\phi_T\colon T\rightarrow |\mathcal{P}|$.
We claim that $\phi_T$ induces an isomorphism
\begin{equation}\label{eq:isom-fundamental}
{\phi_T}_* \colon \pi_1(T) \rightarrow \pi_1(|\mathcal{P}|).
\end{equation} 
Indeed, the restriction
of $\phi_T$ to 
\[
\bigcup_{\dim F=1} T_F \rightarrow 
|\mathcal{P}|_1
\] 
induces an isomorphism between fundamental groups.
Here, $|\mathcal{P}|_1$ is the $1$-skeleton of $|\mathcal{P}|$.
Indeed, both spaces have the same graph structure, when considering each $\pp^1$ in $\bigcup_{\dim F=1} T_F$ 
as a topological $S^2$.
Moreover, we have a commutative diagram
\[ \begin{tikzcd}
\pi_1\left( 
\bigcup_{\dim F =1} T_F 
\right) \ar[r]{} \arrow{r}{{\phi_T}_*} \ar[d] \arrow{d}{} &
\pi_1(|\mathcal{P}|_1)\ar[d]
\arrow{d}{} \\
\pi_1\left(\bigcup_{\dim F\leq 2} T_F\right)\ar[r] \arrow{r}{{\phi_T}_*} & 
\pi_1(|\mathcal{P}|_2).
\end{tikzcd}
\]
The kernel of the right vertical map
is the smallest normal subgroup 
of $\pi_1(|\mathcal{P}|_1)$ containing all the loops around $2$-skeleta. 
On the other hand, the kernel of the left vertical arrow is the smallest normal subgroup generated by the 
torus invariant boundary
of the toric surfaces $T_F$ 
with $\dim F=2$.
Indeed, the fundamental group
of a projective toric surface is trivial~\cite[Theorem 10.4.3]{CLS11}.
We conclude that 
the restriction of $\phi_T$ to 
\[
\bigcup_{\dim F \leq 2} T_F \rightarrow
|\mathcal{P}|_2
\] 
induces an isomorphism between fundamental groups.
Observe that gluing cells of dimension at least $3$ does not change the fundamental group of a CW-complex.
Analogously, gluing a toric manifold of dimension at least $3$ along the torus invariant boundary does not change
the fundamental group of an snc variety.
Hence, 
the isomorphism~\ref{eq:isom-fundamental} holds. We deduce that $T$ is an snc CY variety with fundamental group $\pi_1(\mathcal{P})$.
\end{proof}

\subsection{From snc toric varieties to lc singularities}
\label{subsec:from-toric-to-lc}

In this subsection, we construct log canonical singularities of dimension $n+1$ from polyhedral complexes of dimension $n$.

In order to construct log canonical singularities, we will use a result due to Koll\'ar~\cite{Kol12}, (see also~\cite[Theorem 35]{KK14}).
This proposition allows us to construct singularities admitting a prescribed partial resolution.
The following theorem is proved in~\cite[Theorem 8, Propositions 9 and 10]{Kol12}.

\begin{proposition}
\label{prop:prescribed-exceptional}
Let $T$ be an $n$-dimensional projective variety with simple normal crossing singularities and $n\geq 2$. Let $L$ be an ample line bundle on $T$. Then, for $m \gg 1$ there is a germ of a normal singularity
$(X;x)$ with a partial resolution:
\[ 
\xymatrix@R=4em@C=4em{
T  \ar[d] \ar@{^{(}->}[r] & Y \ar[d]^-{\phi} \\
x \ar@{^{(}->}[r] &  (X;x)
}
\]
satisfying the following conditions:
\begin{enumerate}
    \item $T$ is a Cartier divisor on $Y$,
    \item the normal bundle of $T$ in $Y$ is $K_T \otimes L^{-m}$,
    \item we have an isomorphism $\pi_1(Y)\simeq \pi_1(T)$,
    \item the kernel of $\pi_1^{\rm loc}(X;x) \twoheadrightarrow \pi_1(Y)$ is cyclic, central, and generated by any loop around an irreducible component of $T$,
    \item if $\dim T \leq 4 $, then $(x \in X )$ is an isolated singular point, and
    \item if $K_T \sim 0$, then $K_X$ is Cartier and $(x\in X)$ is lc.
\end{enumerate}
\end{proposition}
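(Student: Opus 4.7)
The plan is to follow Koll\'ar's strategy from \cite{Kol12}: realize $(X;x)$ as the contraction of a distinguished divisor $T$ sitting as a Cartier divisor inside an auxiliary variety $Y$, with the normal bundle of $T$ in $Y$ prescribed to be $K_T \otimes L^{-m}$. Once this setup is in place, properties (1)--(6) can be verified by a combination of adjunction, the long exact sequence of a fibration, and a careful analysis of the singular locus of $Y$.

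First I would choose $m \gg 0$ large enough that $N^{-1} := K_T^{-1} \otimes L^m$ is ample on $T$; this is possible because $L$ is ample, and it is the ampleness of $N^{-1}$ that will permit the contraction. I would then construct $Y$ as an analytic neighborhood of the zero section inside the total space $\operatorname{Tot}(N)$, so that $T$ sits as a Cartier divisor with normal bundle $N$ by construction. The ampleness of $N^{-1}$ allows one to contract the zero section $T \subset Y$ to a single normal point $x \in X$ via Grauert's contraction theorem, or its algebraic counterpart due to Artin. Properties (1) and (2) then hold by construction.

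Property (3) is immediate because $Y$ deformation-retracts onto the zero section $T$. Property (4) follows from the $\cc^*$-fibration $Y \setminus T \to T$ together with the induced long exact sequence of homotopy groups: it gives a surjection $\pi_1(Y \setminus T) \twoheadrightarrow \pi_1(T)$ whose kernel is the image of $\pi_1(\cc^*) \simeq \zz$, which is cyclic, central, and generated by the $S^1$-fiber, i.e., by a loop around an irreducible component of $T$; identifying $\pi_1(Y \setminus T)$ with $\pi_1^{\rm loc}(X;x)$ completes the argument. For property (6), assume $K_T \sim 0$. Adjunction yields $(K_Y + T)|_T = K_T \sim 0$, so the Cartier divisor $K_Y + T$ is trivial along the contracted fiber $T$ and descends to a Cartier divisor on $X$. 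This forces $K_X$ to be Cartier and satisfy $\phi^* K_X = K_Y + T$, exhibiting $T$ as a log canonical place and showing that $(X;x)$ is log canonical.

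The main obstacle is property (5). The naive total space of $N$ over an snc variety $T$ acquires non-isolated singularities along the line bundle restricted to $\operatorname{Sing}(T)$, and after contracting $T$ these singularities would persist in $X \setminus \{x\}$. To overcome this, one must replace the naive $Y$ with a different model that is smooth away from $T$; this can be done by a smoothing of the total space that eliminates the non-vertex singularities. The dimensional restriction $\dim T \leq 4$ enters precisely through the vanishing of the relevant obstruction groups, since $\operatorname{Sing}(T)$ has dimension at most $\dim T - 1 \leq 3$ and smoothings of such loci exist by the arguments of \cite[Theorem 8, Propositions 9 and 10]{Kol12}. Once $Y$ is smooth outside $T$, the contraction $\phi\colon Y \to X$ is an isomorphism over $X \setminus \{x\}$, yielding an isolated singularity at $x$.
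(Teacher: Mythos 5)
The paper does not actually prove this proposition: it is quoted from Koll\'ar's work, and the stated ``proof'' is the citation to \cite[Theorem 8, Propositions 9 and 10]{Kol12}. Your sketch captures the right overall strategy (contract $T$ inside a thickening $Y$ with anti-ample conormal bundle, then verify the properties by adjunction and homotopy exact sequences), but it has a genuine gap at the construction of $Y$, and the gap is more serious than you acknowledge. When $T$ is reducible --- the only case of interest here, since $T$ is an snc variety with many components --- the total space $\operatorname{Tot}(N)$ is itself reducible and non-normal: near a double point of $T$ it looks like $(xy=0)\times\cc^{\,n}$, with one component for each local branch of $T$. Contracting the zero section of such a $Y$ does not produce a germ of a \emph{normal} singularity, and $\phi$ is not a partial resolution; the naive model therefore fails well before property (5). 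The actual content of \cite[Propositions 9 and 10]{Kol12} is to produce an \emph{irreducible} $Y$, smooth in a neighborhood of $T$, containing $T$ as an snc divisor with $\mathcal{O}_Y(T)|_T\simeq K_T\otimes L^{-m}$; the specific form of the normal bundle and the restriction $\dim T\le 4$ arise inside that global construction, and your appeal to ``vanishing of the relevant obstruction groups'' for a smoothing of $\operatorname{Tot}(N)$ is not substantiated. Treating this as a patch applied only to secure (5) misstates where the difficulty lies.

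A second, related problem: your verification of (4) uses that $Y\setminus T\to T$ is a $\cc^*$-fibration. That is true only for the discarded model $\operatorname{Tot}(N)$. For the corrected $Y$ (smooth near $T$, with $T$ snc), the complement $Y\setminus T$ near a $k$-fold intersection point looks like $(\cc^*)^k\times\cc^{\,n+1-k}$, and the kernel of $\pi_1(Y\setminus T)\to\pi_1(Y)$ is a priori normally generated by one loop $\gamma_i$ per irreducible component $T_i$; showing that all the $\gamma_i$ coincide and generate a cyclic central subgroup requires an additional argument, which is part of what \cite[Theorem 8]{Kol12} proves. Your arguments for (1)--(3) and (6) do go through once Koll\'ar's $Y$ is granted, but as written the proposal verifies (1)--(4) and (6) on one model and proposes to repair (5) by passing to a different one without rechecking anything.
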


\begin{remark}\label{rem:very-ample-prescribed-exceptional} {\em
In Proposition~\ref{prop:prescribed-exceptional} we only need $L^m$ to be very ample. In our application of this proposition, we will have two ample line bundles $L_1$ and $L_2$. By \cite[Chapter II. Exercise 7.5]{Har77}, for $k \gg 1$, we have that $L_1 \otimes L_2^k$ is very ample. Thus, in the previous proposition, we may replace $L^m$ with $L_1 \otimes L_2^k$ and yield the same conclusion of the proposition, up to replacing
$L^{-m}$ with $L_1\otimes L_2^k$ in Proposition~\ref{prop:prescribed-exceptional}.$(2)$.
}
\end{remark}

\begin{definition}\label{def:blow-up-snc-reducible-var}
{\em
Let $T$ be an $n$-dimensional simple normal crossing variety. Let $F$ be an $(n-1)$-dimensional stratum.
Let $Z$ be a divisor in $F$ that intersects transversally all the strata of $T$.
We will describe blow-ups in each irreducible component of $T$ and then glue them together. In this definition, when we say the blow-up of a subvariety $V \subset W$, we mean the blow-up defined by the reduced scheme $V$ in $W$.
Let $E$ be an irreducible component, we will say $F_E$ and $Z_E$ for the intersections of $F$ and $Z$ with $E$, respectively. 

We first perform the blow-up of $Z_E$ in $E$, call it $p:E' \rightarrow E$. As $Z_E$ in $F_E$ has codimension at most $1$, we can identify $F_E$ with its strict transform in $E'$. Let $D_E$ be the exceptional divisor of $p:E' \rightarrow E$. Then, we perform the blow-up of $D_E \cap F_E$ in $E'$ and call it $p':E''\rightarrow E'$. Similarly, the locus of the blow-up has codimension at most $1$ in $F$. Thus, we can identify again $F_E$ with its strict transform in $E''$. Therefore, we still have the datum for gluing the blow-ups of the irreducible components in $T$. So, we can glue the irreducible components $E''$, to obtain a simple normal crossing variety $T''$. This will be called the {\em iterated blow-up of $Z$ in $T$}. Here we can also identify $Z_F$ in $T''$ as the intersection of the exceptional divisor with $F_E$ in $E''$. We call $D_1$ the union of all the strict transforms of $D_E$ for every irreducible component $E''$, and we call $D_2$ the union of the exceptional divisors of $p':E'' \rightarrow E'$ for every irreducible component $E''$. Then, $D_1$ and $D_2$ are Cartier Divisors on $T''$. }
\end{definition}

\begin{remark}\label{rem:blow-up-CY}
{\em 
Let $T$ be an $n$-dimensional simple normal crossing Calabi-Yau variety, such that all the codimension $c$ strata are contained in exactly $c+1$ irreducible components. Then, $T''$ the iterated blow-up defined in Definition~\ref{def:blow-up-snc-reducible-var} is also Calabi-Yau. 
Indeed, we only need to check the statement after the first blow-up, our snc variety $T'$ is also Calabi-Yau. This can be checked at each irreducible component $E_i$. Let us call $Z_i:=Z \cap E_i$ and $F_i:=F \cap E_i$. Let $p:E_i' \rightarrow E_i$ be the blow-up of $Z_i$ on $E_i$. Let $D$ be the exceptional divisor of $p:E' \rightarrow E$.

Let $c$ be the codimension of $Z_i$ in $E_i$. Then, $p^\ast(K_{E_i})=K_{E_i}-(c-1)D$. And for any other component $E_j$ that contains $Z_i$, we have that  $p^\ast(E_J)|_{E_i}=E_J|_{E_i}+D$. By our hypothesis, $F_i'$ is contained in $c$ irreducible components of $T'$. Thus, $Z_i'$ is contained in $c-1$ irreducible components of $T'$, other than $E_i$. Hence, adding our equalities for every irreducible component containing $Z_i$, we obtain that $K_{T'}|_{E_i}$ is trivial.
}
\end{remark}    

\begin{lemma}\label{lem:ample-blow-up}
Let $T$ be an snc variety with an irreducible component $E$. Let $F$ be a codimension one stratum contained in $E$. Moreover, let $Z$ be a smooth divisor in $F$ that intersects transversally all the strata of $T$. Let $T'' \rightarrow T$ be the iterated blow-up of $Z$ in $T$, as in Definition~\ref{def:blow-up-snc-reducible-var}.
Let $D_1$ and $D_2$ be the exceptional divisors over $T$, as in Definition~\ref{def:blow-up-snc-reducible-var}. Then, the divisors $-3D_2-2D_1$ and $-4D_2-3D_1$ are relatively ample over $T$.
\end{lemma}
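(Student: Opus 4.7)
The plan is to apply the relative Kleiman criterion: a divisor on $T''$ is $\pi$-ample for the composition $\pi\colon T''\to T$ exactly when it has positive degree on every integral curve contracted by $\pi$. I would first factor $\pi$ as $\pi=\pi_1\circ\pi_2$, where $\pi_1\colon T'\to T$ is the first blow-up of $Z$ (performed on each irreducible component of $T$ containing $Z$) and $\pi_2\colon T''\to T'$ is the subsequent blow-up of $W:=D_E\cap F_E$ (also performed component-wise). The curves contracted by $\pi$ lie in the fibres over points of $Z$. Working in local coordinates on a single irreducible component $E$ (say $F=\{x_1=0\}$ and $Z=\{x_1=x_2=0\}$), the fibre over $p\in Z$ is a chain $C_1\cup C_2$ of two smooth rational curves meeting transversally at a single node: $C_1\subset D_1$ is the strict transform under $\pi_2$ of the $\pp^1$-fibre $\ell$ of $\pi_1$ at $p$, and $C_2\subset D_2$ is a fibre of the $\pp^1$-bundle $D_2\to W$ over the unique point of $W$ lying above $p$. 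When several irreducible components of $T$ meet along $Z$, the gluing produces several such chains, possibly sharing points on $F$, but no new contracted curves appear.

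Next I would compute the four intersection numbers $D_i\cdot C_j$ from the local blow-up charts. The standard exceptional-fibre computation gives $D_2\cdot C_2=-1$, while $D_1\cdot C_2$ and $D_2\cdot C_1$ both equal $1$ since in each case the divisor and the curve meet transversally at the node of the chain. For the last number, since $W$ is a smooth divisor in the smooth divisor $D_E$, a local chart calculation shows $\pi_2^{\ast}D_E=D_1+D_2$, and then the projection formula gives
\[
D_1\cdot C_1+D_2\cdot C_1=\pi_2^{\ast}D_E\cdot C_1=D_E\cdot\ell=-1,
\]
so $D_1\cdot C_1=-2$. A direct check then yields $(-3D_2-2D_1)\cdot C_1=1$ and $(-3D_2-2D_1)\cdot C_2=1$, as well as $(-4D_2-3D_1)\cdot C_1=2$ and $(-4D_2-3D_1)\cdot C_2=1$. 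Thus both divisors pair positively with every curve in every fibre of $\pi$, and the relative Kleiman criterion concludes the $\pi$-ampleness of $-3D_2-2D_1$ and $-4D_2-3D_1$.

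The hard part will be the bookkeeping needed to confirm that no additional contracted curve can arise at points of $Z$ that lie on lower-dimensional strata of $T$ or through the gluing of the blow-ups of distinct irreducible components containing $Z$. Since the two-step blow-up is carried out component-by-component and $F$ is identified with its strict transform at every stage, the local picture near any such point remains a (possibly non-disjoint) union of $C_1\cup C_2$ chains of the type above, one for each component containing $p$; the numerical inequalities established for a single chain therefore apply on each contracted curve separately, completing the verification.
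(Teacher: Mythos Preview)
Your overall strategy coincides with the paper's: reduce to each irreducible component, identify two curve classes $C_1\subset D_1$ and $C_2\subset D_2$, compute the four intersection numbers $D_i\cdot C_j$, and then check positivity. Your values $D_2\cdot C_2=-1$, $D_1\cdot C_2=D_2\cdot C_1=1$, $D_1\cdot C_1=-2$ are correct and agree with the paper's (which obtains them via $K_{E''}$ and adjunction rather than your pull-back formula $\pi_2^{\ast}D_E=D_1+D_2$).

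There is, however, a genuine gap. Your description of the fibre of $\pi$ over a point $p\in Z$ as a chain $C_1\cup C_2$ of two $\pp^1$'s is only valid on components $E_i$ where $Z_i:=Z\cap E_i$ has codimension $2$, i.e.\ on the two components containing $F$. For any other component $E_k$ meeting $Z$, the intersection $Z_k=Z\cap E_k$ sits inside the lower-dimensional stratum $F\cap E_k$ and has codimension $c\geq 3$ in $E_k$. Over such a component the first blow-up has $\pp^{c-1}$-fibres, and after the second blow-up the fibre of $E_k''\to E_k$ over $p$ is $\mathrm{Bl}_{q}\pp^{c-1}\cup\pp^{c-1}$, a $(c-1)$-dimensional variety. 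This fibre contains many curve classes besides $C_1$ and $C_2$ (for instance lines in the exceptional $\pp^{c-2}$, or strict transforms of lines in $\pp^{c-1}$ not through $q$), so checking positivity on $C_1$ and $C_2$ alone is not a priori sufficient, and your final paragraph's claim that ``the local picture \dots remains a union of $C_1\cup C_2$ chains'' is incorrect.

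The paper closes this gap with an extra step you are missing: it proves that $C_1$ and $C_2$ generate the relative cone of curves $\overline{NE}(E_k''/E_k)$ for every component. Since the relative Picard rank is $2$, the cone has two extremal rays, and the paper identifies them by taking an arbitrary relatively nef divisor $N$ that is positive on $C_1$ and $C_2$ and arguing (via the $\pp^{c-1}$-bundle structures of $D_1$ and $D_2$) that $N$ is then relatively ample. Once this is established, your intersection computations, which in fact go through unchanged for general $c$, finish the proof. So your argument is salvageable, but it needs this cone-generation step in place of the incorrect fibre description.
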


\begin{proof}
It suffices to check that the restrictions of these divisors to every irreducible component are ample over $T$.
Let $E_i$ be an irreducible component of $T$. Let us call $Z_i:=Z \cap E_i$ and $F_i:=F \cap E_i$. We will call $F_i''$ the strict transform of $F_i$ in $E_i''$.
So, we have the blow-ups $p_2:E_i'' \rightarrow E_i'$ and $p_1:E_i' \rightarrow E_i$. By abuse of notation, the intersections of $D_1$ and $D_2$ with $E_i''$ are also called $D_1$ and $D_2$. We will define $Z_i''$ to be the intersection of the exceptional divisor with $F_i''$.

As $Z_i$ in $E_i$ has codimension $c\geq 2$. The fiber of the exceptional divisor of $p_1:E_i' \rightarrow E_i$ over $Z_i'$ are $\mathbb{P}^{c-1}$. When we perform the second blow-up $p_2:E_i'' \rightarrow E_i'$, in the fibers of $D_1$ over $Z_i'$ we are performing the blow-up at one smooth point. Hence, the fibers of $D_1$ over $Z_i''$ are the blow-ups of $\mathbb{P}^{c-1}$ at one point, and the fibers of $D_2$ over $Z_i''$ are $\mathbb{P}^{c-1}$.

Let $C_1$ be a curve in $D_1$ that intersects $D_2$ transversally in one point and $C_2$ a curve in $D_2$ that intersects $D_1$ transversally in one point.

We first prove that $C_1$ and $C_2$ generate the relative cone of curves of $E'' \rightarrow E$. Indeed, let $N$ be a relatively nef divisor that is positive on both curves. There must be a curve $C_2'\sim C_2$, such that $C_2'$ lies in $D_1 \cap D_2$ and is contracted on $E$. We have that $C_1$ and $C_2'$ generate the relative cone of curves of $E'$. Hence, $N$ is relatively ample on $D_1$. Furthermore, $N$ is relatively ample on $D_2$, since it intersects a curve positively and $D_2$ has relative Picard rank 1. Therefore, $N$ intersects positively any exceptional curve, hence it is relatively ample. Thus, there cannot be any other generator of the relative cone of curves.

As $C_1$ and $C_2$ generate the relative cone of curves, we only need to compute the intersection products with these curves. We have that $p_1^\ast(K_{E'})=K_{E''}-(c-1)D_2$. Hence, $K_{E''} \sim_{E_i'} (c-1)D_2$. By adjunction, we obtain the value of $cD_2 \cdot C_2=(K_{E''}+D_2) \cdot C_2=K_{D_2}\cdot C_2=-c$. Therefore, $D_2 \cdot C_2=-1$.
Similarly, we have that $(p_2\circ p_1)^\ast (K_E)=K_{E''}-(c-1)D_1-(2c-2)D_2$, hence $K_{E''}\sim_{E} (c-1)D_1+(2c-2)D_2$. By adjunction, $(cD_1+(2c-2)D_2)\cdot D_1=(K_{E''}+D_1)\cdot C_1=K_{D_1}\cdot C_1=-2$, we conclude that $D_1\cdot C_1=-2$.
This implies that $-3D_2-2D_1$ and $-4D_2-3D_1$ are relatively ample over $T$.
\end{proof}

\begin{proof}[Proof of Theorem~\ref{introthm:from-sm-poly-comp-to-lc-sing}]

By Proposition~\ref{prop:toric-variety-from-pol-complex}, we have a Toric Calabi-Yau snc variety $T$, with an ample line bundle $H$. We will perform iterated blow-ups with loci contained in disjoint irreducible components. Let $E$ be an irreducible component of $T$. Pick $F$ a codimension 1 stratum of $T$ contained in $E$. Let $Z$ be a smooth divisor of $F$ that intersects transversally all the strata.

We perform the iterated blow-up of $Z$ in $T$, as in Definition~\ref{def:blow-up-snc-reducible-var}. Let us call these maps
$p_1:T'' \rightarrow T'$ and $p_2: T' \rightarrow T$. Let $D_1$ be the exceptional divisor of the first blow-up and $D_2$ be the exceptional divisor of the second blow-up in $E'$. By Lemma~\ref{lem:ample-blow-up}, the divisors $p^\ast(mH)-3D_2-2D_1$  and $p^\ast(mH)-4D_2-3D_1$ are ample. We can perform this same procedure for $G$ an irreducible component disjoint from $E$, with exceptional divisors $G_1$ and $G_2$. Thus,
$p^\ast(mH)-3G_2-2G_1$ and $p^\ast(mH)-4G_2-3G_1$ are ample.

Now, consider the following line bundles
$L_1:=p^*(mH)-3D_2-2D_1-4G_2-3G_1$ and $L_2:=p^*(mH)-3D_2-2D_1-3G_2-2G_1$. By our construction, we have smooth rational curves $P_1$ and $P_2$ that do not intersect any irreducible component other than $E$ and $G$, respectively.
By Remark~\ref{rem:blow-up-CY}, we have that $T''$ is a simple normal crossing Calabi-Yau variety. For $k \gg 1$, the line bundle $L_1 \otimes L_2 ^k$ is very ample. Hence, as mentioned in Remark~\ref{rem:very-ample-prescribed-exceptional}, we can use Proposition~\ref{prop:prescribed-exceptional}. Then, there exists an
$(n+1)$-dimensional 
singularity $(X;x)$ and a partial resolution:
\[ 
\begin{tikzcd}
T  \arrow{d} \arrow[r, hook] & Y \arrow{d}{\phi} \\
x \arrow[r, hook] &  X
\end{tikzcd}
\]
satisfying the following
statements: 
\begin{enumerate}
    \item $T$ is a Cartier divisor on $Y$,
    \item the normal bundle of $T$ in $Y$ is $L_1 ^{-1} \otimes {L_2}^{-k}$,
    \item we have an isomorphism $\pi_1(Y)\simeq \pi_1(|\mathcal{P}|)$,
    \item the kernel of $\pi_1^{\rm loc}(X;x) \twoheadrightarrow \pi_1(Y)$ is cyclic, central, and generated by any loop around an irreducible component of $T$, and
    \item the singularity $(X;x)$ is log canonical.
\end{enumerate}

Since $T$ and $Y$ are smooth along $P_i$, the restriction of the normal bundle to $P_i$ are lens spaces $S_i$, with $|\pi_1(S_i)|=c_1(N) \cap P_i$.  The order of any loop around $T$ divides $|\pi_1(S_i)|$.

The restriction of $L_1$ to $P_1$ is $\mathcal{O}(2)$ and its restriction to $P_2$ is $\mathcal{O}(1)$.
Furthermore, the restriction of $L_2$ to $P_1$ is $\mathcal{O}(1)$ and its restriction to $P_2$ is $\mathcal{O}(1)$. Hence, the restrictions of the normal bundle $L_1 \otimes L_2^k$ to these curves are $\mathcal{O}(k)$ and $\mathcal{O}(k+1)$. Thus, the Lens spaces $S_1$ and $S_2$ have coprime orders, therefore all the loops around $T$ are trivial in the local fundamental group of $(X;x)$. 
Therefore, the kernel of $\pi_1^{\rm loc}(X;x) \twoheadrightarrow \pi_1(Y)$ is trivial, so we have the following isomorphisms:

$$\pi_1^{\rm loc}(X;x) \cong \pi_1(Y) \cong \pi_1(|\mathcal{P}|) $$
\end{proof}

\section{Threefold log
canonical singularities}
\label{sec:3-fold-sing}

In this section, 
we study the fundamental groups of log canonical $3$-fold singularities.
We show positive and negative results.
First, we prove that surface groups
appear among the fundamental groups
of lc $3$-fold singularities.
Then, we prove that no $F_r$ with $r\geq 2$ appears as the fundamental group of an lc $3$-fold singularity.

\subsection{Surface groups in dimension $3$}
\label{subsec:surface-groups-in-dim-3}
In this subsection, we show that surface groups appear as the fundamental group of  a log canonical $3$-fold singularity.

\begin{proof}[Proof of Theorem~\ref{introthm:surf-group-in-3}]
Let $S$ be a $2$-dimensional manifold and $G_S$ be its fundamental group. Let $\mathcal{T}$ be a triangulation of  $S$. We can consider $\mathcal{T}$ as a polyhedral complex where each triangle is given by
\[
T={\rm conv}\{(0,0),(0,3),(3,0)\}\subset \qq^2.
\]
We may assume that each vertex of $\mathcal{T}$ is contained in at most $7$ triangles.
Let $v$ be a vertex of degree $d$ at least $4$. 
Let $P_{d}$ be a smooth lattice polygon
with no lattice points in the relative interior of its edges.
Let $e_1,\dots,e_{d}$ be its edges.
Let $T_1,\dots,T_{d}$ be the triangles containing $v$.

We replace $T_i$ with $T'_i$, where:
\begin{enumerate}
    \item If $v=(0,0) \in T_i$, then we replace
    $T_i$ with $T_i\cap H_1^+$, where 
    $H_1=\{(x,y)\mid x+y\geq 1\}$. By doing so, we
    delete the vertex $(0,0)$ and add the vertices
    $(1,0)$ and $(0,1)$.
    \item If $v=(0,3)\in T_i$, then we replace $T_i$
    with $T_i\cap H_2^-$, where $H_2=\{(x,y)\mid y=2\}$. By doing so, we delete the vertex $(0,3)$ and add the vertices $(1,2)$ and $(0,2)$.
    \item If $v=(3,0)\in T_i$, then we replace $T_i$ with $T_i\cap H_3^-$, where $H_3=\{(x,y)\mid x=2\}$.
    By doing so, we delete the vertex $(3,0)$ and add
    the vertices $(2,0)$ and $(2,1)$.
\end{enumerate}

Let us call $f_i$ the edge of $T'_i$ joining the two new vertices.
For each $i\in \{1,\dots,d\}$ we can consider a lattice isomorphism that glues $e_i$ with $f_i$.
By doing so, we obtained a $2$-dimensional polyhedral complex $\mathcal{T}'$ homotopic to $T$, that does not contain the singular point $v$ of $\mathcal{T}$ and has no other singular point introduced. Since our construction works for any singular point of $T$, even after doing it for any other vertex. Performing this construction for all the singular vertices, yields a $2$-dimensional smooth polyhedral complex $\mathcal{P}_S$ that is homotopic to $S$. 
Then, by Theorem~\ref{introthm:from-sm-poly-comp-to-lc-sing}, we obtain an isolated $3$-dimensional lc singularity $(X_S;x)$ for which the isomorphism $\pi_1^{\rm loc}(X_s;x)\simeq G_S$ holds.
\end{proof} 

\subsection{Free groups and $3$-fold singularities}
\label{subsec:no-Fr-in-dim-3}
In this subsection, we show that no free $F_r$, with $r\geq 2$, appears as the fundamental group of a log canonical $3$-fold singularity.
To do so, we will first prove some lemmas.

\begin{lemma}\label{lem:fibrations}
Let $(E,B_E)$ be a log Calabi--Yau dlt surface for which $\mathcal{D}(E,B_E)\simeq S^1$.
There exists a sequence of blow-ups 
$E'\rightarrow E$ at $0$-dimensional strata of $B_E$
satisfying the following:
\begin{itemize}
    \item there exists a rational movable curve $C$ on $E'$ that is contained in the smooth locus of $E'$, and 
    \item the curve $C$ intersects $B_{E'}$ transversally at two points, where $(E',B_{E'})$ is the log pull-back of $(E,B_E)$.
\end{itemize}
\end{lemma}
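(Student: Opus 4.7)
The plan is to produce on a suitable blow-up $E'\to E$ a rational fibration $\pi\colon E'\to\mathbb{P}^1$; a general fibre of $\pi$ will furnish the required curve $C$. The hypothesis $\mathcal{D}(E,B_E)\simeq S^1$ forces $\lfloor B_E\rfloor = E_1+\cdots+E_n$ to be a cycle of smooth rational curves, each $E_i$ meeting $E_{i+1}$ in a single node (indices mod $n$), so the $0$-dimensional strata of $B_E$ are precisely the nodes $p_i = E_i\cap E_{i+1}$.

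First, I would pass to a $\mathbb{Q}$-factorial dlt modification and run a $(K+(1-\varepsilon)B)$-MMP, obtaining a Mori fibre space $\bar E\to Z$. Since $K_E+B_E\sim_{\mathbb{Q}} 0$ and the coregularity of $(E,B_E)$ is $0$, each extremal contraction meets the boundary positively, and the cyclic structure of $\lfloor B\rfloor$ is preserved (at worst, some $E_i$ is contracted onto a node of the image, which keeps the dual complex an $S^1$). If $\dim Z = 1$, then $Z\simeq\mathbb{P}^1$ and we already have a rational fibration. If $\dim Z = 0$, then $\bar E$ is a Fano surface of Picard number one whose anticanonical boundary is a cycle of rational curves, and a short explicit classification shows that after blowing up one node of this cycle one obtains a new extremal ray whose contraction is a $\mathbb{P}^1$-fibration to $\mathbb{P}^1$. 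In either case we arrive at a Mori fibration $\bar E\to\mathbb{P}^1$ compatible with the boundary cycle.

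Next, I take a common log resolution $E'$ of the birational map $E\dashrightarrow \bar E$. The MMP analysis above forces every birational operation to take place over $0$-dimensional strata of the (successively modified) boundary cycle, so the morphism $E'\to E$ is a composition of blow-ups at $0$-dimensional strata of the boundary at each stage. Pulling back yields the desired $\pi\colon E'\to\mathbb{P}^1$. A general fibre $C$ of $\pi$ is a smooth $\mathbb{P}^1$ moving in a basepoint-free pencil that covers $E'$, so $C$ is movable. Because $K_{E'}+B_{E'}$ restricted to any general fibre is $\mathbb{Q}$-linearly trivial, adjunction gives $C\cdot B_{E'} = 2$. A general fibre avoids the finitely many singularities of $E'$ and the finitely many nodes of $B_{E'}$, hence lies in the smooth locus of $E'$ and meets $B_{E'}$ transversally in exactly two points.

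The principal obstacle will be the MMP analysis of the first step: one must check that the $(K+(1-\varepsilon)B)$-MMP can be arranged so that the $S^1$ dual complex is preserved throughout (in particular, the cycle is never shrunk to a single point), so that the inverse of the birational map $E\dashrightarrow \bar E$ really does decompose as blow-ups centred at $0$-dimensional strata of the boundary. The Fano, Picard-number-one case additionally requires an explicit verification that a single blow-up at a node always suffices to create a $\mathbb{P}^1$-ruling on the resulting surface.
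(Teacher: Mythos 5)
Your strategy (run a $(K_E+(1-\varepsilon)B_E)$-MMP to reach a Mori fiber space, then transport a general fiber of the resulting ruling back to a blow-up of $E$) is genuinely different from the paper's. The paper instead invokes \cite[Proposition~1.3]{GHS16}: a dlt log Calabi--Yau surface with $\mathcal{D}(E,B_E)\simeq S^1$ is crepant birational to a \emph{toric} log Calabi--Yau surface $(S,B_S)$. After toric blow-ups (automatically centred at torus-fixed points, i.e.\ at $0$-dimensional strata) the toric surface fibers over $\mathbb{P}^1$, and the general fiber is carried back to $E$ by extracting the log-discrepancy-one exceptional curves on a model $S'\to S$ and observing that a general fiber avoids the exceptional locus of $S'\to E$. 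This sidesteps both of the points where your argument has genuine gaps.

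The first gap is the Picard-rank-one case. It is not true that blowing up \emph{one node} of the anticanonical cycle of a rank-one log del Pezzo always creates a $\mathbb{P}^1$-fibration. Take $\mathbb{P}(1,1,2)$ with its toric boundary $D_1+D_2+D_3$ ($D_1,D_2$ rulings through the vertex, $D_3$ the conic at infinity) and blow up the smooth node $p=D_1\cap D_3$: the resulting surface has Picard rank two, but its second extremal ray is spanned by the strict transform of $D_1$, which has self-intersection $-\tfrac12$, so both extremal contractions are birational and there is no ruling. One must blow up the singular node instead, and in general the existence of a suitable node (or of a controlled sequence of stratum blow-ups producing a ruling) is precisely what needs to be proved; the ``short explicit classification'' you appeal to is neither short nor supplied. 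The second gap is the claim that the common log resolution of $E\dashrightarrow\bar E$ decomposes into blow-ups at $0$-dimensional strata: a step of your MMP may contract a $(-1)$-curve meeting the cycle transversally at a single smooth, non-nodal point of one boundary component, and undoing that contraction is a blow-up at a point that is not a stratum. This second issue is avoidable --- the indeterminacy locus of $E\dashrightarrow\bar E$ is finite, so a general fiber lifts isomorphically to $E$ and one should transport only the fiber rather than resolve the whole map --- but the blow-ups of $E$ that the lemma does require are exactly those separating the fiber from the base point created in the rank-one case, so everything still hinges on closing the first gap.
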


\begin{proof}
We know that $(E,B_E)$ is log crepant equivalent to a log Calabi--Yau toric surface $(S,B_S)$ (see, e.g.,~\cite[Proposition 1.3]{GHS16}).
We may assume that $S$ is smooth.
By blowing-up strata of $B_E$ and $B_S$, we may assume that for each component $P$ of $B_E$ the center of $P$ on $S$ is a divisor, and vice-versa.
By further blowing-up strata of $B_E$ and $B_S$, we may assume that
$S$ admits a toric fibration $p$ to $\pp^1$.
Note that the general fiber of $p$ is a movable rational curve that intersects $B_S$ transversally at two points.
Let $C_1,\dots,C_k \in E$ be the exceptional curves of $E\dashrightarrow S$. 
Then, we have that $a_{C_i}(E,B_E)=1$ for each $C_i$.
In particular, $C_i$ may be extracted on $S$ by performing a blow-up at a closed point on $B_S$.
Let $S'\rightarrow S$ be the model where all the $C_i$'s are extracted.
Let $(S',B_{S'})$  be the log pull-back of $(S,B_S)$ to $S'$.
Let $C'$ be the strict transform of a general fiber of $p$ on $S'$.
Note that $C'$ is a movable rational curve that intersects $B_{S'}$ transversally at two points.
Furthermore, we have a morphism 
$q\colon S'\rightarrow E$.
Note that no exceptional curve of $q$ intersects $C'$.
Otherwise, we would have two components
of $B_{S'}$ that do not intersect on $S'$ but their images intersect on $E$.
This would contradict the fact that $\mathcal{D}(E,B_E)\simeq S^1$.
Thus, we deduce that the image $C_E$ of $C'$ on $E$ is a rational movable curve that intersects $B_E$ transversally at two points.
\end{proof}

\begin{lemma}\label{lem:nice-3-fold-dlt-mod}
Let $(X;x)$ be an isolated lc $3$-fold singularity.
There exists a $\qq$-factorial dlt modification $\phi\colon Y\rightarrow X$
and a divisor $E_Y$ fully supported 
on ${\rm Ex}(\phi)$ that is effective
and satisfy:
for every movable curve $C_i$ on an irreducible component $E_i$ of $E$,
we have that $-E_Y\cdot C_i>0$.
\end{lemma}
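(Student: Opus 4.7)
By Lemma~\ref{lem:dlt-mod}, pick a $\qq$-factorial dlt modification $\phi\colon (Y,E)\to (X;x)$, where $E=\sum_{i=1}^n E_i$ is the reduced exceptional divisor. Since $(X;x)$ is isolated, $\phi$ is an isomorphism over $X\setminus\{x\}$ and $\mathrm{Ex}(\phi)=\phi^{-1}(x)=\bigcup_i E_i$ consists of projective surfaces. The plan is to construct an effective $\qq$-divisor $E_Y=\sum_i a_i E_i$ with every $a_i>0$ for which $-E_Y$ is $\phi$-ample; by the relative Kleiman criterion this yields $-E_Y\cdot C>0$ for every curve $C$ contracted by $\phi$, in particular for every movable curve $C_i$ on any irreducible component $E_i$ of $E$.

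For the construction, since $Y$ is $\qq$-factorial and $\phi$ is projective, there exists a $\phi$-ample $\qq$-Cartier divisor $H$ on $Y$. I would first argue that the classes $[E_1],\ldots,[E_n]$ span the relative N\'eron--Severi space $N^1(Y/X)_\qq$: every relative curve lies in $\bigcup_i E_i$, so a $\qq$-Cartier divisor class on $Y$ is determined by its intersections with such curves; since pullbacks from $X$ are $\phi$-numerically trivial, the non-exceptional components of $H$ can be absorbed into such pullbacks, producing an exceptional representative $H\equiv_\phi \sum_i c_iE_i$. Setting $E_Y=-\sum_i c_i E_i$ then makes $-E_Y$ $\phi$-ample. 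The negativity lemma implies that no nonzero effective $\phi$-exceptional divisor is $\phi$-nef, so inside the exceptional subspace the $\phi$-ample cone is disjoint from the closed non-negative orthant, and hence the $\phi$-anti-ample cone meets the interior of the positive orthant; a small perturbation within the open $\phi$-ample cone upgrades any $a_i\geq 0$ to $a_i>0$.

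The principal obstacle is this last step: guaranteeing that every coefficient $a_i$ is strictly positive. This reduces to a convex-geometric assertion about the placement of the $\phi$-ample cone inside the exceptional subspace of $N^1(Y/X)_\rr$, and it ultimately rests on the negativity of the exceptional divisors under $\phi$ combined with the openness of the ample cone. Once this is verified, $-E_Y$ is $\phi$-ample and the relative Kleiman criterion yields the required intersection inequalities with all movable curves on all irreducible components, completing the proof.
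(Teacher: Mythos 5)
Your plan hinges on producing an effective exceptional divisor $E_Y=\sum_i a_iE_i$ with $-E_Y$ $\phi$-ample, and the step where you write a $\phi$-ample class as $H\equiv_\phi\sum_i c_iE_i$ is where the argument breaks. To ``absorb the non-exceptional components of $H$ into pullbacks from $X$'' you need $\phi_*H$ to be $\qq$-Cartier on $X$, i.e.\ essentially that $X$ is $\qq$-factorial; the lemma assumes nothing of the sort, and for a non-$\qq$-factorial $X$ the classes $[E_1],\dots,[E_n]$ need not span $N^1(Y/X)_\qq$. Worse, the stronger conclusion you are aiming for is simply false in general: a $\qq$-factorial dlt modification of a non-$\qq$-factorial isolated singularity can contract curves $C\subset\phi^{-1}(x)$ that are not contained in any exceptional divisor. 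For such a $C$ one has $E_i\cdot C\geq 0$ for all $i$, while the negativity lemma forces any $\phi$-anti-nef exceptional combination $E_Y$ to be effective, so $-E_Y\cdot C\leq 0$ — incompatible with $\phi$-ampleness. This is exactly why the lemma is stated only for movable curves lying \emph{on the components} $E_i$, a weaker positivity that cannot be obtained from relative Kleiman on $Y$ itself. (The issue you flagged as the ``principal obstacle'' — strict positivity of the $a_i$ — is actually the easy part: given $\phi$-ampleness of $\sum c_iE_i$, negativity gives $E_Y:=-\sum c_iE_i\geq 0$, and full support follows by testing against a movable curve on any component with coefficient zero.)

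The paper's proof works around both problems by constructing $E_Y$ on a higher model. One takes a log resolution $\psi\colon X'\to X$ obtained by blowing up centers, which genuinely carries an effective exceptional divisor whose negative is ample over $X$; runs the $(K_{X'}+E_0)$-MMP over $X$ to reach the dlt modification $(Y,E)$; chooses a resolution $p\colon X''\to Y$ over $X$ with an effective $E_{X''}$ such that $-E_{X''}$ is ample over $X$; and sets $E_Y:=p_*E_{X''}$. The negativity lemma gives $p^*(-E_Y)=-E_{X''}+F$ with $F\geq 0$ and $p$-exceptional, so $p(\operatorname{Supp}F)$ has codimension at least $2$ in $Y$; a movable curve $C_i$ on $E_i$ is therefore not contained in it, and the projection formula yields $-E_Y\cdot C_i\geq -E_{X''}\cdot C_i''>0$. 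If you want to salvage your approach, you would need to either assume $X$ is $\qq$-factorial or replace relative ampleness on $Y$ by this kind of comparison with a model where anti-ampleness of an exceptional divisor is actually available.
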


\begin{proof}
Let $\psi\colon X'\rightarrow X$ be a log resolution obtained by blowing-up centers of codimension at least $2$.
Then, there exists an effective divisor 
$E_{X'}$ supported on $E_0:={\rm Ex}(\psi)$ such that $-E_{X'}$ is ample over the base.
We run a $(K_{X'}+E_0)$-MMP over $X$ that terminates with a dlt modification $(Y,E)$.
Let $X''\rightarrow X'$ be a resolution for which there exists a morphism
$p\colon X''\rightarrow Y$.
Therefore, there exists an effective divisor $E_{X''}$ supported on ${\rm Ex}(X''\rightarrow X)$
for which $-E_{X''}$ is ample over $X$.
We set $E_Y$ to be the push-forward of $E_{X''}$ to $Y$.
Then, the statement follows from the negativity lemma.
Indeed, we can write
\[
p^*(-E_Y)=-E_{X''}+F, 
\]
where $F$ is an effective divisor
exceptional over $Y$.
Let $C_i$ be a movable curve on $E_i$.
Then, $C_i$ is not contained in $p(F)$
as $p(F)$ has codimension at least two in $Y$.
Let $C''_i$ be the strict transform of $C_i$ on $X''$.
The following equalities hold 
by the projection formula 
and the fact that $C''_i$ is not contained in the support of $F$
\[
-E_Y\cdot C_i = 
p^*(-E_Y)\cdot C_i=
(-E_{X''}+F)\cdot C_i\geq 
-E_{X''}\cdot C_i >0.
\]
This finishes the proof of the lemma.
\end{proof}

\begin{proof}[Proof of Theorem~\ref{introthm:not-free-in-3}]
Note that $F_3$ is a normal subgroup of $F_2$ of index $2$. 
Thus, we may assume that $r\geq 3$.

Let $(X;x)$ be a $3$-dimensional isolated lc singularity.
We assume that $\pi_1^{\rm loc}(X;x)\simeq F_r$ for some $r\geq 3$.
Let $(X';x')\rightarrow (X;x)$
be the index one cover of $K_X$.
Then $(X';x')$ is again an isolated log canonical singularity with $K_X'$ Cartier.
Note that we have an exact sequence
\[
1\rightarrow 
\pi_1^{\rm loc}(X';x')
\rightarrow 
\pi_1^{\rm loc}(X;x)
\rightarrow 
\zz/k\zz
\rightarrow 1,
\]
so the local fundamental group
of $(X';x')$ is free with at least $3$ generators.
We may replace $(X;x)$ with $(X';x')$ so we can assume that $K_X$ is Cartier.

If $(X;x)$ is klt, then its fundamental group is finite due to Theorem~\ref{introthm:fund-group-klt}.
Hence, we may assume that $(X;x)$ is log canonical, $K_X$ is Cartier, and $\{x\}$ is a log canonical center. 
In this case, the dual complex $\mathcal{D}(X;x)$ is a manifold of dimension at most $2$.

Let $(Y,E)$ be a $\qq$-factorial dlt modification of $(X;x)$.
We let $E_1,\dots,E_r$ be the irreducible components of $E$
and $\gamma_i$ be a loop around the divisor $E_i$.
We have a sequence of surjective homomorphisms and isomorphisms:
\[
\xymatrix{
\pi_1^{\rm loc}(X;x)\simeq 
\pi_1(Y\setminus E)\ar[r]^-{\psi_1} &
\pi_1(Y\setminus Y^{\rm sing}) \ar[r]^-{\psi_2} &
\pi_1(Y)\simeq \pi_1(E) \ar[r]^-{\psi_3}&
\pi_1(\mathcal{D}(E)) 
\simeq 
\pi_1(\mathcal{D}(X;x)).
}
\]
The kernel of $\psi_1$ is the smallest normal group containing $\gamma_i$ for $i\in \{1,\dots,r\}$.
The kernel of $\psi_2$ is generated by torsion elements as $Y$ has isolated klt singularities
and klt singularities have finite regional fundamental groups~\cite[Theorem 1]{Bra21}.
The kernel of $\psi_3$ is the smallest normal group containing the image of $\pi_1(E_i)\rightarrow \pi_1(E)$ for each irreducible component $E_i$. We will proceed in three different cases depending on the coregularity of $(X;x)$.\\

\textit{Case 1:} In this case, we assume that ${\rm coreg}(X;x)=2$.\\

In this case, $E$ only has one component.
Thus, the kernel of $\psi_1$ is 
normally generated by a single loop $\gamma_1$.
We conclude that the group
$\pi_1(Y\setminus Y^{\rm sing})$ is a one-relator group.
So, there is an isomorphism
\[
\pi_1(Y\setminus Y^{\rm sing})
\simeq 
\langle x_1,\dots,x_r \mid s\rangle.
\]
By the Magnus-Karras-Solitar theorem~\cite[Theorem 1]{KMS60},
every torsion element of
$\pi_1(Y\setminus Y^{\rm sing})$
has the form $nrn^{-1}$
where $n\in \pi_1(Y\setminus Y^{\rm sing})$
and $r^k=s$ for some integer $k$.
Since the kernel of $\psi_2$ is torsion,
we conclude that $\pi_1(E)$ is a one-relator group as well.
Observe that in this case, $E$ is a Calabi-Yau surface with canonical singularities.
Hence, its fundamental group is isomorphic to the fundamental group
of a smooth Calabi--Yau surface.
We use the classification of such surfaces, to show that their fundamental groups cannot be a one-relator group with at least $3$ generators.
Indeed, the fundamental group of a smooth Calabi--Yau surface is either:
\begin{itemize}
    \item trivial,
    \item a finite cyclic group, 
    \item the free abelian group $\zz^4$, or
    \item an extension of $\zz^4$ by a finite cyclic or a finite bi-cyclic group.
\end{itemize}
The first two cases have rank at most $2$ and are not free, so they are not one-relators with at least $3$ generators.
By~\cite{New68}, we know that 
an abelian finitely generated subgroup of a one-relator group
has rank at most $2$.
We conclude that the third and fourth cases
cannot be one-relator groups.
This leads to a contradiction.\\

\textit{Case 2:} In this case, we assume that ${\rm coreg}(X;x)=1$.\\

We show that the loops $\gamma_i$ and $\gamma_j$ commute.
We run a $(K_Y+E-E_i)$-MMP over $X$.
This minimal model program terminates 
with a good minimal model $\phi'\colon Y'\rightarrow X$.
Let $E'_i$ be the strict transform 
of $E_i$ on $Y'$.
In the minimal model $Y'$
the divisor $-E'_i$ is nef over $X$.
We claim that ${\phi'}^{-1}(x)=E'_i$ holds set-theoretically.
Indeed, if there is another divisor $E'_j$ in this fiber, then by connectedness we may assume $E'_i\cap E'_j\neq \emptyset$.
Thus, a general ample curve~\footnote{We say that $C$ is a {\em general ample curve} in an $n$-dimensional variety if it is the intersection of $(n-1)$ general effective ample divisors.}  in $E'_j$ will intersect $E'_i$ positively and hence will intersect $-E'_i$ negatively.
We conclude that, at some step of this minimal model program, we lead to a model $Y''$ on which the strict transform of $E_i$ and $E_j$ intersect along a codimension $2$ subvariety.
We write $\phi''\colon Y''\rightarrow X$ for the associated projective morphism.
We let $E''_i$ and $E''_j$ the strict transform of $E_i$ and $E_j$ on $Y''$, respectively.
We let $\gamma''_i$ and $\gamma''_j$ be the loops around $E''_i$ and $E''_j$.
Note that at the generic point of $Z$, the variety $Y''$ has a toric surface singularity.
In particular, the regional fundamental group $\pi_1^{\rm reg}(Y'',\eta_Z)$ is abelian.
We can move homotopically the loops
$\gamma''_i$ and $\gamma''_j$ so that the corresponding circles around $E''_i$ and $E''_i$ are in a small neighborhood of $Z$.
Hence, the loops $\gamma''_i$ and $\gamma''_j$ commute as loops in 
$\pi_1(Y''\setminus {\phi''}^{-1}(x))$.
There is a natural isomorphism
\[
\pi_1(Y\setminus E) \rightarrow 
\pi_1(Y''\setminus {\phi''}^{-1}(x)),
\]
that sends $\gamma_i$ (resp. $\gamma_j$) to $\gamma_i''$ (resp. $\gamma_j''$). 
We conclude that $\gamma_i$ and $\gamma_j$ commute.

Now, we turn to prove that every $\gamma_i$ is a torsion element of $\pi_1(Y\setminus E)$.
Observe that each surface $E_i$ admits a morphism $E_i\rightarrow C$ to an elliptic curve $C$ with general fiber a rational curve.
Let $C_i$ be a general fiber of $E_i\rightarrow C$.
Let $-m_i:=C_i\cdot E_i$.
Then, we have a relation of the form
$\gamma_i^{m_i}=\gamma_{i-1}\gamma_{i+1}$
in the fundamental group
$\pi_1(Y\setminus E)$.
Indeed, this relation holds in the restriction of the normal bundle to $C_i$.
The $r\times r$ matrix $M$ with entries
$M_{i,j}:=E_i\cdot C_j$ is invertible.
This fact and the commutativity of 
the loops $\gamma_i$ implies that there exists $k\gg 0$ such that $\gamma_i^k=1$ for each $i\in \{1,\dots,r\}$.
We are assuming that $\pi_1(Y\setminus E)$ is free. 
Hence, it has no non-trivial torsion elements.
Since the kernel of $\psi_1$ is normally generated by torsion elements, we conclude that $\psi_1$ is an isomorphism.
Since the kernel of $\psi_2$ is torsion, again we conclude that 
$\psi_2$ is an isomorphism.
Hence, we get that $\pi_1(E)\simeq F_r$ for some $r\geq 3$.

Finally, we turn to analyze the kernel of $\psi_3$ in this case.
Note that $\pi_1(\mathcal{D}(E))\simeq \zz$.
By Lemma~\ref{lem:fun-reg-vs-cbf},
we conclude that $\pi_1(E_i)\simeq \pi_1(C)\simeq \zz^2$ for each $i$.
Since $C$ has a lifting to $E_i$ via the isomorphism $C\rightarrow E_i\cap E_{i-1}$, we conclude that the image of every homomorphism
$\pi_1(E_i)\rightarrow \pi_1(E)$ consists of the same two commuting loops in $\pi_1(E)$.
By the previous paragraph, we have that
$\pi_1(E)\simeq F_r$ so these two commuting elements have the form 
$g^s$ and $g^k$ for some $g\in F_r$.
In particular, we obtain a presentation 
$\zz \simeq \langle x_1,x_2,\dots,x_r\mid s\rangle$ with a single relation $s$ and $r\geq 3$. 
This leads to a contradiction.\\

\textit{Case 3:} In this case, we assume that ${\rm coreg}(X;x)=0$.
In particular, $\mathcal{D}(X;x)$ is a closed $2$-manifold.\\

By Lemma~\ref{lem:nice-3-fold-dlt-mod}, we may assume that there exists $E_Y$ supported on $E$ for which $-E_Y$ intersects positively every curve that is movable on some $E_i$.
By Lemma~\ref{lem:fibrations}, we may find a dlt modification $(Y',E')$ satisfying the following.
For each $E_i$, its strict transform $E'_i$ on $E_i$ contains a movable rational $C'_i$ curve contained on its smooth locus that intersects $(E'-E_i')|_{E_i'}$ transversally at two points.
Furthermore, the dlt modification $(Y',E')$ is obtained by consecutively blowing-up $0$-dimensional strata of $(Y,E)$.
We show that for every component $E_i'$ of $E'$ there exists a movable smooth rational curve $C_i'$ satisfying the relation
$\gamma_i^{m_i}=\prod_{j\neq i} \gamma_j^{k_{j,i}}$
holds in $\pi_1(Y'\setminus E')$,
where $-m_i=E'_i\cdot C'_i$
and $k_{j,i}=E'_j\cdot C_i'$ for each $i$.
If $E_i'$ is the strict transform of a component $E_i$ of $E$, then this follows from Lemma~\ref{lem:fibrations}.
Indeed, this relation holds in the normal bundle of $E$ restricted to $C_i'$.
On the other hand, if $E_i'$ is exceptional over $Y$, then 
it is first extracted on the blow-up of a smooth point.
After such a blow-up, the strict transform of $E_i'$ is isomorphic to $\pp^2$, so it suffices to take the strict transform of a general line in this projective space.
As $\psi\colon Y'\rightarrow Y$ is obtained by blowing-up smooth points, there exists an effective divisor
$F'$ supported on ${\rm Ex}(\psi)$
for which $-F'$ is ample over $Y$.
We let $E_{Y'}=\psi^*(E_Y)+\epsilon F$ for $\epsilon$ small enough.
Then, $-E_{Y'}$ intersects positively every curve that is movable on a component of $E_i'$.
We replace $(Y,E)$ with $(Y',E')$
and $E_Y$ with $E_{Y'}$.

By the previous paragraph,
we have a $\qq$-factorial dlt modification $\pi\colon (Y,E)\rightarrow X$
and we know that the following conditions hold:
\begin{itemize}
    \item[(i)] There exists an effective divisor $E_Y$ supported on ${\rm Ex}(\pi)$ such that $-E_Y$ intersects positively every curve that is movable on a component of $E$, 
    \item[(ii)] the loops $\gamma_i$ around the components $E_i$ commute, and 
    \item[(iii)] for each component $E_i$ of $E$, there exists a movable rational curve $C_i$ on $E_i$ such that the relation 
    \begin{equation}\label{eq:relations} 
    \gamma_i^{m_i}=\prod_{j\neq i}\gamma_j^{k_{j,i}}
    \end{equation} 
    holds in $\pi_1(Y\setminus E)$, where $m_i=E_i\cdot C_i$ and 
    $k_{j,i}=E_j\cdot C_i$.
\end{itemize}
The first and third statements are proved in the previous paragraph.
The proof of the second statement
follows from the previous case.\\

\textit{Claim:}
There is no non-trivial divisor $G$
with support contained in ${\rm Ex}(\pi)$
such that $G\cdot C_i=0$ for each $i$.

\begin{proof}[Proof of the Claim]
By means of contradiction,
assume there exists such a divisor $G$.
First, we assume that $G$ has a positive coefficient, i.e.,
${\rm coeff}_{E_i}(G)>0$ for some $E_i$.
Consider the positive real number
\[
\lambda_0:=\max\{ \lambda \mid 
{\rm coeff}_{E_i}(-E_Y + \lambda G)\leq 0
\text{ for each $i\in \{1,\dots,r\}$}
\}.
\]
By construction, we know
that $-E_Y+\lambda_0 G$ supported on ${\rm Ex}(\pi)$ but it is not fully supported
on ${\rm Ex}(\pi)$.
Without loss of generality, we may assume
that ${\rm coeff}_{E_1}(-E_Y+\lambda_0 G)=0$.
Recall that $C_1\cdot E_j\geq 0$ for each $j\geq 2$.
Thus, we have that 
\[
0\geq (-E_Y+\lambda_0 G)\cdot C_1 = 
-E_Y \cdot C_1 >0,
\]
where the equality follows from the fact $G\cdot C_1=0$.
This leads to a contradiction.
We conclude that $G$ has no positive coefficients.

Now, we assume that $G\leq 0$ is non-trivial.
We consider the positive real number
\[
\lambda_0:=\max\{\lambda\mid
{\rm coeff}_{E_i}(-E_Y-\lambda G)\leq 0
\text{ for each $i\in \{1,\dots,r\}$}\}.
\]
By construction, we know that $-E_Y-\lambda_0 G$ is not fully supported on ${\rm Ex}(\pi)$. Without loss of generality, 
we assume 
${\rm coeff}_{E_1}(-E_Y+\lambda_0 G)=0$.
Then, the inequalities
\[
0\geq (-E_Y-\lambda_0 G)\cdot C_1 =
-E_Y\cdot C_1 >0,
\]
hold. This leads to a contradiction.
We conclude that the only divisor with support contained in ${\rm Ex}(\pi)$ that intersects each curve $C_i$ trivially is the trivial divisor.
\end{proof}

The claim implies that the $r\times r$ matrix $M$ with entries $M_{i,j}:=E_i\cdot C_j$ is invertible.
The invertibility of the matrix $M$,
the relations~\eqref{eq:relations},
and the fact that the loops $\gamma_i$ commute in $\pi_1(Y\setminus E)$ imply
that there exists $k\in \zz_{>0}$
such that $\gamma_i^k=1$ for each $i$.

We conclude that the kernel of $\psi_1$ is normally generated by torsion elements.
Since $F_r$ is torsion-free, 
we conclude that $\psi_1$ is an isomorphism.
Analogously, $\psi_2$ is an isomorphism.
Finally, each component $E_i$ of $E$ is simply connected.
Indeed, the minimal resolution of $E_i$
is a smooth rationally connected projective surface.
This implies that $\psi_3$ is also an isomorphism.
Thus, we have that
\[
F_r\simeq \pi_1^{\rm loc}(X;x)\simeq \pi_1(\mathcal{D}(X;x)),
\]
where $r\geq 3$ and $\mathcal{D}(X;x)$ is a closed $2$-dimensional manifold. 
This leads to a contradiction.
\end{proof}

\section{Fourfold log canonical singularities}
\label{sec:4-dim-lc-sing}
In this section, 
we study the fundamental groups
of log canonical $4$-fold singularities.
In order to produce interesting examples of fundamental groups of $4$-dimensional lc singularities, 
we need to exhibit $3$-dimensional smooth polyhedral complexes
with interesting fundamental groups.
To do so, we will consider subcomplexes of the Freudenthal decomposition of $\rr^4$.

\subsection{The Freudenthal decomposition}
\label{subsec:F-decomp}
In this subsection, we introduce the Freudenthal decomposition of $\rr^4$ and prove a property of its dual.

\begin{definition}
{\em 
For a cube $C$ with coordinates $x_j \in [i_j,i_j+1]$ for $j \in \{1,2,3,4\}$, the { \em Freudenthal decomposition} of $C$ is defined in the following way.
For each path $P$ in the edges of $C$ that:\begin{itemize}
    \item starts in $(i_1,i_2,i_3,i_4)$,
    \item ends in  $(i_1+1,i_2+1,i_3+1,i_4+1)$, and
    \item is non-decreasing in each coordinate
\end{itemize}
we take the simplex with the vertices that are in the path $P$ and the cube $C$. The decomposition of $\rr^4$ obtained by taking the Freudenthal decomposition of each unit cube is called the {\em Freudenthal decomposition of $\rr^4$}
or the {\em F-decomposition of $\rr^4$}.
}
\end{definition}

\begin{definition}
{\em 
   For each 4-dimensional simplex $\Delta$ in the $F$-decomposition of $\rr^4$ define $B_\Delta$ to be its barycenter.
    For each simplex $\Delta$ in the F-decomposition,  we define its {\em dual polyhedron} $\Delta^\vee$ to be the convex hull of the vertices $B_{\Delta_i}$, for each $\Delta \subseteq \Delta_i$.
}
\end{definition}

\begin{lemma}
\label{lem:dual-decomp}
Let $\Delta $ be a $k$-dimensional simplex of the $F$-decomposition of $\rr^4$. The dual $\Delta ^\vee$ is a $(4-k)$-dimensional polyhedron whose vertices are exactly the $B_{\Delta_i}$ for each $\Delta \subset \Delta_i$. Moreover, the dual polyhedra tile $\rr^4$.
\end{lemma}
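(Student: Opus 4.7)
The plan is to exploit the $\zz^4\rtimes S_4$-symmetry of the F-decomposition and identify the dual cells $\Delta^\vee$ with faces of permutahedra in the permutahedral honeycomb of $\rr^4$.

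First, I would parametrize: every $k$-simplex of the F-decomposition is a unique $\zz^4$-translate of $\Delta(S_\bullet):=\operatorname{conv}\{\mathbf{1}_{S_0},\ldots,\mathbf{1}_{S_k}\}$ for some strict chain $\emptyset=S_0\subsetneq S_1\subsetneq\cdots\subsetneq S_k\subseteq\{1,2,3,4\}$, where $\mathbf{1}_S\in\{0,1\}^4$ is the characteristic vector of $S$. The $4$-simplices correspond bijectively to pairs $(v,\sigma)\in\zz^4\times S_4$, with barycenter
\[
B_{v,\sigma}=v+\tfrac{1}{5}\bigl(4e_{\sigma(1)}+3e_{\sigma(2)}+2e_{\sigma(3)}+e_{\sigma(4)}\bigr).
\]
A direct check shows that $\Delta(S_\bullet)\subseteq\Delta_{v,\sigma}$ if and only if $v=-\mathbf{1}_U$ for some $U\subseteq\{1,2,3,4\}\setminus S_k$, and the prefix sets of $\sigma$ contain $U\subsetneq U\cup S_1\subsetneq\cdots\subsetneq U\cup S_k$ as a subchain; a straightforward count yields $(5-|S_k|)!\prod_{j=1}^k(|S_j|-|S_{j-1}|)!$ such pairs.

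Second, I would establish the dimension and vertex claims by identifying $\Delta^\vee$ with a face of the permutahedron $\Pi_5$ on five letters. The combinatorial count above matches exactly the number of vertices of a face of $\Pi_5$ whose block sizes are $(5-|S_k|,\,|S_1|,\,|S_2|-|S_1|,\ldots,|S_k|-|S_{k-1}|)$, which has dimension $4-k$. For the claim that each $B_{v,\sigma}$ is a genuine vertex of $\Delta^\vee$, rather than lying in the interior of the convex hull, I would exhibit a linear functional on $\rr^4$ uniquely maximized at $B_{v,\sigma}$, built from strictly monotone coordinate weights reflecting the ordering imposed by $\sigma$ and the sign pattern of $v$.

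Finally, the tiling statement reduces to the well-known fact that $\Pi_5$ tiles $\rr^4$ face-to-face as a lattice honeycomb. Each vertex $v_0\in\zz^4$ gives rise to a top-dimensional cell $\{v_0\}^\vee$ isomorphic to $\Pi_5$, and the lower-dimensional duals $\Delta^\vee$ appear precisely as the common faces shared between adjacent such cells, giving a face-to-face partition of $\rr^4$. The main obstacle lies in the second step: verifying that duals constructed using only top-dimensional barycenters have the expected vertex set and dimension, which requires the combinatorial identification with faces of $\Pi_5$ and a careful check that the monotone-weight functionals indeed single out each barycenter as an extreme point.
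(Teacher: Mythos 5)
Your route is genuinely different from the paper's. The paper proves the lemma by brute force: it uses the invariance of the F-decomposition under translations, coordinate permutations, and central symmetry to reduce to a short list of representative simplices $\Delta$ in each dimension $k=2,1,0$, then explicitly lists the barycenters $B_{\Delta_i}$ and exhibits the hyperplanes containing them, checking directly that their affine span has dimension exactly $4-k$ (the cases $k=3,4$ being trivial). You instead parametrize the $k$-simplices by chains $\emptyset=S_0\subsetneq\cdots\subsetneq S_k\subseteq\{1,2,3,4\}$ up to translation, characterize which $4$-simplices $\Delta_{v,\sigma}$ contain a given $\Delta(S_\bullet)$, and identify $\Delta^\vee$ with the face of the permutohedron $\Pi_5$ indexed by the corresponding ordered set partition; your containment criterion ($v=-\mathbf{1}_U$ with $U\subseteq S_k^c$ and $U\subsetneq U\cup S_1\subsetneq\cdots\subsetneq U\cup S_k$ a subchain of prefix sets) and the resulting count $(5-|S_k|)!\prod_j(|S_j|-|S_{j-1}|)!$ are both correct. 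Your approach is more conceptual and buys more than the paper's proof actually delivers: it explains \emph{why} the maximal dual cells are truncated octahedra and hexagonal prisms (faces of $\Pi_5$), it addresses the ``vertices are exactly the $B_{\Delta_i}$'' clause (which the paper's non-collinearity/non-coplanarity checks do not establish), and it gives the tiling statement, which the paper leaves implicit. The paper's computation, on the other hand, is self-contained and requires no input about permutohedra.

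One step of your sketch needs strengthening. Matching the number of barycenters with the number of vertices of a face of $\Pi_5$, even combined with separating linear functionals showing each barycenter is extreme, does not yet determine the dimension of $\Delta^\vee$: finitely many points can be in convex position while spanning a lower-dimensional affine subspace. To get $\dim\Delta^\vee=4-k$ you should exhibit an actual affine isomorphism from the face of $\Pi_5$ (sitting in the hyperplane $\sum y_i=15$ of $\rr^5$) onto $\operatorname{conv}\{B_{\Delta_i}\}$ --- for instance via the explicit formula $B_{v,\sigma}=v+\tfrac{1}{5}\sum_i(5-i)e_{\sigma(i)}$, which realizes the vertex map --- or else compute the affine hull directly, as the paper does. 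Once that identification is in place, the separating functionals and the face-to-face tiling both follow from the corresponding standard facts for the permutohedral honeycomb.
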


\begin{proof}
   For a three or four-dimensional $\Delta$, the dual $\Delta ^ \vee$ is the convex hull of two or one vertices, respectively. In those cases, there is nothing to prove.
   
   For a 2-dimensional $\Delta$, we need to prove that the points $B_{\Delta_i}$ are all contained in two different hyperplanes and are not collinear.
   Since the Freudenthal decomposition is invariant under translation, permutation of coordinates, and symmetry with respect to the origin, we only need to check for $\Delta$ being   one of the following triangles:
   \begin{enumerate}
    \item Triangle with vertices $(0,0,0,0)$, $(1,0,0,0)$, and $(1,1,0,0)$.
    Let $L_1$ be the edge joining $(0,0,0,0)$ and $(1,0,0,0)$.
    We enumerate the possible $4$-dimensional simplices $S$ containing this triangle and compute the coordinates of their barycenters. 
    \begin{enumerate}
        \item Assume $L_1$ is the first line in the path that defines the simplex $S$. Then, the simplex $S$ has vertices with coordinates $(0,0,0,0)$, $(1,0,0,0)$, $(1,1,0,0)$, $(1,1,1,0)$, and $(1,1,1,1)$, up to a permutation of the coordinates $x_3$ and $x_4$.
        \item Assume $L_1$ is the second line in the path that defines the simplex $S$. Then, the simplex $S$ has vertices with coordinates $(0,0,0,-1)$, $(0,0,0,0)$, $(1,0,0,0)$, $(1,1,0,0)$, and $(1,1,1,0)$, up to a permutation of the coordinates $x_3$ and $x_4$.
        \item Assume $L$ is the third line in the path that defines the simplex $S$. Then, the simplex $S$ has vertices with coordinates $(0,0,-1,-1)$, $(0,0,0,-1)$, $(0,0,0,0)$, $(1,0,0,0)$, and $(1,1,0,0)$, up to a permutation of the coordinates $x_3$ and $x_4$.
    \end{enumerate}
    Hence, the barycenters lie in the hyperplanes $\{(x_1,x_2,x_3,x_4)\mid 3x_1+3x_2-2x_32x_4=3\}$ and $\{(x_1,x_2,x_3,x_4)\mid 8x_1-2x_2-2x_3-2x_4=4\}$. The barycenters; $\frac{1}{5}(4,3,2,1)$, $\frac{1}{5}(4,3,1,2)$, and $\frac{1}{5}(3,2,1,-1)$ are not collinear.
    
    \item Triangle with vertices $(0,0,0,0)$,
    $(1,0,0,0)$, and $(1,1,1,0)$.
    The barycenters of the $4$-dimensional simplices containing this triangle are the following:
    \begin{enumerate}
        \item Assume $L_1$ is the first line in the path that defines the simplex $S$. Then, the simplex $S$ has vertices with coordinates $(0,0,0,0)$, $(1,0,0,0)$, $(1,1,0,0)$, $(1,1,1,0)$, and $(1,1,1,1)$, up to a permutation of the coordinates $x_2$ and $x_3$.
        \item 
        Assume $L_1$ is the second line in the path that defines the simplex $S$. Then, the simplex $S$ has vertices with coordinates $(0,0,0,-1)$, $(0,0,0,0)$, $(1,0,0,0)$, $(1,1,0,0)$, $(1,1,1,0)$, up to a permutation of the coordinates $x_2$ and $x_3$.
    \end{enumerate}
    Hence, the barycenters lie in the hyperplanes $\{(x_1,x_2,x_3,x_4)\mid x_2+x_3-x_4=\frac{4}{5}\}$ and $\{(x_1,x_2,x_3,x_4)\mid 2x_1-x_2-x_3=\frac{3}{5}\}$. The barycenters; $\frac{1}{5}(4,3,2,1)$, $\frac{1}{5}(4,2,3,1)$, and $\frac{1}{5}(3,2,1,-1)$ are not collinear.
    
    \item Triangle with vertices $(0,0,0,0)$, $(1,0,0,0)$, and $(1,1,1,1)$.
    The simplices that contain this triangle have  vertices with coordinates $(0,0,0,0)$, $(1,0,0,0)$, $(1,1,0,0)$, $(1,1,1,0)$, and $(1,1,1,1)$, up to a permutation of the coordinates $(x_2,x_3,x_4)$. Thus, the barycenters are contained in the hyperplanes $\{(x_1,x_2,x_3,x_4)\mid x_1+x_2+x_3+x_4=2\}$ and $\{5x_1=4\}$, but are not collinear since they contain the points $\frac{1}{5}(4,3,2,1)$, $\frac{1}{5}(4,2,3,1)$, and $\frac{1}{5}(4,1,2,3)$.
    
    \item Triangle with vertices $(0,0,0,0)$, $(1,1,0,0)$, and $(1,1,1,1)$.
    The simplices that contain this triangle have vertices with coordinates $(0,0,0,0)$, $(1,0,0,0)$, $(1,1,0,0)$, $(1,1,1,0)$, and $(1,1,1,1)$ up to a permutation of the coordinates $x_1$ with $x_2$ and $x_3$ with $x_4$ .
    Hence, the barycenters are contained in the hyperplanes $\{(x_1,x_2,x_3,x_4)\mid x_1+x_2+x_3+x_4=2\}$ and $\{(x_1,x_2,x_3,x_4)\mid 5x_1+5x_2=7\}$, but are not collinear since they contain the points $\frac{1}{5}(4,3,2,1)$, $\frac{1}{5}(3,4,2,1)$, and $\frac{1}{5}(4,3,1,2)$.     
   \end{enumerate}
   
   For a 1-dimensional simplex $\Delta$, we need to prove that the points 
   $B_{\Delta_i}$ lie in the same 3-dimensional space and are not coplanar, i.e. they do not lie in exactly one hyperplane.
   Since the Freudenthal decomposition is invariant under translation and permutation of coordinates, we only need to check for $\Delta$ being one of the following lines
   \begin{enumerate}
    \item The line $L_1$. We describe the simplices $S$ containing $L_1$.
    \begin{enumerate}
        \item Assume $L_1$ is the first line in the path that defines the simplex $S$. Then, the simplex has vertices with coordinates $(0,0,0,0)$, $(1,0,0,0)$, $(1,1,0,0)$, $(1,1,1,0)$, and $(1,1,1,1)$, up to a permutation of the coordinates $(x_2,x_3,x_4)$.
        \item Assume $L_1$ is the second line in the path that defines the simplex $S$. Then, the simplex has vertices with coordinates $(0,0,0,-1)$, $(0,0,0,0)$, $(1,0,0,0)$, $(1,1,0,0)$, and $(1,1,1,0)$, up to a permutation of the coordinates $(x_2,x_3,x_4)$.
        \item Assume $L_1$ is the third line in the path that defines the simplex $S$. Then, the simplex has vertices with coordinates $(0,0,-1,-1)$, $(0,0,0,-1)$, $(0,0,0,0)$, $(1,0,0,0)$, and $(1,1,0,0)$, up to a permutation of the coordinates $(x_2,x_3,x_4)$.
        \item Assume $L_1$ is the last line in the path that defines the simplex $S$. Then, the simplex has vertices with coordinates $(0,-1,-1,-1)$, $(0,0,-1,-1)$, $(0,0,0,-1)$, $(0,0,0,0)$, and $(1,0,0,0)$ up to a permutation of the coordinates $(x_2,x_3,x_4)$.
    \end{enumerate}
    
    In each case, all the barycenters lie only in the hyperplane $\{(x_1,x_2,x_3,x_4)\mid 4x_1-x_2-x_3-x_4=2\}$.
    
    \item The line $L_2$ between the origin and  $(1,1,0,0)$.
    Now we will enumerate the simplices $S$ containing $L_2$.
    \begin{enumerate}
        \item  Assume $L_2$ connects the first and third vertices in the path that defines the simplex $S$. Then, the simplex has vertices with coordinates $(0,0,0,0)$, $(1,0,0,0)$, $(1,1,0,0)$, $(1,1,1,0)$, and $(1,1,1,1)$, up to a permutation of the coordinates $x_1$ with $x_2$ and the coordinates $x_3$ with $x_4$.  
        \item Assume $L_2$ connects the second and fourth vertices in the path that defines the simplex $S$. Then, the simplex has vertices with coordinates $(0,0,0,-1)$, $(0,0,0,0)$, $(1,0,0,0)$, $(1,1,0,0)$, and $(1,1,1,0)$, up to a permutation of the coordinates $x_1$ with $x_2$ and the coordinates $x_3$ with $x_4$.  
    
        \item Assume $L_2$ connects the third and fifth vertices in the path that defines the simplex $S$. Then, the simplex has vertices with coordinates $(0,0,-1,-1)$, $(0,0,0,-1)$, $(0,0,0,0)$, $(1,0,0,0)$, and $(1,1,0,0)$ up to a permutation of the coordinates $x_1$ with $x_2$ and the coordinates $x_3$ with $x_4$.
    \end{enumerate}
    
    In each case, all the barycenters lie in the hyperplane $\{(x_1,x_2,x_3,x_4)\mid 3x_1+3x_2-2x_3-2x_4=3\}$
    
    \item The line $L_3$ between the origin and  $(1,1,1,0)$
    We describe the simplices $S$ containing $L_3$.
    \begin{enumerate}
        \item Assume $L_3$ connects the first and fourth  vertices in the path that defines the simplex $S$. Then, the simplex has vertices with coordinates $(0,0,0,0)$, $(1,0,0,0)$, $(1,1,0,0)$, $(1,1,1,0)$, and $(1,1,1,1)$, up to a permutation of the coordinates $x_1,x_2$ and $x_3$.  
    
        \item Assume $L_3$ connects the second and fifth vertices in the path that defines the simplex $S$. Then, the simplex has vertices with coordinates $(0,0,0,-1)$, $(0,0,0,0)$, $(1,0,0,0)$, $(1,1,0,0)$, and $(1,1,1,0)$ up to a permutation of the coordinates $x_1,x_2$ and $x_3$.
    \end{enumerate}
    
    The only hyperplane containing the barycenters of these simplices is $\{(x_1,x_2,x_3,x_4)\mid 2x_1+2x_2+2x_3-3x_4=3\}$.
    
    \item The line $L_4$ between the origin and  $(1,1,1,1)$
    All the simplices containing $L_4$ must have coordinates $(0,0,0,0)$, $(1,0,0,0)$, $(1,1,0,0)$, $(1,1,1,0)$, and $(1,1,1,1)$, up to a permutation of the coordinates. Therefore, the barycenters lie in the hyperplane $\{(x_1,x_2,x_3,x_4)\mid x_1+x_2+x_3+x_4=2\}$
    and this is the only hyperplane that contains the barycenters.
   \end{enumerate}
   
    For $\Delta$ a point, we need to prove that the points $B_{\Delta_i}$ do not lie in the same 3-dimensional space.
    Without loss of generality, we can assume that $\Delta$ is the origin. 
    The 4-dimensional simplices in the Freudenthal decomposition of the cube with diagonal formed by the points $(0,0,0,0)$ and $(1,1,1,1)$ have barycenters whose coordinates are any permutation of $\{\frac{1}{5},\frac{2}{5},\frac{3}{5},\frac{4}{5}\}$. The only hyperplane containing these points is $\{(x_1,x_2,x_3,x_4)\mid x_1+x_2+x_3+x_4=2\}$. However, the barycenter of the simplex with vertices
    $(-1,-1,-1,-1)$, $(0,-1,-1,-1)$, $(0,0,-1,-1)$, $(0,0,0,-1)$, and $(0,0,0,0)$ is not contained in the hyperplane  $\{(x_1,x_2,x_3,x_4)\mid x_1+x_2+x_3+x_4=2\}$. Thus, $\Delta^\vee$ is 4-dimensional.

\end{proof}

\begin{definition}
{\em 
Let $\mathcal{F}$ be the F-decomposition of $\rr^4$.
By Lemma~\ref{lem:dual-decomp}, 
the set $\{ \Delta^\vee \mid \Delta \in \mathcal{F} \}$
gives a convex polyhedral decomposition of $\rr^4$.
This decomposition will be called
the {\em Freudenthal dual decomposition of $\rr^4$}
or simply the {\em F-dual decomposition}.
Note that both the F-decomposition
and the F-dual decomposition
can be regarded as complexes. For the $F$-decomposition, we can use the lattice $\zz^4$, while for the $F-$dual decomposition, we can use the lattice $(\frac{1}{5}\zz)^4$.
}
\end{definition} 

\subsection{Subcomplexes of the F-decomposition}
\label{subsec:subcomp-F-decomp}
In this subsection, we prove the following proposition regarding subcomplexes of the F-decomposition.

\begin{definition}
{\em 
Let $M$ be a subcomplex of the Freudenthal decomposition in $\rr^4$.
We define the {\em dual} $M^\vee$ of $M$ in the following way:
For each vertex $v\in M$ 
the dual polyhedron $v^\vee$ is in $M^\vee$. 
We write $\partial M^\vee$ for the boundary of the dual.
}
\end{definition}

\begin{proposition}\label{prop:smoothing}
Let $M$ be a subcomplex of the F-dual decomposition in $\rr^4$.
Let $M_0$ be the boundary of the dual of $M$.
Then, there exists a $3$-dimensional smooth polyhedral complex $M'$ that is homotopy
equivalent to $M_0$.
\end{proposition}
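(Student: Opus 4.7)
The plan is to start from $M_0 = \partial M^\vee$ viewed as a $3$-dimensional polyhedral complex and iteratively smoothen it by blowing up non-smooth strata, mimicking resolution of singularities. First I would use Lemma~\ref{lem:dual-decomp} to give an explicit combinatorial description of $M_0$. The top-dimensional cells are the $3$-faces $e^\vee$ where $e$ is an edge of the F-decomposition with exactly one endpoint in $M$. By the symmetries of the F-decomposition (invariance under the hyperoctahedral group $B_4$ acting on $\rr^4$, plus translations), these cells, together with their lower-dimensional faces and vertices, fall into finitely many combinatorial isomorphism classes, each of which can be computed explicitly from the enumeration of simplices carried out in the proof of Lemma~\ref{lem:dual-decomp}.

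Next I would classify the vertices of $M_0$ according to their neighborhood fan $\mathcal{C}(M_0,v)\subset \qq^3$ and, for each combinatorial type, verify that this fan is a strongly polytopal fan. Once that is established, Lemma~\ref{lem:blow-up-dim-3} applies and gives a blow-up of $2M_0$ at $v$. After performing these vertex blow-ups, Lemma~\ref{lem:nerve-blowup} controls how the nerves of neighboring cells change: each $R_P$ acquires a nerve which is the pyramid over the old nerve of $R$, and in particular the combinatorially smooth condition propagates to the new complex. One then iterates on the remaining non-smooth edges and faces, at each stage rescaling the lattice as needed so that the relevant hyperplanes separating strata carry lattice coordinates. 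Termination is ensured because each blow-up strictly decreases the (finite) set of non-smooth strata, and the resulting top-dimensional cells turn out to be the four polyhedra of Theorem~\ref{introthm:3-manifold}: quadrilateral prisms, associahedra, and partial edge truncations of partial vertex truncations of hexagonal prisms or permutahedra.

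The main obstacle will be the case-by-case verification that every vertex (and higher-dimensional) neighborhood in $M_0$ admits a strongly polytopal fan embedding in $\qq^3$, since by Remark~\ref{rem:blow-up-existence} this is not automatic; in borderline cases one may have to pass to $2M_0$ or a further rescaling before a lattice-compatible blow-up exists. Reducing the problem to the finite list of $B_4$-orbits, however, makes this a bounded check. Finally, homotopy equivalence $M'\simeq M_0$ is preserved at each step: a combinatorial blow-up at a stratum $P$ replaces $P$ by a polytope $P'$ whose link in the new complex is the cone over the link of $P$ in the old complex, so the underlying topological space deformation retracts back onto the previous one.
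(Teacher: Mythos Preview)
Your plan matches the paper's proof closely: classify the cells and nerves of $M_0$ (Step~1), verify the strongly polytopal condition at each non-smooth vertex and blow up via Lemma~\ref{lem:blow-up-dim-3} (Steps~2--3), then handle the non-smooth edges (Step~4), with homotopy equivalence following since blow-ups are simple homotopy equivalences (Step~5). Two points to sharpen.

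First, the F-decomposition is not invariant under the full hyperoctahedral group $B_4$: it is preserved only by integer translations, permutations of the coordinates ($S_4$), and the central symmetry $x\mapsto -x$, not by individual sign changes. This still gives finitely many orbits, so the reduction works, but you should use the correct symmetry group when doing the case check.

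Second, and more substantively, Lemma~\ref{lem:blow-up-dim-3} only constructs blow-ups at \emph{vertices}; it says nothing about edges, and your phrase ``one then iterates on the remaining non-smooth edges and faces'' hides a genuine step. In the paper the non-smooth edges are exactly those with quadrilateral nerves, and both endpoints of such an edge are non-smooth vertices (triangular-prism nerve). The edge blow-up is made to exist by first performing all vertex blow-ups \emph{compatibly}: one fixes a single embedding, piecewise-linear lift, and hyperplane for each isomorphism class of vertex neighborhood, so that the two cuts at the endpoints of a bad edge are congruent quadrilaterals. The replacing polytope for the edge can then be taken to be the quadrilateral prism over either cut. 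Without this compatibility choice the edge blow-up need not exist as a lattice polytope (cf.\ Remark~\ref{rem:blow-up-existence}), so a bare termination argument based on ``the number of non-smooth strata decreases'' is not sufficient. No blow-ups of $2$-cells are needed, incidentally: all $2$-cell nerves in $M_0$ are already simplices.
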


\begin{proof}
We will prove the statement in several steps.
First, we will study the polyhedral decomposition of $M_0$.
Then, we will proceed by performing blow-ups
on $M_0$ in order to obtain a smooth polyhedral complex.
Recall from Remark~\ref{rem:faces-blow-up} that blow-ups may not exist in the general setting.
Hence, we will need to carefully analyze the nerves at each simplex to argue that the blow-up does exist.\\

\textit{Step 1:} In this step, we study the polyhedra in $M_0$ and its nerves. We show the following statements:
\begin{enumerate}
    \item [(i)] The maximal polyhedra of $M_0$ are truncated octahedrons and hexagonal prisms.
    \item[(ii)] The nerves of the vertices of $M_0$ are $3$-dimensional simplices
    or triangular prisms.
    \item[(iii)] The nerves of the edges of $M_0$ are triangles or quadrilaterals.\\
\end{enumerate}

The maximal polyhedra in $M_0$ are 3-dimensional polyhedra of the dual Freudenthal decomposition of $\rr^4$. Thus, they are truncated octahedra or hexagonal prisms. This shows (i).\\

   A 3-dimensional polyhedron of the dual Freudenthal decomposition is in $M_0$ if and only if it is a face of a 4-dimensional polyhedron not in $M^\vee$ and the face of a 4-dimensional polyhedron in $M^\vee$.
   Therefore, a 3-dimensional polyhedron is in $M_0$ if its dual edge has one vertex in $M$ and one vertex not in $M$.
   For a vertex $v$ in $M_0$, its dual in the Freudenthal decomposition is a 4-dimensional simplex $\Delta$. The maximal polyhedra of $M_0$ containing $v$ correspond to the edges of $\Delta$ that have one vertex in $M$ and the other vertex not in $M$. Two of these maximal polyhedra intersect in a $1$ or $2$-dimensional cell if the corresponding edges share a $3$ or  $2$-dimensional cell in $\Delta$, respectively. The vertices in the nerve of $v$ are connected by an edge if the corresponding maximal polyhedra of $M_0$ share a $2$-dimensional cell. This is the same as asking for the corresponding edges in the Freudenthal decomposition to be in the same $2$-dimensional cell.
   Since $\Delta$ has 5 vertices there are combinatorially 3 possibilities:
   \begin{enumerate}
       \item The simplex $\Delta$ contains 0 or 5 vertices of $M$. Then no maximal polyhedron of $M_0$ contains $p$. So, the point $v$ is not in $M_0$.
       \item The simplex $\Delta$ contains 1 or 4 vertices of $M$. Then exactly 4 edges of $\Delta$ have one vertex in $M$ and one vertex outside of $M$. Since all these edges in $\Delta$ share a common vertex, each pair is contained in a common triangle. Therefore, in the nerve of $v$ in $M_0$ all the points are joined by an edge, hence it is a 3-dimensional simplex. 
       \item The simplex $\Delta$ contains 2 or 3 points of $M$. Then exactly 6 edges of $\Delta$ have one vertex in 
       $M$ and one vertex outside of $M$. Call $A,B$, $1,2,3$ the points of $\Delta$. Where $A$ and $B$ are both inside or outside of $M$ and the same holds for $1,2$, and $3$. The edge formed by points $\{1,A\}$ shares a triangle only with $\{1,B\}$, $\{2,A\}$ and $\{3,A\}$. We can proceed similarly for each other of the 6 edges. Hence, the nerve of $v$ in $M_0$ is a triangular prism. 
   \end{enumerate}
This finishes the proof of (ii).\\

   For an edge $E$ in $M_0$ its dual in the Freudenthal decomposition is a 3-dimensional simplex $\Delta$. The maximal polyhedra of $M_0$ containing $E$ correspond to the edges of $\Delta$ that have one vertex in $M$ and the other vertex not in $M$. Two of these maximal polyhedra intersect in a $2$-dimensional cell if the corresponding edges share a  $2$-dimensional cell in $\Delta$. Hence, the vertices in the nerve of $E$ are connected by an edge if the corresponding edges in the Freudenthal decomposition are in the same $2$-dimensional cell.

    Since $\Delta$ has 4 vertices there are combinatorially 3 possibilities:

    \begin{enumerate}
        \item The simplex $\Delta$ contains 0 or 4 vertices of $M$. Then, no maximal polyhedron of $M_0$ contains $v$. So, the vertex $v$ is not in $M_0$.
        \item The simplex $\Delta$ contains 1 or 3 vertices of $M$. Hence, exactly 3 edges of $\Delta$ have one vertex in $M$ and one vertex outside of $M$. Therefore, the nerve of $E$ in $M_0$ is a 2-dimensional simplex.
        \item
        The simplex $\Delta$ contains 2 vertices of $M$. Hence, exactly 4 edges of $\Delta$ have one vertex in $M$ and one vertex outside of $M$. Therefore, the nerve of $E$ in $M_0$ is a quadrilateral.
    \end{enumerate}
This finishes the proof of (iii).\\

\textit{Step 2:} In this step, we show that the neighborhood of any non-smooth vertices of $M_0$ admits an embedding in $\qq^3$ as a strongly polytopal fan.\\

A vertex $v$ in $M_0$ corresponds to a simplex $\Delta$ in the Freudenthal decomposition that has $1,2,3$ or $4$ vertices in $M$. If it corresponds to a non-smooth vertex, then it must have $2$ or $3$ vertices in $M$, without loss of generality, we will assume that two vertices are in $M$.
    Up to a permutation of coordinates, we can assume that the simplex has vertices $(0,0,0,0)$, $(1,0,0,0)$, $(1,1,0,0)$, $(1,1,1,0)$, and $(1,1,1,1)$.

    The vertex $v$ in $M_0$, then has coordinates $\frac{1}{5}(4,3,2,1)$.
    The vertices of $M_0$ that have edges connecting to $v$ will correspond to the barycenters of $4$-dimensional simplices sharing a $3$-dimensional simplex with $\Delta$. Hence, they will be one of the following: 
\[
\frac{1}{5}(3,2,1,-1), \frac{1}{5}(4,3,1,2),  \frac{1}{5}(4,2,3,1),  \frac{1}{5}(3,4,2,1), \text{ and }\frac{1}{5}(6,4,3,2).
\]
Therefore, after translating to the origin the lattice generators of the edges in $M_0$ with vertex $v$ will be:
\[ 
A:=\frac{1}{5}(-1,-1,-1,-2),  B:=\frac{1}{5}(0,0,-1,1), C:=\frac{1}{5}(0,-1,1,0),  D:=\frac{1}{5}(-1,1,0,0), \text{ and } E:=\frac{1}{5}(2,1,1,1).
\]
As all the cones are smooth, defining linear maps for each of the cones to a common $\zz^4$ is the same as defining an
element in $\qq^3$ for each lattice ray generator in $\zz^4$. The edges of $\Delta$ that have one vertex in $M$ and one vertex not in $M$ correspond to the $3$-dimensional polytopes in $M_0$. Up to symmetry with respect to the origin, there are $6$ possibilities for the 2 vertices in $M$.
    \begin{enumerate}
        \item The vertices are $(0,0,0,0)$ and $(1,0,0,0)$.
        Hence, the cones in $M_0$,  will have generators: $BDE$, $BCE$, $BCD$, $ADE$, $ACE$, and $ACD$. In this case, we define the map as mapping the vertices $A$, $B$, $C$, $D$, and $E$ to $(0,0,1)$, $(0,0,-1)$, $(1,0,0)$, $(0,1,0)$, and $(-1,-1,0)$, respectively.
        \item The vertices are $(0,0,0,0)$ and $(1,1,0,0)$.
        So, the cones in $M_0$ have generators: $CDE$, $BCE$, $BCD$, $ADE$, $ABE$. and $ABD$. We can define the map as sending the vertices $A$, $B$, $C$, $D$, and $E$ to $(0,0,1)$, $(1,0,0)$, $(0,0,-1)$, $(0,1,0)$, and $(-1,-1,0)$, respectively.
        \item The vertices are $(0,0,0,0)$ and $(1,1,1,0)$.
        The cones in $M_0$ have generators: $CDE$, $BDE$, $BCD$, $ACE$, $ABE$ and $ABC$. We consider the map sending the vertices $A$, $B$, $C$, $D$, and $E$ to $(0,0,1)$, $(1,0,0)$, $(0,1,0)$, $(0,0,-1)$, and $(-1,-1,0)$, respectively.     
        \item The vertices are $(0,0,0,0)$ and $(1,1,1,1)$.
        Hence, the cones in $M_0$,  will have generators: $CDE$, $BDE$, $BCE$, $ACD$, $ABD$, and $ABC$. 
        We define the map as sending the vertices $A$, $B$, $C$, $D$ and $E$ to $(0,0,1)$, $(1,0,0)$, $(0,1,0)$, $(-1,-1,0)$, and $(0,0,-1)$, respectively.
        \item The vertices are $(1,0,0,0)$ and $(1,1,0,0)$.
        The cones in $M_0$ have generators: $CDE$, $ACE$, $ACD$, $BDE$, $ABE$, and $ABD$. 
        We consider the morphism that maps the vertices $A$, $B$, $C$, $D$, and $E$ to $(1,0,0)$, $(0,0,1)$, $(0,0,-1)$, $(0,1,0)$, and $(-1,-1,0)$, respectively.
        \item The vertices are $(1,0,0,0)$ and $(1,1,1,0)$.
        Hence, the cones in $M_0$ will have generators: $DCE$, $ADE$, $ACD$, $BCE$, $ABE$ and $ABC$. 
        We define the map as sending the vertices $A$, $B$, $C$, $D$, and $E$ to $(1,0,0)$, $(0,0,1)$, $(0,1,0)$, $(0,0,-1)$, and $(-1,-1,0)$, respectively.
\end{enumerate}
In any case, we have lattice maps that send each of the cones in $\mathcal{P}$ with vertex $v$ to a common fan in $\zz^3$. As the vertices $(0,0,1)$, $(0,0,-1)$, $(1,0,0)$, $(0,1,0)$, and $(-1,-1,0)$ form a convex bipyramid, the fan in $\zz^3$ is strongly polytopal.\\

\textit{Step 3:} In this step, we produce a polyhedral complex $M_1$ obtained from $M_0$ by a sequence of blow-ups at points.\\

By Lemma~\ref{lem:blow-up-dim-3} and Step 2, we can perform blow-ups at each non-smooth point of $2M$. 
The construction of the blow-up in the proof of Lemma~\ref{lem:blow-up-dim-3} depends on the choice of:
an embedding into $\mathbb{Q}^3$, a piecewise linear function $\mathbb{Q}^3 \rightarrow \mathbb{Q}^4$, and a hyperplane $H$ in $\mathbb{Q}^4$.
By the second step, 
we can choose the same embedding, piecewise linear function, and hyperplane
for every two vertices in $M_0$ with isomorphic neighborhoods.
By doing so, we produce a polyhedral complex $M_1$ where all the non-smooth edges are disjoint.\\

\textit{Step 4:} In this step, we blow-up at the edges of $M_1$ to obtain a smooth complex.
\\

To do so, we show that the blow-up of $M_1$ at any edge with nerve a quadrilateral exists.

In $M_0$ the only nerves of edges that are not 2-dimensional simplices are quadrilaterals. The complex $M_1$ is obtained by performing blow-ups at vertices of $M_0$ and all the nerves at 2-dimensional cells are simplices. Therefore, by Lemma~\ref{lem:nerve-blowup}, all the new edges in $M_1$ have simplices as nerves. 
Hence, by Step 1, the only edges in $M_1$, without simplices as nerves were edges in $M_0$ without simplices as nerves. Such edge had non-simplicial nerves in both vertices, hence there were blow-ups performed in $M_0$ at both of these vertices.
    
Let $L$ be one such edge, with vertices $A$ and $B$. In $M_1$ this edge is contained in four 3-dimensional simplices. The points $A$ and $B$ are contained in these four 3-dimensional simplices and also in polyhedrons $P_A$ and $P_B$, respectively.
The polyhedrons $P_A$ and $P_B$ are the result of the blow-ups at points $A'$ and $B'$ in $M_0$.

The cone at $P_A$ with vertex $A$ and  the cone at $P_B$ with vertex $B$ are isomorphic by the choice of blow-ups in Step 2.
We take a hyperplane $H_A$ in $P_A$ separating $A$ and the other vertices in $P_A$. We define a hyperplane $H_B$ in $P_B$ by the same equations that define  $H_A$ in $P_A$.
The intersection points of $H_A \cap P_A$ and $H_B \cap P_B$ define four points in each of the four 3-dimensional polyhedra that contain $L$. In each of these 3-dimensional polyhedra, the 2 points coming from $R_A$ are joined to the 2 points coming from $R_B$ with lines parallel to $L$. Therefore, these four points define a parallelogram. 
Therefore, we define the blow-up at $L$ by cutting with the hyperplanes defined by the parallelograms.
The edge $L$ is replaced with the quadrilateral prism with faces given by the parallelograms in the 3-dimensional polyhedra containing $L$ and the quadrilaterals in $H_A$ and $H_B$. This is the prism over the quadrilateral $H_A\cap P_A \cong H_B \cap P_B$.     

In $M_1$ the only strata with non-simplicial nerves are disjoint edges and the points in these edges.
Hence, after blowing-up these edges, by Lemma~\ref{lem:nerve-blowup}, all the nerves are simplicial. Therefore, after performing the aforementioned blow-up at each non-smooth edge, we end up with $M_2$ a smooth polyhedral complex.\\

\textit{Step 5:} In this step, we define $M':=M_2$ and finish the proof.\\

Observe that $M'$ is a smooth polyhedral complex of dimension $3$ by construction.
Furthermore, since blow-ups are simply homotopy equivalences, 
we conclude that $M'$ is homotopic to $M_0$.
This finishes the proof.
\end{proof} 

\subsection{Free groups in dimension 4}
\label{subsec:free-in-dim-4}
In this subsection, we show that every free group appears as the fundamental group
of a log canonical $4$-dimensional singularity.

\begin{proof}[Proof of Theorem~\ref{introthm:free-in-4}]
In view of Theorem~\ref{introthm:from-sm-poly-comp-to-lc-sing}
it suffices to prove the existence of a smooth $3$-dimensional polyhedral complex $\mathcal{P}_r$
for which $\pi_1(\mathcal{P}_r)=\pi_1(M)$.
As $M$ is compact and smooth in $\rr^4$, there exists large enough $N$, such that the union of the lattice cubes of size $\frac{1}{N}$ that intersect $M$ gives a set homotopic to a tubular neighborhood of $M$.
Equivalently dilating $M$ by a factor of $N$ and taking all the unit cubes intersecting $M_{N}$ gives us a subcomplex of the $F$-decomposition of $\rr^4$ homotopic to a tubular neighborhood of $M$. Call $M'$ this subcomplex of the $F$-decomposition.
The boundary of the dual complex of $M'$ in the F-dual decomposition of $\mathbb{R}^4$ is two disjoint complexes, homotopic to $M$. Let $M_0$ be one of these subcomplexes.

By Proposition~\ref{prop:smoothing}
there exists a smooth $3$-dimensional
polyhedral complex $\mathcal{P}_r$
for which $\pi_1(\mathcal{P}_r)=\pi_1(M)$.
This finishes the proof.
\end{proof}

\begin{proof}[Proof of Theorem~\ref{introthm:snc-CY-dim-3}]
This follows from the proof of Theorem~\ref{introthm:free-in-4}.
\end{proof}

\begin{proof}[Proof of Theorem~\ref{introthm:3-manifold}]
The 3-manifold $M_0$ in the proof of
Theorem~\ref{introthm:free-in-4}
is homotopic to $\#^r(S^2\times S^1)$.
Then, the statement follows from 
the proof of Proposition~\ref{prop:smoothing}.
\end{proof}

\section{Examples and questions} 
\label{sec:e-and-q}

In this section, we collect some examples and questions for further research.
The following example shows that fundamental groups of lc singularities can be infinite. 
We also describe an expectation about the fundamental group of 
lc cone singularities.

\begin{example}\label{ex:cone-over-smooth-CY}
{\rm 
The simplest way to construct singularities, 
both algebraically and topologically, 
is by taking cones over complete geometric objects. 
It is well-known that cones over smooth Fano varieties with respect to the polarization $-K_X$ lead to 
isolated klt singularities.
These singularities have finite fundamental groups~\cite[Theorem 1]{Bra21}. 
Analogously, the cone over a smooth Calabi--Yau variety, with respect to any polarization, gives a log canonical singularity.

For instance, we can consider an Abelian variety $A$ of dimension $n$.
Let $P_A$ be a very ample divisor on $A$.
We can consider the affine cone
\[
C(A,P_A):={\rm Spec}\left( 
\bigoplus_{m\geq 0} 
H^0(A,\mathcal{O}_A(mP_A))
\right).
\]
The singularity 
$(C(A,P_A);c)$ is log canonical, where
$c$ is the vertex point (see, e.g.,~\cite{Kol13}).
For the fundamental group of the cone, 
there is an exact sequence 
\[
1\rightarrow 
\zz 
\rightarrow 
\pi_1^{\rm loc}(C(A,P_A);c)
\rightarrow 
\zz^{2n}
\rightarrow 
1,
\]
where $\zz$ is generated by the loop $\gamma_E$ 
around the exceptional divisor of the blow-up of $c\in C(A,P_A)$.
Thus, the image of $\zz$ in $\pi_1^{\rm loc}(C(A,P_A);c)$ defines a central element.
Indeed, we can see that this element is central as the normal bundle of $E$ on $Y$ trivializes on some open subset of $E$.
This implies that 
$\pi_1^{\rm loc}(C(A,P_A;c))$ is a nilpotent group of nilpotency order $2$ with a nilpotent basis of rank $2n+1$.
The fundamental group
of a smooth Calabi--Yau variety
of dimension $n$ 
is virtually nilpotent with 
a nilpotent basis of length at most $2n$ (see, e.g.,~\cite[Corollary 2]{KW11}).
Thus, the process of taking cones over smooth Calabi--Yau varieties 
should lead to log canonical singularities
whose local fundamental groups
are virtually nilpotent with a nilpotent basis of length at most $2n+1$.
The case of cones over Abelian varieties should be the worse one, in terms of the length of the nilpotent basis.

We remark that even for log Calabi--Yau pairs of dimension $n$ the regional fundamental group
is expected to be virtually nilpotent
with a nilpotent basis of rank at most $2n$.
In summary, the local fundamental groups 
of log canonical cone singularities
can be infinite (unlike log terminal singularities). 
However, the fundamental groups
of these examples are still somehow controlled as nilpotent groups. 
Thus, these fundamental groups are still close to abelian groups.
}
\end{example} 

The following example shows that fundamental groups
of log canonical singularity
may be infinite and not nilpotent.

\begin{example}\label{ex:cone-over-surf-group}
{\em 
Let $S$ be a Riemann surface of genus $g$.
In Theorem~\ref{introthm:surf-group-in-3}, 
we show that there exists 
an isolated lc $3$-fold singularity
$(X_S;x)\simeq \pi_1(S)$. 
If $g\geq 2$, then the center of $\pi_1(X_S;x)$ is trivial.
Thus, the central sequence for
$\pi_1(X_S;x)$ terminates with the trivial subgroup. 
This shows that the expectation in Example~\ref{ex:cone-over-smooth-CY}
does not hold if the dual complex of the singularity is higher-dimensional, i.e., if the regularity is positive.
In these examples, the dlt modification 
$(Y_S,E_S)$ of $(X_S;x)$ satisfies
that $\mathcal{D}(E_S)$ is homotopic
to the Riemann surface $S$.
Moreover, we have that $\pi_1(E_S)\simeq \pi_1(S)$.
}
\end{example}

The following example shows that the fundamental group of an lc $3$-fold singularity can be interesting even 
if the associated dual complex is a $2$-sphere.

\begin{example}\label{ex:spherical-dc}
{\em 
Let $E$ be the torus invariant boundary of $\pp^{n+1}$.
Then, $E$ is a simple normal crossing Calabi--Yau variety, i.e., 
it has snc singularities
and $K_E\sim 0$.
Furthermore, $E$ is obtained by gluing $n+1$ copies of $\pp^n$ forming an $(n+1)$-simplex.
Thus, the dual complex $\mathcal{D}(E)$ is an $n$-sphere.
Consider the polarization $\mathcal{L}_{E,m}$
that restricts to the line bundle
$\mathcal{O}_{\pp^n}(mH)$ on each irreducible component $E_i$ isomorphic to $\pp^n$.
By Subsection~\ref{subsec:from-poly-to-toric} and Subsection~\ref{subsec:from-toric-to-lc}, we know that there exists an lc  singularity $(X_{n,m};x)$ of dimension $(n+1)$
with a dlt modification $(Y,E)$ and $E$ having normal bundle $\mathcal{L}_{E,m}^\vee$.
This construction depends on the choice of certain very ample divisors. In this case, it suffices to take $m> n+1$.
Let $L_i\subset E_i$ be a general line,
Hence, we have that 
$E_j\cdot L_i=1$ for $j\neq i$
and $E_i\cdot L_i=m-n-1$.
Let $\gamma_i$ be the loop around $E_i$. 
Then, all the loops $\gamma_i$ commute.
Furthermore, we have the relations
\[
\gamma_i^{m-n-1}=\prod_{j\neq i} \gamma_j.
\]
From these relations, one can deduce that 
$\pi_1(X_{n,m};x)$ is a finite abelian group
of rank at most $n$. 

For instance, we can consider $n=3$
and choose the polarization 
$\mathcal{L}_{E,5}$
on the $2$-dimensional snc Calabi--Yau variety $E$.
Then, we obtain the generator's relations 
\[
\gamma_1^{-2}\gamma_2\gamma_3\gamma_4,
\gamma_1\gamma_2^{-2}\gamma_3\gamma_4,
\gamma_1\gamma_2\gamma_3^{-2}\gamma_4,\text{ and }
\gamma_1\gamma_2\gamma_3\gamma_4^{-2}.
\]
We conclude that
\[
\pi_1(X_{2,5};x) \simeq (\zz/3\zz)^3.
\]
This example is interesting as it is closely related to toric singularities.
The regional fundamental group of an $n$-dimensional toric pair is a finite abelian group
of rank at most $n$.
In this example, we have an $n$-dimensional isolated lc singularity
with spherical dual complex
and fundamental group
a finite abelian group of rank $n$.
}
\end{example}

The following example shows that all the possible ranks in Theorem~\ref{introthm:2-dim-lcy} can happen.

\begin{example}\label{ex:possible-ranks}
{\em 
Consider the following log Calabi--Yau surfaces:
\begin{itemize}
    \item A smooth K3 surface,
    \item the pair $(\pp^2,L+C)$,
    where $L$ is a line and $C$ a transversal conic, 
    \item the pair $(\pp^2,L_1+L_2+L_3)$,
    where the $L_i$'s are transversal lines,
    \item the product $E\times (\pp^1,\{0\}+\{\infty\})$, where $E$ is an elliptic curve, and 
    \item an abelian surface.
\end{itemize}
Then, the rank of $\pi_1^{\rm reg}(X,B)$ in the previous examples are $\{0,1,2,3,4\}$, respectively.
}
\end{example}

We finish this section with a couple of questions. 
In Table~\ref{table1}, we have a complete description of all the possible isomorphism classes of regional fundamental groups of lc surface singularities.
Theorem~\ref{introthm:surf-group-in-3}
and Theorem~\ref{introthm:not-free-in-3}
give some new examples 
and restrictions for the fundamental groups in dimension $3$.
However, the full picture in dimension $3$ is still not clear.
An answer to the following question would enhance our understanding of the fundamental groups in dimension $3$.

\begin{question}\label{quest:3-dim-extension}
Let $G$ be a finite cyclic extension of a surface group.
Does there exist a $3$-fold lc singularity $(X;x)$ for which $\pi_1^{\rm loc}(X;x)\simeq G$?
\end{question}

The methods described in this paper can be exploited to produce 
lc singularities with smooth dual complexes. These examples are very interesting. However, we lack the machinery to produce singular dual complexes yet. 
To do so, there is a natural thing to try:
study smooth polyhedral complexes with finite actions
and try to realize the polyhedral quotient as a dual complex.

\begin{question}
Is it possible to perform an equivariant version of polyhedral complexes
to construct lc singularities
with singular dual complexes? 
\end{question}

Still, this construction would only lead to geometric orbifolds as dual complexes. 
One would need to develop slightly different machinery to obtain arbitrary orbifolds. 
Constructing examples of $4$-fold lc singularities
with singular dual complexes is harder.
However, it is fairly easy to construct log Calabi--Yau $4$-folds with singular dual complex.
This can happen as quotients
$(T,B_T)/G$ of a $4$-dimensional toric Calabi--Yau pair $(T,B_T)$ that admits a finite subgroup $G<{\rm Aut}(T,B_T)$.

Our main theorem points in the direction 
that any finitely presented fundamental group could appear as the local fundamental group
of an lc singularity.
We do not know yet the existence of a finitely presented group
that does not appear
as the fundamental group of a $4$-dimensional lc singularity.

\begin{question}
Does there exist a finitely presented group that it is not the fundamental group
of a $4$-dimensional lc singularity?
\end{question}

We expect the answer to the previous question to be yes.
However, at the same time, we expect that every finitely presented group appears in dimension $5$. This still leaves open the question about fundamental groups of rational log canonical singularities.
These singularities are expected to behave much more like a klt singularity.
This leads to the following questions.

\begin{question}
Is there any restriction for the fundamental group of a rational lc singularity? 
\end{question}

As we discussed several times throughout the article, these fundamental groups are closely related to the fundamental group of the underlying dual complex.
Thus, to obtain interesting examples for the previous question one needs to consider $5$-dimensional singularities.
Indeed, the fundamental groups of homology spheres of dimension $3$ are more restrictive.
The machinery introduced in this article should allow us to tackle these questions.

\appendix

\section{Fundamental groups of lc surface singularities}
\label{sec:fun-lc-surf}

The following tables summarize the possible fundamental groups of log canonical singularities of surfaces. It follows from Proposition~\ref{prop:caso-eliptic}, Proposition~\ref{prop:caso-cycle-rational}, the proof of Proposition~\ref{prop:B_s=0}, and the proof of Proposition~\ref{prop:caso-chain-rational}.
In the first column, we describe the exceptional divisor of the minimal resolution.
In the second column, we write down the coefficients of the standard approximation of the boundary divisor and describe the intersection of its strict transform with $E$.
In the third column, we describe the isomorphism class of the regional fundamental group.\\

\begin{center}
\captionof{table}{Fundamental groups of lc surface singularities}
\label{table1}
\begin{longtable}[H]{ | m{11em} | m{14em} | c | } 
\hline
Exceptional divisor $(E)$ & Boundary $(B_s)$ & $\pi_1^{\rm reg}(X,B;x)$ \\ 
 \hline \hline 
 
Elliptic curve & $B_s=0$  & \ $\zz \rtimes \zz^2$\\

\hline
 
Cycle of rational curves & $B_s=0$ & $\zz^2 \rtimes \zz$ \\
 
\hline
 
A rational curve intersected by $4$ other $(-2)$-curves & $B_s=0$ &  $\langle a,b,c \mid a^{2}b^{-2},a^{2}c^{-2}, a^{2}(a^{2m-1}b^{-1}c^{-1})^{-2}\rangle$ \\ 
\hline 
A rational curve intersected by 3 chains of rational curves & $B_s=0$  & $\langle a,b,c \mid a^{A}b^{-B},a^{A}c^{-C},a^{A'}b^{B'}c^{C'} \rangle$ \\ 
\hline
Chain of rational curves & $B_s=0$  & $\zz /n\zz$ \\
\hline
 
A chain of rational curves intersected by $2$ other $(-2)$ curves in one end & $B_s=0$ & $\langle a,b \mid a^2b^{-2},a^{A}(ab)^{B} \rangle$ \\

\hline 
 
A chain of rational curves intersected by $2$ other $(-2)$ curves in each end & $B_s=0$ & 

{$\Bigl\langle 
       \begin{array}{l|}
 a,b,c                                          
        \end{array}
        \begin{array}{c}
             a^2b^{-2},a^{A}(ab)^{B}c^{-2},  \\
             c^{2}(a^{A'}(ab)^{B'}c^{C'})^{-2}  
        \end{array}
     \Bigr\rangle$}
 \\

\hline 
 
$E=0$ & $B_s=\frac{1}{2}B_1+\frac{1}{2}B_2$ & $\zz / 2 \zz \times \zz / 2\zz $ \\
\hline

$E=0$ & $B_s=\frac{m_1-1}{m_1}B_1$ &  $\zz / m_1 \zz$\\
\hline 

A rational curve intersected by $3$ other $(-2)$ curves & $B_s=\frac{1}{2}B_1$ & $\langle a,b,c \mid a^2b^{-2},a^2c^{-2},(a^{1-2m}bc)^2\rangle$\\

\hline

A rational curve intersected by $2$ other $(-2)$ curves & $B_s=\frac{1}{2}B_1+\frac{1}{2}B_2$ &  $\langle a,b,c \mid a^2b^{-2},c^2,[a^2,c],(a^{1-2m}bc)^2\rangle$ \\

\hline

A rational curve intersected by $1$ other $(-2)$ curves & $B_s=\frac{1}{2}B_1+\frac{1}{2}B_2+ \frac{1}{2}B_3$ & $\langle a,b,c \mid b^2,c^2,[a^2,b],[a^2,c],(a^{1-2m}bc)^2\rangle$ \\

\hline

Rational curve & $B_s=\frac{m_1-1}{m_1}B_1+\frac{m_2-1}{m_2}B_2+\frac{m_3-1}{m_3}B_3$ & 

{$\Bigl\langle 
       \begin{array}{l|}
 a,b,x                                          
        \end{array}
        \begin{array}{c}
            a^{m_1},b^{m_2},[a,x],[b,x], \\(b^{-1}a^{-1}x^{m})^{m_3}
            
        \end{array}
     \Bigr\rangle$}

\\

\hline 

  Chain of rational curves & $B_s=\frac{m_1-1}{m_1}B_1$, not intersecting $E$ in an end curve & 

{$\Bigl\langle 
       \begin{array}{l|}
 a,b,c,x                                          
        \end{array}
        \begin{array}{c}
             a^{m_1},[a,x],xb^{-B},\\xc^{-C},ab^{B'}c^{C'}x^{-m}
        \end{array}
     \Bigr\rangle$}
\\

\hline

Chain of rational curves, where the last curve is a $(-2)$-curve & $B_s=\frac{1}{2}B_1$, intersecting $E$ in the second to last curve & $\langle a,b,x \mid a^2, xb^{-2},[a,x],(ab)^{A}x^{B}\rangle$\\

\hline

Chain of rational curves, where each end curve is a $(-2)$-curve  & $B_s=\frac{1}{2}B_1+\frac{1}{2}B_2$, $B_1$ intersecting $E$ in the second curve and $B_2$ intersecting $E$ in the second to last curve & 

{$\Bigl\langle 
       \begin{array}{l}
 a,b,c                                          
        \end{array}
        \begin{array}{|c}
            b^2,[a^2,b],a^{2A}(ab)^{B}c^{-2} ,\\ (a^{2A'}(ab)^{B'}c^{-2C'+1})^2
        \end{array}
     \Bigr\rangle$}\\

\hline

Chain of rational curves, where the last curve is a $(-2)$-curve & $B_s=\frac{1}{2}B_1+\frac{1}{2}B_2+\frac{1}{2}B_3$, 

$B_1$ intersecting $E$ in the second to last curve and $B_2$, $B_3$ intersecting the first curve. &

{$\Biggl\langle 
       \begin{array}{c}
 a,b,\\ c                                          
        \end{array}
        \begin{array}{|c}
             b^2, [a^2,b],c^2,[a^{2A}(ab)^{B},c],\\ (a^{2A'}(ab)^{B'}(a^{2A}(ab)^{B})^{-C}c)^2
        \end{array}
     \Biggr\rangle$}\\

\hline

Rational curve & $B_s=\frac{1}{2}B_1+\frac{1}{2}B_2+ \frac{1}{2}B_3+ \frac{1}{2}B_4$ &

{$\Bigl\langle 
       \begin{array}{l|}
 a,b,c,x                                          
        \end{array}
        \begin{array}{c}
         [a,x],[b,x], [c,x] \\
 a^2,b^2,c^2, (abcx^{m})^{2}
        \end{array}
     \Bigr\rangle$}\\

\hline

Chain of rational curves  & $B_s=\frac{1}{2}B_1+\frac{1}{2}B_2+\frac{1}{2}B_3+\frac{1}{2}B_4$

$B_1$ and $B_2$ intersecting $E$ in an end curve and $B_3$ and $B_4$ intersecting $E$ in the other end curve. &

{$\Biggl\langle 
       \begin{array}{c|}
 a,b,\\ c,x                                          
        \end{array}
        \begin{array}{c}
            [c,(ab)^{A}x^{B}], [b,x], \\a^{2}, b^{2},c^2,[a,x],\\ ((ab)^{A'}x^{B'}c)^{2}\\ 
        \end{array}
     \Biggr\rangle$}\\

\hline

Chain of rational curves  & $B=\frac{m_1-1}{m_1}B_1$, intersecting $E$ in an end curve & $\langle 
  a,x \mid  a^{m_1},[a,x],(a)^{A}x^{B}
 \rangle$\\

\hline

Chain of rational curves  & $B_s=\frac{m_1-1}{m_1}B_1+\frac{m_2-1}{m_2}B_2$, each $B_i$ intersecting $E$ in a different end curve & $\langle 
  a,x \mid  a^{m_1},[a,x], ((a)^{A}x^{B})^{m_2}
 \rangle$ \\
\hline

Chain of rational curves & $B_s=\frac{m_1-1}{m_1}B_1+\frac{1}{2}B_2+\frac{1}{2}B_3$

$B_1$ intersecting $E$ in an end curve and $B_2$ and $B_3$ intersecting $E$ in the other end curve & $\langle a,b,x \mid a^2,[a,x],b^2,[b,x], ((ab)^{A}x^{B})^{m_1}\rangle$ \\

\hline
\end{longtable}
\end{center}

\bibliographystyle{habbvr}
\bibliography{bib}
\end{document}